\numberwithin{figure}{section}
\newtheorem{theorem}{Theorem}[section]
\newtheorem{lemma}{Lemma}[section]
\newtheorem{proposition}[lemma]{Proposition}
\newtheorem{definition}[lemma]{Definition}
\newtheorem{remark}[lemma]{Remark}
\newtheorem{problem}{{\bf{Problem}}}[section]
\numberwithin{equation}{section}
\begin{document}

\title[Contact discontinuities for 3-D axisymmetric flows]{Contact discontinuities for 3-D axisymmetric inviscid compressible flows in infinitely long cylinders}

\author{Myoungjean Bae}
\address{Department of Mathematics, POSTECH, 77 Cheongam-Ro, Nam-Gu, Pohang, Gyeongbuk, Korea 37673; Korea Institute for Advanced Study, 85 Hoegiro, Dongdaemun-Gu, Seoul 02455, Republic of Korea}
\email{mjbae@postech.ac.kr}

\author{Hyangdong Park}
\address{Department of Mathematics, POSTECH, 77 Cheongam-Ro, Nam-Gu, Pohang, Gyeongbuk, Korea 37673}
\email{hyangdong.park@postech.ac.kr}

\keywords{angular momentum density, asymptotic state, axisymmetric,  contact discontinuity, free boundary problem, Helmholtz decomposition, infinite cylinder, steady Euler system, subsonic, vorticity}

\subjclass[2010]{
35J47, 35J57, 35J66, 35Q31, 35R35, 74J40, 76N10}

\date{\today}

\begin{abstract}
We prove the existence of a subsonic axisymmetric weak solution $({\bf u},\rho,p)$ with ${\bf u}=u_x{\bf e}_x+u_r{\bf e}_r+u_\theta{\bf e}_{\theta}$ to steady Euler system in a three-dimensional infinitely long cylinder $\mathcal{N}$ when prescribing the values of the entropy $(=\frac{p}{\rho^{\gamma}})$ and angular momentum density $(=ru_{\theta})$ at the entrance by piecewise $C^2$ functions with a discontinuity on a curve on the entrance of $\mathcal{N}$. Due to the variable entropy and angular momentum density (=swirl) conditions with a discontinuity at the entrance, the corresponding solution has a nonzero vorticity, nonzero swirl, and contains a contact discontinuity $r=g_D(x)$. We construct such a solution via Helmholtz decomposition. The key step is to decompose the Rankine-Hugoniot conditions on the contact discontinuity via Helmholtz decomposition so that the compactness of approximated solutions can be achieved. Then we apply the method of iteration to obtain a solution and analyze the asymptotic behavior of the solution at far field.
\end{abstract}
\maketitle


\section{Introduction}
In $\mathbb{R}^3$, the steady flow of inviscid compressible gas is governed by {\emph{the Euler system}} \cite{courant1999supersonic}:
\begin{equation}\label{E-System-1}
\left\{
\begin{split}
&\mbox{div}(\rho{\bf u})=0,\\
&\mbox{div}(\rho{\bf u}\otimes{\bf u}+p\,{\mathbb I}_3)=0\quad({\mathbb I}_3: \mbox{$3\times 3$ identity matrix}),\\
&\mbox{div}\left(\rho\Bigl(E+\frac{p}{\rho}\Bigr){\bf u}\right)=0.
\end{split}
\right.
\end{equation}
In \eqref{E-System-1}, the functions $\rho=\rho({\bf x})$, ${\bf u}=(u_1{\bf e}_1+u_2{\bf e}_2+u_3{\bf e}_3)({\bf x})$, $p=p({\bf x})$, and $E=E({\bf x})$ represent the density, velocity, pressure, and the total energy density of the flow, respectively, at ${\bf x}=(x_1, x_2, x_3)\in \mathbb{R}^3$. In this paper, we consider an {\emph{ideal polytropic gas}} for which $E$ is given by
\begin{equation}
\label{definition-energy}
E=\frac 12|{\bf u}|^2+\frac{p}{(\gamma-1)\rho}
\end{equation}
for a constant $\gamma>1$, called the {\emph{adiabatic exponent}}. With the aid of \eqref{definition-energy}, the system \eqref{E-System-1} is closed, and can be rewritten as
\begin{equation}\label{E-System}
\left\{
\begin{split}
&\mbox{div}(\rho{\bf u})=0,\\
&\mbox{div}(\rho{\bf u}\otimes{\bf u}+p\,{\mathbb I}_3)=0,\\
&\mbox{div}(\rho{\bf u}B)=0,
\end{split}
\right.
\end{equation}
for the {\emph{Bernoulli invariant}} $B$ given by
\begin{equation}\label{Ber-inv}
B=\frac{1}{2}|{\bf u}|^2+\frac{\gamma p}{(\gamma-1)\rho}=\frac{1}{2}|{\bf u}|^2+\frac{\gamma}{\gamma-1}S\rho^{\gamma-1}.
\end{equation}
Here, $S=p/\rho^{\gamma}$ denotes the entropy.

Let $\Omega\subset \mathbb{R}^3$ be an open and connected set. Suppose that a non-self-intersecting $C^1$-surface $\Gamma$ divides $\Omega$ into two disjoint open subsets $\Omega^{\pm}$ such that $\Omega=\Omega^-\cup\Gamma\cup \Omega^+$.
Suppose that ${\bf U}=({\bf u},\rho,p)$ satisfies the following properties:
\begin{itemize}
\item[$(w_1)$] ${\bf U}\in [L^{\infty}_{\rm loc}(\Omega)\cap C^1_{\rm loc}(\Omega^{\pm})\cap C^0_{\rm loc}(\Omega^{\pm}\cup \Gamma)]^5$;
\item[$(w_2)$] For any $\xi\in C_0^{\infty}(\Omega)$ and $k=1,2,3$,
\begin{equation*}
\int_{\Omega}\rho{\bf u}\cdot \nabla\xi\,d{\bf x}=\int_{\Omega}(\rho u_k{\bf u}+p{\bf e}_k)\cdot \nabla\xi\, d{\bf x}=\int_{\Omega}\rho{\bf u}B\cdot \nabla\xi \,d{\bf x}=0.
\end{equation*}
Here, ${\bf e}_k$ is the unit vector in the $x_k$-direction.
\end{itemize}

By integration by parts, one can directly check that ${\bf U}$ satisfies the properties $(w_1)$ and $(w_2)$ if and only if
\begin{itemize}
\item[$(w_1^*)$] ${\bf U}$ satisfies the property $(w_1)$;
\item[$(w_2^*)$] ${\bf U}$ is a classical solution to \eqref{E-System} in $\Omega^{\pm}$, and satisfies the Rankine-Hugoniot conditions
\begin{eqnarray}
\label{R-H-1}&&[\rho{\bf u}\cdot{\bf n}]_{\Gamma}=[\rho{\bf u}\cdot{\bf n}B]_{\Gamma}=0,\\
\label{R-H-2}&&[\rho({\bf u}\cdot{\bf n}){\bf u}+p{\bf n}]_{\Gamma}={\bf 0},
\end{eqnarray}
for a unit normal vector field ${\bf n}$ on $\Gamma$, where $[F]_{\Gamma}$ is defined by
$$[F({\bf x})]_{\Gamma}:=\left.F({\bf x})\right|_{\overline{\Omega^-}}-\left.F({\bf x})\right|_{\overline{\Omega^+}}\quad\mbox{for}\quad {\bf x}\in\Gamma.$$
\end{itemize}

Let ${\bm \tau}_1$ and ${\bm \tau}_2$ be tangent vector fields on $\Gamma$ such that they are linearly independent at each point on $\Gamma$.
 Due to $[\rho{\bf u}\cdot{\bf n}]_{\Gamma}=0$ in \eqref{R-H-1}, the condition \eqref{R-H-2} can be rewritten as
\begin{equation}\label{R-H-3}
[\rho({\bf u}\cdot{\bf n})^2+p]_{\Gamma}=0,\quad\rho({\bf u}\cdot{\bf n})[{\bf u}\cdot{\bm \tau}_k]_{\Gamma}=0\quad\text{for $k=1,2$.}
\end{equation}

Suppose that $\rho>0$ in $\Omega$. Then, the second condition in \eqref{R-H-3} holds if either ${\bf u}\cdot{\bf n}=0$ holds on $\Gamma$, or $[{\bf u}\cdot{\bm \tau}_k]_{\Gamma}=0$ for all $k=1,2$.

\begin{definition}
\label{definition-wsol}

We define ${\bf U}=({\bf u}, \rho, p)$ to be a weak solution to \eqref{E-System} in $\Omega$ with a {\emph{contact discontinuity $\Gamma$}} if the following properties hold:
\begin{itemize}
\item[(i)] $\Gamma$ is a non-self-intersecting $C^1$-surface dividing $\Omega$ into two open subsets $\Omega^{\pm}$ such that $\Omega=\Omega^+\cup\Gamma\cup \Omega^-$;

\item[(ii)] ${\bf U}$ satisfies $(w_1)$ and $(w_2)$, or equivalently
$(w_1^*)$ and $(w_2^*)$;

\item[(iii)] $\rho>0$ in $\overline{\Omega}$;

\item[(iv)] $\left({\bf u}|_{\overline{\Omega^-}\cap \Gamma}-{\bf u}|_{\overline{\Omega^+}\cap \Gamma}\right)({\bf x})\neq {\bf 0}$ holds for all ${\bf x}\in \Gamma$;

\item[(v)] ${\bf u}\cdot{\bf n}|_{\overline{\Omega^-}\cap \Gamma}={\bf u}\cdot{\bf n}|_{\overline{\Omega^+}\cap \Gamma}=0$, where ${\bf n}$ is a unit normal vector field on $\Gamma$.

\end{itemize}
\end{definition}

One can directly check from \eqref{R-H-1} and \eqref{R-H-3} that ${\bf U}=({\bf u}, \rho, p)$ is a weak solution to \eqref{E-System} in $\Omega$ with a contact discontinuity $\Gamma$ if and only if the following properties hold:
\begin{itemize}
\item[$(i')$] The properties (i)-(iv) stated in Definition \ref{definition-wsol} hold;
\item[$(ii')$] $[p]_{\Gamma}=0$ and  ${\bf u}\cdot{\bf n}=0$ on $\Gamma$.
\end{itemize}
\begin{figure}[!h]
\centering
\includegraphics[scale=0.7]{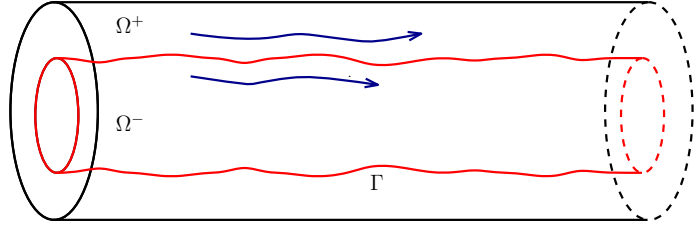}
\caption{Contact discontinuity}
\end{figure}

Let $(x,r,\theta)$ be the cylindrical coordinates of $(x_1,x_2, x_3)\in\mathbb{R}^3$, that is,
$$(x_1,x_2,x_3)=(x,r\cos\theta,r\sin\theta),\quad r\ge0,\quad \theta\in\mathbb{T},$$
where $\mathbb{T}$ is a one dimensional torus with period $2\pi$.
Any function $f({\bf x})$ can be represented as $f({\bf x})=f(x,r,\theta)$, and a vector-valued function ${\bf F}({\bf x})$ can be represented as
 $${\bf F}({\bf x})=F_x(x,r,\theta){\bf e}_x+F_r(x,r,\theta){\bf e}_r+F_{\theta}(x,r,\theta){\bf e}_{\theta},$$ where
$${\bf e}_x=(1,0,0),\quad{\bf e}_r=(0,\cos\theta,\sin\theta),\quad{\bf e}_{\theta}=(0,-\sin\theta,\cos\theta).$$
\begin{definition}
\begin{itemize}
\item[(i)]
A function $f({\bf x})$ is axially symmetric (=axisymmetric) if its value is independent of $\theta$.
\item[(ii)] A vector-valued function ${\bf F}$ is axially symmetric (=axisymmetric) if each of functions $F_x({\bf x})$, $F_r({\bf x})$, and $F_{\theta}({\bf x})$ is axially symmetric.
\end{itemize}
\end{definition}

The goal of this paper is to prove the existence of subsonic axisymmetric weak solutions to \eqref{E-System} with contact discontinuities in the sense of Definition \ref{definition-wsol} in a three-dimensional infinitely long cylinder.
In particular, we seek a solution with nonzero vorticity and nonzero angular momentum (=swirl).
Furthermore, we analyze asymptotic behaviors of the contact discontinuities at far field.

There are many studies of smooth subsonic solutions to Euler system, see \cite{chen2016subsonic,chen2012global,du2011global,du2014steady,du2011subsonic,duan2013three,duan2015subsonic,xie2007global,xie2010existence,xie2010global} and references cited therein.
As far as we know, there are few results on the existence of solutions to Euler system with contact discontinuities \cite{bae2013stability,bae2018contact,chen2017steady,chen2013stability,chen2013well,chen2006stability,chen2008mach,chen2008stability,wang2015structural}.
In \cite{wang2015structural}, supersonic contact discontinuities in three-dimensional isentropic steady flows were studied.

In this paper, we prove the existence of a subsonic axisymmetric weak solution $({\bf u},\rho,p)$ with ${\bf u}=u_x{\bf e}_x+u_r{\bf e}_r+u_\theta{\bf e}_{\theta}$ to steady Euler system in a three-dimensional infinitely long cylinder $\mathcal{N}$ when prescribing the values of the entropy $(=\frac{p}{\rho^{\gamma}})$ and angular momentum density $(=ru_{\theta})$ at the entrance by piecewise $C^2$ functions with a discontinuity on a curve on the entrance of $\mathcal{N}$. Due to the variable entropy and angular momentum density (=swirl) conditions with a discontinuity at the entrance, the corresponding solution has a nonzero vorticity, nonzero swirl, and contains a contact discontinuity $r=g_D(x)$. We construct such a solution via Helmholtz decomposition.
By using Helmholtz decomposition, smooth subsonic solutions for the full Euler-Poisson system with nonzero vorticity were studied in \cite{bae2014subsonic,bae20183}.
To construct subsonic solutions with contact discontinuities, the challenge is to decompose the Rankine-Hugoniot conditions on contact discontinuities via Helmholtz decomposition so that the compactness of approximated solutions can be achieved.

The first work to construct subsonic weak solutions with contact discontinuities to steady Euler system via Helmholtz decomposition is given in \cite{bae2018contact}, in which new formulations of steady Euler system and Rankine-Hugoniot conditions via Helmholtz decomposition are introduced, and the existence of subsonic weak solutions with contact discontinuities and nonzero vorticity is proved  in a two-dimensional infinitely long nozzle. Furthermore, it is proved that a two dimensional weak solution converges to a constant pressure state at far-field($x=\infty$), if one side of the contact discontinuity has uniform state with $(p, {\bf u})=(p_0, {\bf 0})$ for a constant $p_0>0$. In this paper, we consider a three-dimensional infinitely long circular cylinder with the same assumption. Namely, we prescribe boundary condition at the entrance of the cylinder so that the resultant subsonic weak solution to steady Euler system contains a contact discontinuity, and its one side has uniform state with $(p, {\bf u})=(p_0, {\bf 0})$ for a constant $p_0>0$.
Differently from the two dimensional case, however, the three dimensional problem that we consider in this paper requires a more subtle approach.  If we seek a weak solution via Helmholtz decomposition  with a contact discontinuity so that its inner layer flow has nonzero vorticity and nonzero angular momentum, we first need to establish the unique solvability of a singular-coefficient elliptic equation, which concerns the angular component of the vorticity in its cylindrical-coordinate representation. Also, a careful treatment is needed in analysis of streamlines near the $x$-axis ($r = 0$).
To resolve these difficulties, we employ the method developed in \cite{bae20183}, but with more sophisticated computations to handle nonlinear boundary conditions on the contact discontinuity, which are derived from the Rankine-Hugoniot conditions.

To analyze the asymptotic behavior of the solution, we use the stream function formulation and energy estimates. We emphasize that the asymptotic behavior of three dimensional subsonic weak solution with a contact discontinuity is completely different from the two dimensional solution, which are studied in \cite{bae2018contact}. Due to the non-zero angular momentum generated by the boundary condition at the entrance, the asymptotic limit of pressure $p$ of three dimensional subsonic weak solution with a contact discontinuity does not converge to a constant $p_0$ at $x=\infty$. And, this is purely three dimensional phenomenon.
To our best knowledge, this is the first result on the three-dimensional subsonic flows to steady Euler system with contact discontinuities.

The rest of the paper is organized as follows.
In Section \ref{3D-sec-Main}, we formulate the main problem of this paper, and state its solvability (Theorem \ref{3D-MainThm}(a)) and the asymptotic limit of the solution (Theorem \ref{3D-MainThm}(b)) as the main theorem.
In Section \ref{3D-sec-Hel}, we reformulate the problem introduced in Section \ref{3D-sec-Main} by using the method of Helmholtz decomposition, and state its solvability as Theorem \ref{3D-Thm-HD}.
As we shall see later, the problems given in Section \ref{3D-sec-Main} and \ref{3D-sec-Hel} are free boundary problems in an unbounded domain.
To construct a solution to the free boundary problems in an unbounded domain, free boundary problems in cut-off domains will be formulated and solved in Section \ref{3D-sec-Cut}.
Based on the results of Section \ref{3D-sec-Cut}, we prove Theorem \ref{3D-Thm-HD} from which Theorem \ref{3D-MainThm}(a) follows.
Finally, the asymptotic behavior of the solution at far field is analyzed in Section \ref{3D-sec-ex}.

\section{Main Theorems}\label{3D-sec-Main}
We define an infinitely long cylinder
\begin{equation}\label{def-N-cyl}
\mathcal{N}:=\left\{(x_1,x_2,x_3)\in\mathbb{R}^3:\mbox{ }x_1>0,\mbox{ }\sqrt{x_2^2+x_3^2}<1\right\}.
\end{equation}
As we defined in the previous section,
let $(x,r,\theta)$ be the cylindrical coordinates of $(x_1,x_2, x_3)\in\mathbb{R}^3$, that is,
$$(x_1,x_2,x_3)=(x,r\cos\theta,r\sin\theta),\quad r\ge0,\quad \theta\in\mathbb{T},$$
where $\mathbb{T}$ is a one dimensional torus with period $2\pi$.
 Then, the wall $\Gamma_{\rm w}$ and the entrance $\Gamma_{\rm en}$ of $\mathcal{N}$ are defined as
\begin{equation*}
\Gamma_{\rm w}:=\partial\mathcal{N}\cap\{r=1\},\quad
\Gamma_{\rm en}:=\partial\mathcal{N}\cap\{x_1=0\}.
\end{equation*}
To prescribe a boundary condition which causes an occurrence of a contact discontinuity, we define an inner layer of the entrance $\Gamma_{\rm en}^-$ by
\begin{equation*}
\Gamma_{\rm en}^-:=\Gamma_{\rm en}\cap\left\{r\le \frac{1}{2}\right\}.
\end{equation*}

Let us consider two layers of flow in $\mathcal{N}$ separated by the cylindrical surface $r=\frac{1}{2}$ with satisfying the following properties:
\begin{itemize}
\item[(i)] For fixed $\rho_0^{\pm}>0$ and $u_0>0$,  the velocity and density of outer and inner layers are given by $(0,0,0),$ $\rho_0^+$ and $(u_0,0,0)$, $\rho_0^-$ respectively;
\item[(ii)] The pressure of both outer and inner layers is given by a constant $p_0>0$;
\item[(iii)] The outer and inner layers are subsonic flows, i.e.,
$${u_0}<c_0\quad\mbox{for the sound speed}\quad c_0=\sqrt{\frac{\gamma p_0}{\rho_0^-}}.$$
\end{itemize}
Then a piecewise constant vector
\begin{equation*}
U_0(x_1,x_2,x_3):=\left\{
\begin{split}
(0,0,0,\rho_0^+,p_0)\quad&\mbox{for}\quad r>\frac{1}{2},\\
(u_0,0,0,\rho_0^-,p_0)\quad&\mbox{for}\quad r <\frac{1}{2}
\end{split}\right.
\end{equation*}
 is a weak solution of the Euler system \eqref{E-System}  in $\mathcal{N}$ with a contact discontinuity $\mathcal{N}\cap \{r=\frac{1}{2}\}$.
In this case, the entropy $S_0$ and Bernoulli function $B_0$ are piecewise constant functions with
\begin{equation}\label{def-S0-B0}
\begin{split}
&S_0(x_1,x_2,x_3)=\left\{\begin{split}
	\frac{p_0}{(\rho_0^+)^{\gamma}}=:S_0^+\quad&\mbox{for}\quad\frac{1}{2}<r<1,\\
	\frac{p_0}{(\rho_0^-)^{\gamma}}=:S_0^-\quad&\mbox{for}\quad0\le r <\frac{1}{2},\end{split}\right.\\
& B_0(x_1,x_2,x_3)=\left\{\begin{split}
	\frac{\gamma p_0}{(\gamma-1)\rho_0^+}=:B_0^+\quad&\mbox{for}\quad\frac{1}{2}<r<1,\\
	\frac{1}{2}u_0^2+\frac{\gamma p_0}{(\gamma-1)\rho_0^-}=:B_0^-\quad&\mbox{for}\quad0\le r<\frac{1}{2}.\end{split}\right.
\end{split}
\end{equation}

\begin{figure}[!h]
\centering
\includegraphics[width=0.78\columnwidth]{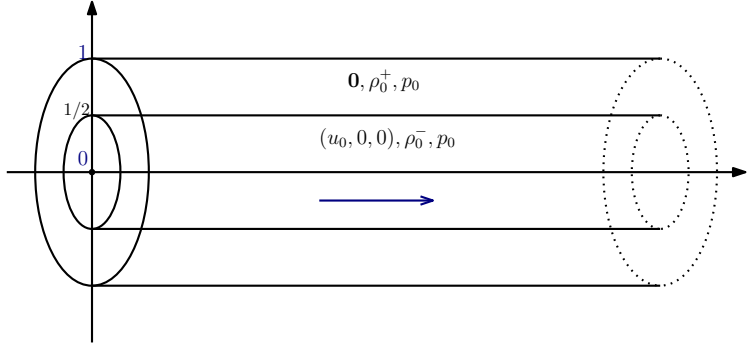}
\caption{Background state}
\end{figure}

Our main goal is to solve the following problem.
\begin{problem}\label{3D-Prob1}
Fix $\epsilon\in(0,1/10)$ and $\alpha\in(0,1)$.
For given radial functions
$S_{\rm en}(r)$, $\nu_{\rm en}(r)$ and $u_r^{\rm en}(r)$, define
\begin{equation}
\label{definition-sigma}
\sigma(S_{\rm en}, \nu_{\rm en}, u_r^{\rm en}):=\|S_{\rm en}-S_0\|_{2,\alpha,\Gamma_{\rm en}^-}+\|\nu_{\rm en}\|_{2,\alpha,\Gamma_{\rm en}^-}+\|u_r^{\rm en}\|_{1,\alpha,\Gamma_{\rm en}^-}.
\end{equation}
Assume that
\begin{equation}\label{def-entrance-ep}
\begin{split}
(S_{\rm en},\nu_{\rm en})\equiv(S_0^+,0)\quad&\mbox{on}\quad \Gamma_{\rm en}\setminus\Gamma_{\rm en}^-=\Gamma_{\rm en}
\cap\left\{ r\ge \frac{1}{2}\right\},\\
u_r^{\rm en}\equiv0\quad&\mbox{on}\quad \Gamma_{\rm en}
\cap\left\{ r\ge \frac{1}{2}-\epsilon\right\},
\end{split}
\end{equation}
and
\begin{equation}\label{3D-rem11}
\sigma(S_{\rm en}, \nu_{\rm en}, u_r^{\rm en})\le \sigma_0
\end{equation}
with sufficiently small $\sigma_0>0$ to be specified later.

Find a weak solution $U=({\bf u},\rho,p)$ to \eqref{E-System} with a contact discontinuity
$$\Gamma_{g_D}:r=g_D(x_1)$$
in the sense of Definition \ref{definition-wsol} in $\mathcal{N}$ such that
\begin{itemize}
\item[(a)] $g_D(0)=\frac{1}{2}$.
\item[(b)] Subsonicity: $${|{\bf u}|}<c\quad\mbox{for the sound speed}\quad c=\sqrt{\frac{\gamma p}{\rho}}\quad\mbox{in}\quad\overline{\mathcal{N}}.$$
\item[(c)] Positivity of density: $\rho>0$ {in} $\overline{\mathcal{N}}.$
\item[(d)] At the entrance $\Gamma_{\rm en}$, $U$ satisfies the boundary conditions:
\begin{equation*}\label{3D-BC-ent1}
\begin{split}
\frac{p}{\rho^{\gamma}}=S_{\rm en},\quad {\bf u}\cdot{\bf e}_{\theta}=\nu_{\rm en},\quad{\bf u}\cdot{\bf e}_r=u_r^{\rm en}\quad\mbox{on}\quad \Gamma_{\rm en}.
\end{split}
\end{equation*}
\item[(e)] On $\Gamma_{g_D}$,
$U$ satisfies the Rankine-Hugoniot conditions, i.e.,
\begin{equation*}\label{3D-BC-Cont1}
 [p]_{\Gamma_{g_D}}=0,\quad{\bf u}\cdot {\bf n}_{g_D}=0\quad\mbox{on}\quad \Gamma_{g_D},
\end{equation*}
where ${\bf n}_{g_D}$ denotes a unit normal vector field on $\Gamma_{g_D}$.
\item[(f)] On the wall $\Gamma_{\rm w}$,
$U$ satisfies the slip boundary condition, i.e.,
$${\bf u}\cdot{\bf e}_r=0\quad\mbox{on}\quad\Gamma_{\rm w}.$$
\item[(g)] The Bernoulli function $B$ is a piecewise constant function,
\begin{equation*}
B(x_1,x_2,x_3)=\left\{\begin{split}
&B_0^+\quad\mbox{for}\quad r>g_D(x_1),\\
&B_0^-\quad\mbox{for}\quad r<g_D(x_1),\\
\end{split}\right.
\end{equation*}
where $B_0^{\pm}$ are given by \eqref{def-S0-B0}.
\end{itemize}
\end{problem}

\begin{remark}[Compatibility conditions] \label{Rem1}If an axisymmetric vector field
$${\bf V}({\bf x})=V_x(x,r){\bf e}_x+V_r(x,r){\bf e}_r+V_{\theta}(x,r){\bf e}_{\theta}\quad\mbox{is }C^1,$$
then it must satisfy
$$V_r(x,0)=V_{\theta}(x,0)\equiv 0.$$
Since it is assumed in \eqref{3D-rem11} that the axisymmetric functions $(S_{\rm en},\nu_{\rm en})(r)$ are $C^1$ on $\Gamma_{\rm en}$, the compatibility conditions
\begin{equation}
\label{natural-comp-cond}
\partial_r(S_{\rm en},\nu_{\rm en})(0)={\bf 0}
\end{equation}
are naturally imposed.
\end{remark}

One can easily see that ${\bf u}={\bf 0}$, $\rho=\rho_0^+$, $p=p_0$ satisfy the following properties:
\begin{itemize}
\item[(i)] (Subsonicity) $|{\bf u}|=0<\sqrt{\gamma p/\rho}=\sqrt{(\gamma p_0)/\rho_0^+};$
\item[(ii)] (Positivity of density) $\rho_0^+>0$;
\item[(iii)] As in \eqref{def-S0-B0},
$$\frac{p_0}{(\rho_0^+)^{\gamma}}=S_0^+,\quad \frac{\gamma p_0}{(\gamma-1)\rho_0^+}=B_0^+;$$
\item[(iv)] ${\bf u}\cdot{\bf v}=0$ for any vector ${\bf v}\in\mathbb{R}^3$.
\end{itemize}
From this observation, we fix
${\bf u}={\bf 0}$, $\rho=\rho_0^+$, $p=p_0$ in $\mathcal{N}\cap\{r>g_D(x)\}$,
and we solve the following free boundary problem to find a solution to Problem \ref{3D-Prob1};
\begin{figure}[!h]
\centering
\includegraphics[width=0.85\columnwidth]{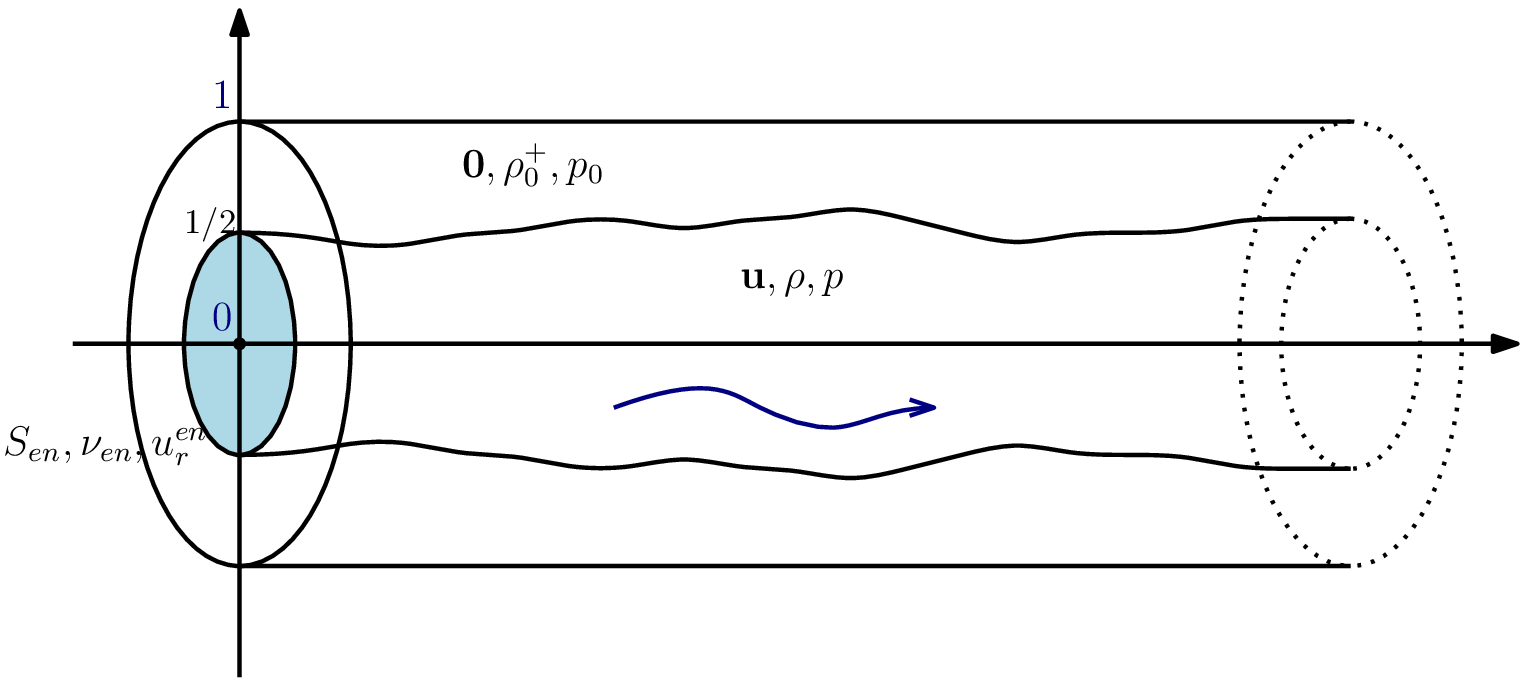}
\caption{Problem~\ref{3D-Problem2}}
\end{figure}

\begin{problem}\label{3D-Problem2}
Under the same assumptions of Problem \ref{3D-Prob1},
find $g_D:\mathbb{R}^+\longrightarrow (0,1)$ and a $C^1$ solution $U=({\bf u},\rho,p)$ to \eqref{E-System} in $\mathcal{N}_{g_D}^-:=\mathcal{N}\cap\{r<g_D(x)\}$ such that
\begin{itemize}
\item[(a)] \begin{equation}\label{GD0}
g_D(0)=\frac{1}{2}.
\end{equation}
\item[(b)] Subsonicity: $${|{\bf u}|}<c\quad\mbox{for the sound speed}\quad c=\sqrt{\frac{\gamma p}{\rho}}\quad\mbox{in}\quad\overline{\mathcal{N}^-_{g_D}}.$$
\item[(c)] Positivity of density: $\rho>0$ {in} $\overline{\mathcal{N}^-_{g_D}}.$
\item[(d)] At the entrance $\Gamma_{\rm en}^-$, $U$ satisfies the boundary conditions:
\begin{equation}\label{Prob2-BC-ent}
\begin{split}
\frac{p}{\rho^{\gamma}}=S_{\rm en},\quad {\bf u}\cdot{\bf e}_{\theta}=\nu_{\rm en},\quad{\bf u}\cdot{\bf e}_r=u_r^{\rm en}\quad\mbox{on}\quad \Gamma_{\rm en}^-.
\end{split}
\end{equation}

\item[(e)] On $\Gamma_{g_D}: r=g_D(x)$,
$U$ satisfies the boundary conditions
\begin{equation}\label{Prob2-BC-Cont}
 p=p_0,\quad{\bf u}\cdot {\bf n}_{g_D}=0\quad\mbox{on}\quad \Gamma_{g_D},
\end{equation}
where ${\bf n}_{g_D}$ denotes a unit normal vector field on $\Gamma_{g_D}$.
\item[(f)] The Bernoulli function $B$ is a constant function,
$$B(x_1,x_2,x_3)\equiv B_0^-\quad\mbox{in}\quad \overline{\mathcal{N}_{g_D}^-},$$
where $B_0^-$ is given by \eqref{def-S0-B0}.
\end{itemize}
\end{problem}

Since $p=S\rho^{\gamma}$, we can regard Problem \ref{3D-Problem2} as a problem for $({\bf u},p,S)$.
Assume that the smooth solution $({\bf u},\rho, S)$ of \eqref{E-System} is axially symmetric, i.e.,
$${\bf u}=u_x(x,r){\bf e}_x+u_r(x,r){\bf e}_r+u_{\theta}(x,r){\bf e}_{\theta},\quad
\rho=\rho(x,r),\quad  S=S(x,r).$$
Define the angular momentum density $\Lambda$ as follows
\begin{equation}
\label{definition-Lambda}
\Lambda(x,r):=ru_{\theta}(x,r).
\end{equation}
Then one can directly check that \eqref{E-System} is equivalent to the following system:
\begin{equation}\label{3D-ang}
\left\{\begin{split}
&\partial_x(\rho u_x)+\partial_r(\rho u_r)+\frac{\rho u_r}{r}=0,\\
&\rho(u_x\partial_x+u_r\partial_r)u_r-\frac{\rho u_{\theta}^2}{r}+\partial_r p=0,\\
&\rho(u_x\partial_x+u_r\partial_r)S=0,\\
&\rho(u_x\partial_x+u_r\partial_r)\Lambda=0.
\end{split}
\right.
\end{equation}

Now we state the main results in this paper.

\begin{theorem}\label{3D-MainThm}
For given radial functions
$S_{\rm en}(r)$, $\nu_{\rm en}(r)$ and $u_r^{\rm en}(r)$ on $\Gamma_{\rm en}$, assume that they satisfy \eqref{def-entrance-ep}, and let  $\sigma(S_{\rm en}, \nu_{\rm en}, u_r^{\rm en})$ be given by \eqref{definition-sigma}. For simplicity of notations, let $\sigma$ denote  $\sigma(S_{\rm en}, \nu_{\rm en}, u_r^{\rm en})$.
\begin{itemize}
\item[(a)](Existence)
 For any fixed $\alpha\in(0,1)$, there exists a small constant $\sigma_1>0$ depending only on $(u_0,\rho_0^-,p_0,S_0^-)$ and $\alpha$ so that if
\begin{equation*}
\sigma\le \sigma_1,
\end{equation*}
 then there exists an axially symmetric solution $U=({\bf u},\rho,p)$ of Problem \ref{3D-Problem2} with a contact discontinuity $r=g_D(x)$ satisfying \begin{equation}\label{Thm2.1-uniq-est}
\|g_D-\frac{1}{2}\|_{2,\alpha,\mathbb{R}^+}+\|({\bf u},\rho,p)-({\bf u}_0,\rho_0^-,p_0)\|_{1,\alpha,\mathcal{N}^-_{g_D}}\le C\sigma\quad\mbox{for}\,\,{\bf u}_0:=u_0{\bf e}_x,
\end{equation}
where the constant $C>0$ depends only on $(u_0,\rho_0^-,p_0,S_0^-)$ and $\alpha$.
\item[(b)](Asymptotic state)\label{3D-MainThm}
There exists a constant $\sigma_2\in(0,\sigma_1]$ depending only on $(u_0,\rho_0^-,p_0,S_0^-)$ and $\alpha$ so that if
$${\sigma}\le\sigma_2,$$
then the solution $U=({\bf u},\rho,p)$ in (a) satisfies
\begin{equation*}
\begin{split}
&\lim_{L\rightarrow\infty}\|{\bf u}\cdot{\bf e}_r(x,\cdot)\|_{C^1(\overline{\mathcal{N}^-_{g_D}\cap\{x>L\}})}=0,\\
&\lim_{L\rightarrow\infty}\|\partial_rp(x,\cdot)-\frac{\rho ({\bf u}\cdot{\bf e}_\theta)^2}{r}(x,\cdot)\|_{C^0(\overline{\mathcal{N}_{g_D}^-\cap\{x>L\}})}=0.
\end{split}
\end{equation*}
\end{itemize}
\end{theorem}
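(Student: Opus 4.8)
The plan is to establish part (a) by recasting Problem~\ref{3D-Problem2} through a Helmholtz decomposition of the velocity field (this is the content of Theorem~\ref{3D-Thm-HD}), solving the decomposed free boundary problem by a fixed-point iteration, carried out first in the truncated cylinders $\mathcal{N}_L:=\mathcal{N}\cap\{x<L\}$ with uniform-in-$L$ estimates and then passed to the limit $L\to\infty$; part (b) will then follow by combining the radial momentum equation in \eqref{3D-ang} with energy estimates for the stream function of the meridian flow.

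For (a): because $B\equiv B_0^-$ and, by the last two equations of \eqref{3D-ang}, the entropy $S$ and the angular momentum density $\Lambda=ru_\theta$ are transported along streamlines, one determines $S$ and $\Lambda$ in $\mathcal{N}^-_{g_D}$ by tracing streamlines back to $\Gamma_{\rm en}^-$, where they equal $S_{\rm en}$ and $r\nu_{\rm en}$. Writing ${\bf u}=\nabla\phi+{\bf V}$, with the vortical part ${\bf V}$ carrying the swirl $u_\theta=\Lambda/r$ and the angular vorticity $\omega_\theta=\partial_ru_x-\partial_xu_r$, the Bernoulli law $B_0^-=\tfrac12|{\bf u}|^2+\tfrac{\gamma}{\gamma-1}S\rho^{\gamma-1}$ expresses $\rho$ as a function of $|{\bf u}|^2$ and $S$ in the subsonic regime, so the continuity equation $\mathrm{div}(\rho{\bf u})=0$ becomes a second-order quasilinear elliptic equation for $\phi$ (elliptic exactly because of the subsonic condition). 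The vortical part ${\bf V}$ is in turn governed by the transport equation for $\Lambda$ together with a linear, second-order elliptic equation for $\omega_\theta/r$ that is singular at the axis $r=0$, with right-hand side built from $\nabla S$, $\nabla\Lambda$, and the current iterate. Since ${\bf u}\cdot{\bf n}_{g_D}=0$ on $\Gamma_{g_D}$, the free boundary is the streamline issuing from $(0,\tfrac12)$, so it is recovered from an iterate by solving $g_D'(x)=\tfrac{u_r}{u_x}(x,g_D(x))$ with $g_D(0)=\tfrac12$, while the condition $p=p_0$ on $\Gamma_{g_D}$ is imposed as an oblique/Neumann-type boundary condition for $\phi$ via the Bernoulli law. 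Iterating --- update $(S,\Lambda)$ and $g_D$ along streamlines of the previous iterate, then solve the elliptic problem for $\phi$ in $\mathcal{N}^-_{g_D}$ and the singular elliptic problem for the vorticity, and reconstruct $({\bf u},\rho,p)$ --- and invoking Schauder estimates under the smallness $\sigma\le\sigma_1$, one obtains a map that is a contraction (or at least compact) on a small ball of a suitably weighted $C^{1,\alpha}$ space; its fixed point is the desired solution, with \eqref{Thm2.1-uniq-est} coming from the linear estimates. Uniform bounds in $L$ permit passing to the limit along an exhaustion $\mathcal{N}_L\uparrow\mathcal{N}$, and one finally checks that the limit is a weak solution with a contact discontinuity in the sense of Definition~\ref{definition-wsol} by verifying the reformulated conditions $(i')$--$(ii')$.

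I expect the principal difficulty to be the two issues flagged in the introduction. First, the unique solvability of the singular-coefficient elliptic equation for $\omega_\theta/r$ near $r=0$: this forces one to work in function spaces encoding the correct vanishing at the axis (consistent with the compatibility $\partial_r(S_{\rm en},\nu_{\rm en})(0)={\bf 0}$ of Remark~\ref{Rem1}), and to analyze carefully the streamlines that approach a neighborhood of the axis so that $S$ and $\Lambda$, and hence the source term, retain enough regularity there. Second, the decomposition of the Rankine--Hugoniot conditions $[p]_{\Gamma_{g_D}}=0$ and ${\bf u}\cdot{\bf n}_{g_D}=0$ into well-posed boundary conditions for $\phi$ and for the vortical part, with the nonlinear remainder being a genuinely higher-order, compact perturbation --- this is where the ``more sophisticated computations'' relative to the two-dimensional case of \cite{bae2018contact} are needed, since in 3D both the swirl and the curved geometry of $\Gamma_{g_D}$ feed nontrivially into the jump relations.

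For (b): introduce the stream function $\psi$ of the meridian flow, normalized by $\rho ru_x=\partial_r\psi$ and $\rho ru_r=-\partial_x\psi$, so that $\psi$ satisfies a uniformly elliptic equation with $\psi\equiv0$ on the axis and $\psi\equiv\mathrm{const}$ on $\Gamma_{g_D}$. Testing this equation against a cutoff in $x$ and integrating over $\mathcal{N}^-_{g_D}\cap\{x>L\}$ yields, under $\sigma\le\sigma_2$, an energy inequality of Saint-Venant type $E(L)\le C\,|E'(L)|$ with finite total energy, which forces $E(L)\to0$ as $L\to\infty$; hence $\nabla\psi\to0$, and in particular ${\bf u}\cdot{\bf e}_r\to0$, with the $C^1$ convergence obtained by bootstrapping via interior and boundary Schauder estimates on the thin slabs $\{L<x<L+1\}$. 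Passing to the limit $x\to\infty$ in the radial momentum equation $\rho(u_x\partial_x+u_r\partial_r)u_r-\tfrac{\rho u_\theta^2}{r}+\partial_rp=0$, and using that $u_r$ together with its first derivatives tends to $0$ uniformly, gives $\partial_rp-\tfrac{\rho({\bf u}\cdot{\bf e}_\theta)^2}{r}\to0$, which is the second asserted limit. This quantity need not vanish in the limit, because the swirl $\Lambda/r$ survives at $x=\infty$ and sustains a nontrivial radial pressure gradient; this is the purely three-dimensional phenomenon emphasized in the introduction, in contrast with the two-dimensional result of \cite{bae2018contact}.
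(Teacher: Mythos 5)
Your part (a) follows essentially the same route as the paper: Helmholtz decomposition ${\bf u}=\nabla\varphi+\operatorname{curl}{\bf V}$ with ${\bf V}=h{\bf e}_r+\psi{\bf e}_\theta$, transport of $(S,\Lambda)$ along streamlines, a quasilinear elliptic equation for $\varphi$ and a singular elliptic equation for the angular component of the vector potential, the free boundary recovered as the streamline through $(0,\tfrac12)$, iteration in truncated cylinders $\mathcal{N}_L$ with $L$-independent Schauder estimates, and a diagonal limit. One point you leave unspecified is in fact the paper's central device, and your tentative assignment of it is backwards: the condition $p=p_0$ on $\Gamma_{g_D}$ is \emph{not} imposed as an oblique condition on $\varphi$. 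Via the Bernoulli law it is split into $\nabla\varphi\cdot{\bm\tau}_{g_D}=\nabla\varphi_0\cdot{\bm\tau}_{g_D}$ (which integrates to a Dirichlet condition for $\varphi$ along $\Gamma_{g_D}$) and the oblique condition $\tfrac1r\nabla(r\psi)\cdot{\bf n}_{g_D}=\mathcal{B}(g_D,g_D',S,\Lambda)$ for $\psi$; it is precisely this split that makes each linearized boundary value problem well posed and yields the compactness of the approximations. You correctly flag this as the main difficulty, but a complete proof must exhibit the split.

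Part (b) as written has a genuine gap. With the stream function $\mathfrak{h}$ normalized by $\partial_r\mathfrak{h}=r\rho u_x$, $\partial_x\mathfrak{h}=-r\rho u_r$, the gradient $\nabla\mathfrak{h}$ does \emph{not} tend to zero and its Dirichlet integral over $\Omega_{g_D}^-$ is infinite, since $\partial_r\mathfrak{h}\approx\rho_0^-u_0\,r$ uniformly; nor is there a known asymptotic profile to subtract, because the theorem does not assert that $u_x$ converges (only $u_r$ and the radial momentum balance). So testing the equation for $\mathfrak{h}$ (or $\mathfrak{h}-\mathfrak{h}_0$) cannot produce the claimed decay. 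The correct object is $\omega:=\partial_x\mathfrak{h}=-r\rho u_r$: one differentiates the quasilinear stream-function equation (equivalent to the radial momentum equation) with respect to $x$ to obtain a linear divergence-form equation for $\omega$, derives a conormal (Robin-type) boundary condition for $\omega$ on $\Gamma_{\rm cd}^{g_D}$ by differentiating the Bernoulli relation $|\nabla\mathfrak{h}|^2=\mathcal{C}_1 g_D^2-\mathcal{C}_2$ tangentially along the contact discontinuity, and only then runs the cutoff energy estimate. Moreover, the singular weights forced by the axis require controlling $\iint |\nabla\omega|^2/r$ (using $\omega=0$ at $r=0$ and Hardy-type inequalities), and the finiteness of the total energy comes from the pointwise bounds $|\nabla\omega|\le C$, $|\nabla\omega|/r\le C$ supplied by the $C^{1,\alpha}$ estimate of part (a), rather than from a Saint-Venant differential inequality. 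Once $\|\nabla\omega\|_{C^0(\{x>L\})}\to0$ is in hand, your concluding steps (bootstrapping to $C^1$ decay of $u_r$ and passing to the limit in the radial momentum equation) agree with the paper.
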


\begin{remark}[Zero swirl case]\label{Cor-Zero}
As we shall see later, the constant $\sigma_1$ in Theorem \ref{3D-MainThm}(a) will be chosen sufficiently small so that the estimate \eqref{Thm2.1-uniq-est} yields that
\begin{equation}
\label{nonvan-x-vel}
{\bf u}\cdot {\bf e}_x\ge \frac 12 u_0\quad\text{ in $\overline{\mathcal{N}_{g_D}^-}$.}
\end{equation}
If $\nu_{\rm en}=0$ on $\Gamma_{\rm en}$, by the definition of $\Lambda$ given by \eqref{definition-Lambda}, then it follows from \eqref{natural-comp-cond}, the transport equation $\rho(u_x\partial_x+u_r\partial_r)\Lambda=0$ given in \eqref{3D-ang}, and the estimate \eqref{nonvan-x-vel} that $\Lambda\equiv 0$ in $\overline{\mathcal{N}_{g_D}^-}$. And, this implies that
$\displaystyle{\frac{\rho ({\bf u}\cdot{\bf e}_\theta)^2}{r}\equiv 0}$ in $\overline{\mathcal{N}_{g_D}^-}$.
(See \eqref{S-Lambda} for further details.)
In this case, Theorem \ref{3D-MainThm}(b) yields that
$$\lim_{L\rightarrow\infty}\|p(x,\cdot)- p_0\|_{C^0(\overline{\mathcal{N}\cap\{x>L\}})}=0,$$
where we extend the definition of $p$ onto $\mathcal{N}\setminus \mathcal{N}^-_{g_D}$ by $p=p_0$ in $\mathcal{N}\setminus \mathcal{N}^-_{g_D}$. And, this coincides with the result obtained from \cite{bae2018contact}. From this perspective, the two dimensional subsonic weak solution with a contact discontinuity, constructed in \cite{bae2018contact}, can be considered as a three-dimensional subsonic weak solution with \emph{the zero-swirl} boundary condition for ${\nu}_{\rm en}$ at the entrance of the cylinder $\mathcal{N}$.
\end{remark}

\begin{figure}[!h]
\centering
\includegraphics[width=0.8\columnwidth]{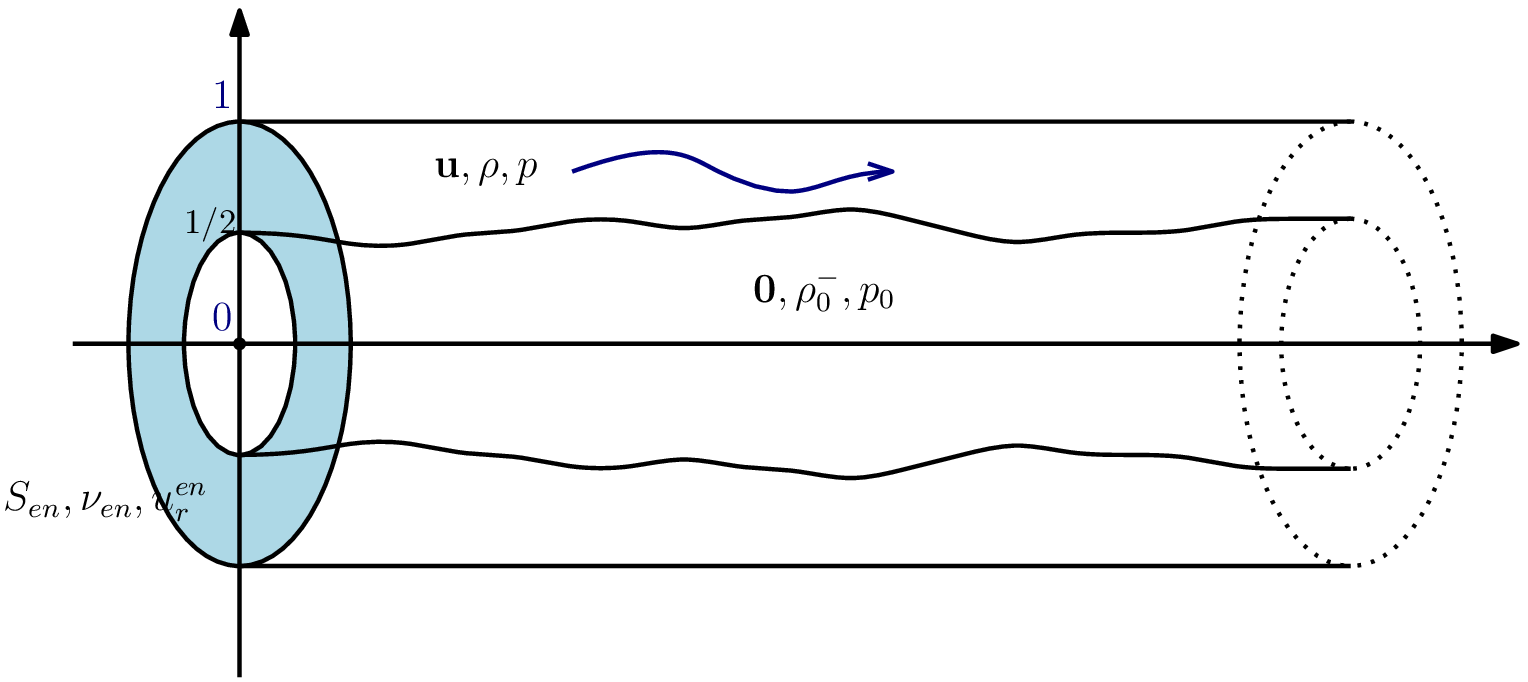}
\caption{Remark~\ref{rem-rev}}\label{figrem-rev}
\end{figure}
\begin{remark}\label{rem-rev}
In Problem \ref{3D-Problem2}, we seek a subsonic weak solution to steady Euler system with a contact discontinuity $r=g_D(x)$ by fixing the outer-layer flow in $\mathcal{N}\cap\{r>g_D(x)\}$ as a uniform state $({\bf u}, \rho, p)=({\bf 0}, \rho_0^+, p_0)$.
One can also consider a problem to seek a subsonic weak solution to steady Euler system with a contact discontinuity $r=\tilde{g}_D(x)$ by fixing {\emph{the inner-layer flow}} in $\mathcal{N}\cap\{r<\tilde{g}_D(x)\}$ as a uniform state $({\bf u}, \rho, p)=({\bf 0}, \rho_0^-, p_0)$ (See Fig. \ref{figrem-rev}).
Actually, this problem is even simpler than Problem \ref{3D-Problem2} for the following reason:
In order to solve Problem \ref{3D-Problem2}, we use Helmholtz decomposition
${\bf u}=\nabla\varphi+\mbox{curl}{\bf V}({\bf x})$ with ${\bf V}({\bf x})=h(x,r){\bf e}_r+\psi(x,r){\bf e}_{\theta}$.
With this representation, \eqref{E-System} is decomposed as a system of second order elliptic equations for $(\varphi,\psi)$, and transport equations for $(S,\Lambda)$. In this reformulation, one of the difficulties rises. Namely, the equation for $\psi$ becomes a singular-coefficient elliptic equation, with a coefficient blow-up on the $x$-axis ($r=0$).
If the inner-layer flow is fixed as a uniform state with $({\bf u}, \rho, p)=({\bf 0}, \rho_0^-, p_0)$, however, such a singularity issue is not needed to be considered, as the the inner-layer flow is fixed, and the outer-layer flow state is to be determined by solving nonlinear system of equations for $(\varphi, \psi, S, \Lambda)$. In particular, the outer-layer of $\mathcal{N}$ is away from the $x$-axis, therefore coefficients of all the equations are regular.

\end{remark}


\section{Reformulation of Problem \ref{3D-Problem2} via Helmholtz decomposition}\label{3D-sec-Hel}
For a function $g_D:\mathbb{R}^+\longrightarrow (0,1)$ to be determined along with $({\bf u}, {\rho}, p)$ in $\mathcal{N}^-_{g_D}$, we express the velocity vector field ${\bf u}=u_x(x,r){\bf e}_x+u_r(x,r){\bf e}_r+u_{\theta}(x,r){\bf e}_{\theta}$ as
\begin{equation*}\label{u-HD}
{\bf u}({\bf x})=\nabla\varphi({\bf x})+\mbox{curl}\,{\bf V}({\bf x})\quad\mbox{in}\quad \mathcal{N}_{g_D}^-
\end{equation*}
for axially symmetric functions
$$\varphi({\bf x})=\varphi(x,r),\quad{\bf V}({\bf x})=h(x,r){\bf e}_r+\psi(x,r){\bf e}_{\theta}.$$
If $(\varphi, {\bf V})$ are $C^2$ in $\mathcal{N}_{g_D}^-$,
then a direct computation yields
\begin{equation}\label{3D-u}
{\bf u}=\left(\partial_x\varphi+\frac{1}{r}\partial_r(r\psi)\right){\bf e}_x+(\partial_r\varphi-\partial_x\psi){\bf e}_r+(\partial_x h){\bf e}_{\theta},
\end{equation}
from which we derive that
\begin{equation*}
u_x=\partial_x\varphi+\frac{1}{r}\partial_r(r\psi),\quad u_r=\partial_r\varphi-\partial_x\psi,\quad u_{\theta}=\frac{\Lambda}{r}=\partial_xh.
\end{equation*}
Hereafter, we denote the velocity field ${\bf u}$ as
\begin{equation}
\label{definition-q}
{\bf u}={\bf q}(r,\psi,D\psi,D\varphi,\Lambda)\quad\mbox{for}\quad D=(\partial_x,\partial_r).
\end{equation}
For such ${\bf q}(r,\psi,D\psi,D\varphi,\Lambda)$, set
\begin{equation}\label{def-T}
{\bf t}(r,\psi,D\psi,\Lambda):={\bf q}(r,\psi,D\psi,D\varphi,\Lambda)-\nabla\varphi\left(=\mbox{curl}{\bf V}\right).
\end{equation}
By a simple adjustment of computations given in \cite{bae20183}, we can rewrite the system \eqref{3D-ang} as  follows:\begin{equation}\label{3D-H}
\left\{\begin{split}
&\mbox{div}\left(H(S,{\bf q}){\bf q}\right)=0,\\
&-\Delta(\psi{\bf e}_{\theta})=G(S,\Lambda,\partial_r S,\partial_r\Lambda,{\bf t},\nabla\varphi){\bf e}_{\theta},\\
&H(S,{\bf q}){\bf q}\cdot\nabla S=0,\\
&H(S,{\bf q}){\bf q}\cdot\nabla \Lambda=0,\\
\end{split}
\right.
\end{equation}
with
\begin{equation*}
{\bf q}={\bf q}(r,\psi,D\psi,D\varphi,\Lambda),\quad\text{and}\quad {\bf t}={\bf t}(r,\psi,D\psi,\Lambda),
\end{equation*}
for $(H, G)$ defined by
\begin{equation}\label{def-H-G}
\left.\begin{split}
&H(\eta,{\bf q}):=\left[\frac{\gamma-1}{\gamma\eta}\left(B_0^--\frac{1}{2}|{\bf q}|^2\right)\right]^{1/(\gamma-1)},\\
&G(\eta_1,\eta_2,\eta_3,\eta_4,{\bf t},{\bf v}):=\frac{1}{({\bf t}+{\bf v})\cdot{\bf e}_x}\left(\frac{H^{\gamma-1}(\eta_1, {\bf t}+{\bf v})}{\gamma-1}\eta_3+\frac{\eta_2}{r^2}\eta_4\right),\\
\end{split}
\right.
\end{equation}
for $\eta\in\mathbb{R}$, ${\bf q}\in\mathbb{R}^3$, $\eta_1,\eta_2,\eta_3,\eta_4\in\mathbb{R}$, and ${\bf t},{\bf v}\in\mathbb{R}^3$.
\smallskip

Next, we derive boundary conditions for $(g_D,S,\Lambda,\varphi,\psi)$ to satisfy the physical boundary conditions \eqref{Prob2-BC-ent}-\eqref{Prob2-BC-Cont}. We intend to derive the boundary conditions so that a compactness of approximated solutions to Problem \ref{3D-Problem2} can be established.

{\emph{(i) Boundary conditions on $\Gamma_{\rm en}^-$: }} We require $(S, \Lambda, \varphi,\psi)$ to satisfy
\begin{equation}\label{def-varphi-en}
\begin{cases}
(S,\Lambda)(0,r)=(S_{\rm en},r\nu_{\rm en})\\
\varphi(0,r)=\int_{1/2}^ru_r^{\rm en}(t)dt=:\varphi_{\rm en}\\
\partial_x\psi(0,r)=0
\end{cases}\quad\text{on $\Gamma_{\rm en}^-$}
\end{equation}
so that the boundary conditions given in \eqref{Prob2-BC-ent} hold on $\Gamma_{\rm en}^-$ for
\begin{equation}
\label{urp-qHS}
({\bf u}, \rho, p)=({\bf q}, H(S,{\bf q}), SH^{\gamma}(S,{\bf q}))\quad\text{ with ${\bf q}={\bf q}(r,\psi,D\psi,D\varphi,\Lambda)$.}
\end{equation}
\smallskip

{\emph{(ii) The Rankine-Hugoniot conditions \eqref{Prob2-BC-Cont} on $\Gamma_{g_D}$: }}  If a contact discontinuity $\Gamma_{g_D}$ is represented as $\Gamma_{g_D}=\{{\bf x}\in \mathcal{N}: r=g_D(x)\}$, then the unit normal ${\bf n}_{g_D}$ of $\Gamma_{g_D}$ pointing toward $\{r>g_D(x)\}$ is given by
\begin{equation*}
{\bf n}_{g_D}=\frac{-g_D'(x){\bf e}_x+{\bf e}_r}{\sqrt{1+|g_D'(x)|^2}}.
\end{equation*}
Therefore, if $g_D:\mathbb{R}^+\longrightarrow (0,1)$ solves the initial value problem
\begin{equation}\label{g-free-cond}
\left\{\begin{split}
&g_D'(x)=\frac{{\bf q}(r,\psi,D\psi,D\varphi,\Lambda)\cdot{\bf e}_r}{{\bf q}(r,\psi,D\psi,D\varphi,\Lambda)\cdot{\bf e}_x}(x,g_D(x),0)\quad\mbox{for}\quad x>0,\\
&g_D(0)=\frac{1}{2},
\end{split}\right.
\end{equation}
then the condition ${\bf u}\cdot{\bf n}_{g_D}=0$ holds on  $\Gamma_{g_D}$ for ${\bf u}$ given by \eqref{urp-qHS}. We use \eqref{g-free-cond} to find the location of the contact discontinuity $r=g_D(x)$.

Due to axi-symmetry of $\Gamma_{g_D}$, an orthonormal basis of $\Gamma_{g_D}$ can be given as
$$\left\{{\bm\tau}_{g_D}, {\bf e}_{\theta}\right\}\quad\mbox{for}\quad {\bm\tau}_{g_D}:=\frac{{\bf e}_x+g_D'(x){\bf e}_r}{\sqrt{1+|g_D'(x)|^2}}.$$ Then, it follows from the condition ${\bf u}\cdot{\bf n}_{g_D}=0$ on $\Gamma_{g_D}$ that
\begin{equation}\label{ab-u}
|{\bf u}|^2=|{\bf u}\cdot{\bm\tau}_{g_D}|^2+|{\bf u}\cdot{\bf e}_{\theta}|^2\quad\mbox{on}\quad\Gamma_{g_D}.
\end{equation}
By substituting the expression \eqref{3D-u} into \eqref{ab-u}, we get
\begin{equation}\label{3D-u2}
|{\bf u}|^2=\left|\left[\left(\partial_x\varphi+\frac{1}{r}\partial_r(r\psi)\right){\bf e}_x+(\partial_r\varphi-\partial_x\psi){\bf e}_r\right]\cdot{\bm\tau}_{g_D}\right|^2+\left|\frac{\Lambda}{r}\right|^2\mbox{ on }\Gamma_{g_D}.
\end{equation}
On the other hand, to satisfy the condition (f) stated in Problem \ref{3D-Problem2}, ${\bf u}$ should satisfy
\begin{equation}\label{3D-u22}
\begin{split}
|{\bf u}|^2
&=2\left(B_0^--\frac{\gamma p^{1-1/\gamma}S^{1/\gamma}}{\gamma-1}\right)\quad\mbox{on}\quad\Gamma_{g_D}.
\end{split}
\end{equation}
Therefore, if $(\varphi, \psi)$ satisfy
\begin{equation}\label{BC-vphipsi}
\nabla\varphi\cdot{\bm\tau}_{g_D}=\nabla\varphi_0\cdot{\bm\tau}_{g_D}\quad\mbox{and}\quad\frac{1}{r}\nabla(r\psi)\cdot{\bf n}_{g_D}=\mathcal{B}(g_D,g_D',S,\Lambda)
\end{equation}
for $\varphi_0$ and $\mathcal{B}$ defined by
\begin{equation}\label{def-varphi0-B}
\left.\begin{split}
&\varphi_0({\bf x}):=u_0x_1\quad\mbox{for}\quad{\bf x}=(x_1,x_2,x_3)\in\overline{\mathcal{N}_{g_D}^-},\\
&\mathcal{B}(g_D,g_D',S,\Lambda):=\sqrt{2\left(B_0^--\frac{\gamma p_0^{1-1/\gamma}S^{1/\gamma}}{\gamma-1}\right)-\left(\frac{\Lambda}{g_D}\right)^2}-\nabla\varphi_0\cdot{\bm\tau}_{g_D},
\end{split}\right.
\end{equation}
then one can directly check from \eqref{ab-u}--\eqref{3D-u22} that the condition $p=p_0$ on $\Gamma_{g_D}$ given in  \eqref{Prob2-BC-Cont} holds for $({\bf u}, p)$ given by \eqref{urp-qHS}.

We collect all the boundary conditions for $(g_D,S,\Lambda,\varphi,\psi)$ with \eqref{g-free-cond} as follows:
\begin{equation}\label{3D-BC-C}
\left\{
\begin{split}
(S,\Lambda)=(S_{\rm en},r\nu_{\rm en}),\quad\varphi=\varphi_{\rm en},\quad\partial_x\psi=0\quad&\mbox{on}\quad\Gamma_{\rm en}^-,\\
\nabla\varphi\cdot{\bm\tau}_{g_D}=\nabla\varphi_0\cdot{\bm\tau}_{g_D},\quad \frac{1}{r}\nabla(r\psi)\cdot{\bf n}_{g_D}=\mathcal{B}(g_D,g_D',S,\Lambda)\quad&\mbox{on}\quad\Gamma_{g_D}.\\
\end{split}\right.
\end{equation}

\begin{theorem}\label{3D-Thm-HD}
For given radial functions
$S_{\rm en}(r)$, $\nu_{\rm en}(r)$ and $u_r^{\rm en}(r)$ on $\Gamma_{\rm en}$, assume that they satisfy \eqref{def-entrance-ep}, and let  $\sigma(S_{\rm en}, \nu_{\rm en}, u_r^{\rm en})$ be given by \eqref{definition-sigma}. For simplicity of notations, let $\sigma$ denote  $\sigma(S_{\rm en}, \nu_{\rm en}, u_r^{\rm en})$.

For any fixed $\alpha\in(0,1)$, there exists a small constant $\sigma_3>0$ depending only on $(u_0,\rho_0^-,p_0,S_0^-)$ and $\alpha$ so that if
\begin{equation}\label{enu-sigma}
\sigma\le\sigma_3,
\end{equation}
then the free boundary problem \eqref{3D-H} with boundary conditions \eqref{g-free-cond} and \eqref{3D-BC-C} has a solution $(g_D, S, \Lambda, \varphi, \psi)$ that satisfies
\begin{equation}\label{Thm-HD-est}
\begin{split}
\|g_D-\frac{1}{2}\|_{2,\alpha,\mathbb{R}^+}&\le C\sigma,\\
\|\varphi-\varphi_0\|_{2,\alpha,\mathcal{N}^-_{g_D}}+\|\psi{\bf e}_{\theta}\|_{2,\alpha,\mathcal{N}^-_{g_D}}+\|(S,\Lambda)-(S_0^-,0)\|_{1,\alpha,\mathcal{N}^-_{g_D}}&\le C\sigma,
\end{split}
\end{equation}
where the constant $C>0$ depends only on $(u_0,\rho_0^-,p_0,S_0^-)$ and $\alpha$.
\end{theorem}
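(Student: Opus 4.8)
\textbf{Proof proposal for Theorem \ref{3D-Thm-HD}.}
The plan is to construct the solution $(g_D,S,\Lambda,\varphi,\psi)$ by an iteration scheme that decouples the elliptic problems for $(\varphi,\psi)$ from the transport problems for $(S,\Lambda)$ and from the free-boundary ODE \eqref{g-free-cond} for $g_D$, and then to pass to a solution on the infinite cylinder by exhausting $\mathcal{N}_{g_D}^-$ with truncated domains $\mathcal{N}_{g_D}^-\cap\{x<L\}$ as anticipated in Section \ref{3D-sec-Cut}. First I would fix a small $\sigma$ and work in a closed convex set $\mathcal{K}$ of the form
\begin{equation*}
\mathcal{K}:=\left\{(\bar g,\bar S,\bar\Lambda,\bar\varphi,\bar\psi):\ \|\bar g-\tfrac12\|_{2,\alpha}\le C\sigma,\ \|\bar\varphi-\varphi_0\|_{2,\alpha}+\|\bar\psi{\bf e}_\theta\|_{2,\alpha}+\|(\bar S,\bar\Lambda)-(S_0^-,0)\|_{1,\alpha}\le C\sigma\right\}
\end{equation*}
for a constant $C$ to be chosen, and define an iteration map $\mathcal{J}:\mathcal{K}\to\mathcal{K}$ as follows. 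Given $(\bar g,\bar S,\bar\Lambda,\bar\varphi,\bar\psi)$, first solve the ODE \eqref{g-free-cond} with the frozen argument $\bar{\bf q}={\bf q}(r,\bar\psi,D\bar\psi,D\bar\varphi,\bar\Lambda)$ to get a new free boundary $g_D$; then on the fixed domain $\mathcal{N}_{g_D}^-$ solve the two transport equations in \eqref{3D-H} for $(S,\Lambda)$ with the frozen velocity direction $\bar{\bf q}$ and the entrance data \eqref{def-varphi-en}, using that $\bar{\bf q}\cdot{\bf e}_x\ge\tfrac12 u_0$ so the characteristics run monotonically from $\Gamma_{\rm en}^-$ to $x=\infty$; then solve the first (potential) equation $\mathrm{div}(H(\bar S,\bar{\bf q})\nabla\varphi)=-\mathrm{div}(H(\bar S,\bar{\bf q})\,{\bf t}(r,\bar\psi,D\bar\psi,\bar\Lambda))$, a uniformly elliptic equation by subsonicity, with the oblique condition $\nabla\varphi\cdot{\bm\tau}_{g_D}=\nabla\varphi_0\cdot{\bm\tau}_{g_D}$ on $\Gamma_{g_D}$, the Dirichlet condition on $\Gamma_{\rm en}^-$, and slip on the axis; and finally solve the vector Poisson-type equation $-\Delta(\psi{\bf e}_\theta)=G(\bar S,\bar\Lambda,\partial_r\bar S,\partial_r\bar\Lambda,\bar{\bf t},\nabla\varphi){\bf e}_\theta$ with the boundary conditions $\partial_x\psi=0$ on $\Gamma_{\rm en}^-$ and $\frac1r\nabla(r\psi)\cdot{\bf n}_{g_D}=\mathcal{B}(\bar g,\bar g',\bar S,\bar\Lambda)$ on $\Gamma_{g_D}$. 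Define $\mathcal{J}(\bar g,\bar S,\bar\Lambda,\bar\varphi,\bar\psi):=(g_D,S,\Lambda,\varphi,\psi)$.

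The next step is to show $\mathcal{J}$ maps $\mathcal{K}$ into itself and is continuous (or contractive in a weaker norm) for $\sigma\le\sigma_3$. The ODE part is handled by Gronwall: since the right side of \eqref{g-free-cond} is $O(\sigma)$ in $C^{1,\alpha}$ and Lipschitz in $g_D$, one gets $\|g_D-\tfrac12\|_{2,\alpha,\mathbb{R}^+}\le C\sigma$. The transport part is handled by integrating along characteristics and the flow-box/straightening estimates from \cite{bae20183}, giving $\|(S,\Lambda)-(S_0^-,0)\|_{1,\alpha}\le C\sigma$ from the entrance bound $\|S_{\rm en}-S_0^-\|_{2,\alpha}+\|\nu_{\rm en}\|_{2,\alpha}\le C\sigma$; here the compatibility conditions \eqref{natural-comp-cond} ensure $\Lambda/r=u_\theta$ extends as a $C^{1,\alpha}$ axisymmetric field across $r=0$. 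The two elliptic solvability statements are Schauder estimates for oblique-derivative problems, uniform in $L$ after the standard reflection/weighted arguments near $r=0$; the source terms $\mathrm{div}(H\,{\bf t})$ and $G{\bf e}_\theta$ are $O(\sigma)$ in $C^\alpha$, and the boundary data $\nabla\varphi_0\cdot{\bm\tau}_{g_D}-$(its value at $g_D=\tfrac12$) and $\mathcal{B}(\bar g,\bar g',\bar S,\bar\Lambda)-$(background value) are $O(\sigma)$, so $\|\varphi-\varphi_0\|_{2,\alpha}+\|\psi{\bf e}_\theta\|_{2,\alpha}\le C\sigma$ with $C$ independent of the choice in $\mathcal{K}$. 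Choosing $C$ large (absorbing all these constants) and then $\sigma_3$ small closes the self-map property; the Schauder-type estimates also give compactness in the lower $C^{1,\alpha'}$ norm, so Schauder's fixed point theorem applies on the truncated domains, and a diagonal/limit argument in $L\to\infty$ with the uniform $C^{2,\alpha}$ bounds produces a solution on all of $\mathcal{N}_{g_D}^-$ satisfying \eqref{Thm-HD-est}. One then checks that a fixed point of $\mathcal{J}$ indeed solves \eqref{3D-H} with \eqref{g-free-cond} and \eqref{3D-BC-C}: the derivation in Section \ref{3D-sec-Hel} shows these reformulated conditions are equivalent to the physical ones of Problem \ref{3D-Problem2}, so ${\bf u}={\bf q}$, $\rho=H(S,{\bf q})$, $p=SH^\gamma(S,{\bf q})$ is the desired weak solution.

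The step I expect to be the main obstacle is the solvability and uniform estimate for the $\psi$-equation $-\Delta(\psi{\bf e}_\theta)=G{\bf e}_\theta$: in cylindrical coordinates this is the singular-coefficient ODE-in-$r$/elliptic equation $\partial_{xx}\psi+\partial_{rr}\psi+\frac1r\partial_r\psi-\frac{1}{r^2}\psi=G$, whose zeroth-order coefficient blows up on the axis, so standard Schauder theory does not directly apply and one must work with the weighted spaces and the reformulation in terms of $\frac1r\partial_r(r\psi)$ used in \cite{bae20183}, while simultaneously respecting the oblique boundary condition $\frac1r\nabla(r\psi)\cdot{\bf n}_{g_D}=\mathcal{B}$ on the free boundary — the coupling of a genuinely three-dimensional singular elliptic operator with a nonlinear Rankine–Hugoniot-derived boundary condition on a free surface is exactly the novelty flagged in the introduction. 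A secondary difficulty is keeping all constants uniform in the truncation length $L$ so that the $L\to\infty$ limit survives; this requires the a priori estimates near $r=0$ to be genuinely local and the decay/boundedness to come from the structure of the equations rather than from the finite domain, which is why the stream-function/energy argument of Section \ref{3D-sec-ex} is needed separately for part (b) and why the estimate \eqref{Thm-HD-est} is stated with a single domain-independent $C$.
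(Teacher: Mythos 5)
Your overall architecture (nested iteration, Schauder fixed point on truncated domains, reflection and the vector formulation $-\Delta(\psi\mathbf{e}_\theta)=G\mathbf{e}_\theta$ to tame the singular coefficient, then a diagonal limit $L\to\infty$) matches the paper's strategy, and you correctly identify the axis singularity as the delicate elliptic step. However, there is one genuine gap that would break the argument: your treatment of the free boundary. You propose to obtain $g_D$ by integrating the ODE \eqref{g-free-cond} with frozen data and to conclude $\|g_D-\tfrac12\|_{2,\alpha,\mathbb{R}^+}\le C\sigma$ ``by Gronwall, since the right side is $O(\sigma)$.'' Integrating a right-hand side of size $O(\sigma)$ from $x=0$ only yields $|g_D(x)-\tfrac12|\le C\sigma x$, which is $O(\sigma L)$ on a cut-off domain of length $L$ and unbounded on $\mathbb{R}^+$; the constant is not uniform in $L$, so both the self-map property of your iteration set and the $L\to\infty$ limit fail. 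The paper avoids this by \emph{not} integrating the ODE: it defines the updated boundary $f$ through the conserved mass flux, $\int_0^{f(x)}t\,\rho^\ast\mathbf{u}^\ast\cdot\mathbf{e}_x(x,t)\,dt=\int_0^{1/2}t\,\rho^\ast\mathbf{u}^\ast\cdot\mathbf{e}_x(0,t)\,dt$ (see \eqref{3D-f-est1}--\eqref{3D-f-est3}), which gives a pointwise-in-$x$, $L$-independent bound $\|f-\tfrac12\|_{2,\alpha}\le C\sigma$ because $f(x)^2-\tfrac14$ is controlled by differences of integrals of $\rho u_x-\rho_0^-u_0$, and then verifies a posteriori (by differentiating the flux identity and using the divergence form of the continuity equation) that the fixed point satisfies \eqref{g-free-cut}. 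Without this conservation-law formulation your scheme cannot close.

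A secondary, fixable issue: you solve the transport equations for $(S,\Lambda)$ with the \emph{frozen} velocity $\bar{\mathbf{q}}$ before solving the elliptic system. The paper instead solves the transport equations with the \emph{updated} field $H(S_\ast,\mathbf{q}_\ast)\mathbf{q}_\ast$, which is divergence free by the first equation of \eqref{S-Free-BP}; this is what permits the level-set representation $\mathcal{W}=\mathcal{W}_{\rm en}(\mathcal{R}_0)$ via the Stokes stream function $w$ in \eqref{def-w}--\eqref{3D-R0} and yields $C^{1,\alpha}$ bounds (and the $C^{2,\alpha}$ bound \eqref{S-est-two} in two dimensions needed for uniqueness) with $L$-independent constants, as well as clean behaviour of $\Lambda/r$ at the axis. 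Your frozen field is neither divergence free nor tangent to the updated boundary $g_D$, and your iteration set places $(\bar\varphi,\bar\psi)$ on a domain that changes with $\bar g$, which creates domain-matching problems the paper handles with the explicit transformations $T^{(k)}$, $\mathfrak{T}$ and the extension operator $\mathcal{E}_f$ onto the fixed domain $\mathcal{N}_{L,3/4}^-$. These points should be incorporated before the fixed-point argument can be made rigorous.
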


Hereafter, a constant $C$ is said to be chosen depending only on the data if $C$ is chosen depending only on $(u_0,\rho_0^-,p_0,S_0^-)$.

We first prove Theorem \ref{3D-Thm-HD}, then apply  this theorem to prove Theorem  \ref{3D-MainThm}.
We will prove Theorem \ref{3D-Thm-HD} by a limiting argument. So we introduce a free boundary problem in a cut-off domain of the finite length $L$, and solve it by the method of iteration in Section \ref{3D-sec-Cut}.
And, uniform estimates of the solutions to the free boundary problems in cut-off domains are established independently of the length $L$. In Section \ref{5-1}, we prove Theorem \ref{3D-Thm-HD} by taking a sequence of the solutions to the free boundary problems in cut-off domains, then passing to the limit $L\rightarrow\infty$.
The limit yields a solution to the free boundary problem \eqref{3D-H} with boundary conditions \eqref{g-free-cond} and \eqref{3D-BC-C}, then we can prove that $(g_D, {\bf u}, \rho, p)$ for $({\bf u}, \rho, p)$ given by \eqref{urp-qHS} yields a solution to Problem \ref{3D-Problem2}. This proves Theorem \ref{3D-MainThm}(a).
Finally, Theorem \ref{3D-MainThm}(b) is proved by using the stream function formulation and energy estimates.

\section{Free boundary problems in  cut-off domains}\label{3D-sec-Cut}

\subsection{Iteration framework}
Let $\mathcal{N}$ be  given by \eqref{def-N-cyl}.
For a constant $L>0$, define  $\mathcal{N}_L$  by
\begin{equation*}
\mathcal{N}_L:=\mathcal{N}\cap\{0<x<L\}.
\end{equation*}
For a function $f:[0,L]\rightarrow(0,1)$, we set
\begin{equation*}
\begin{split}
&\mathcal{N}_{L,f}^-:=\mathcal{N}_L\cap\{r<f(x)\},\\
&\Gamma^{L,f}_{\rm ex}:=\partial\mathcal{N}_{L,f}^-\cap\{x=L\},\quad
\Gamma_{\rm cd}^{L,f}:=\partial\mathcal{N}_{L,f}^-\cap\{r=f(x)\}.
\end{split}
\end{equation*}

\begin{problem}\label{Prob3-Cut} Find a solution $(f,S,\Lambda,\varphi,\psi)$ of the following free boundary problem:
\begin{equation*}\label{3D-Cut-Eq}
\eqref{3D-H}\quad\mbox{in}\quad\mathcal{N}_{L,f}^-
\end{equation*}
 with boundary conditions
\begin{equation}\label{3D-Cut-BC}
\left\{\begin{split}
(S,\Lambda)=(S_{\rm en},r\nu_{\rm en}),\quad\varphi=\varphi_{\rm en},\quad\partial_x\psi=0\quad&\mbox{on}\quad\Gamma_{\rm en}^-,\\
\partial_r\varphi=0,\quad\partial_x\psi=0\quad&\mbox{on}\quad\Gamma^{L,f}_{\rm ex},\\
\nabla\varphi\cdot{\bm\tau}_f=\nabla\varphi_0\cdot{\bm\tau}_f,\quad\frac{1}{r}\nabla(r\psi)\cdot{\bf n}_{f}=\mathcal{B}(f,f',S,\Lambda)\quad&\mbox{on}\quad\Gamma_{\rm cd}^{L,f},
\end{split}\right.
\end{equation}
and
\begin{equation}\label{g-free-cut}
\left\{\begin{split}
&f'(x)=\frac{{\bf q}(r,\psi,D\psi,D\varphi,\Lambda)\cdot{\bf e}_r}{{\bf q}(r,\psi,D\psi,D\varphi,\Lambda)\cdot{\bf e}_x}(x,f(x),0)\quad\mbox{for}\quad x>0,\\
&f(0)=\frac{1}{2},
\end{split}\right.
\end{equation}
where
$${\bm \tau}_f:=\frac{{\bf e}_x+f'(x){\bf e}_r}{\sqrt{1+|f'(x)|^2}},\quad {\bf n}_f:=\frac{-f'(x){\bf e}_x+{\bf e}_r}{\sqrt{1+|f'(x)|^2}}.$$
\end{problem}

\begin{proposition}\label{3D-Prop4.1}
For given radial functions
$S_{\rm en}(r)$, $\nu_{\rm en}(r)$ and $u_r^{\rm en}(r)$ on $\Gamma_{\rm en}$, assume that they satisfy \eqref{def-entrance-ep}, and let  $\sigma(S_{\rm en}, \nu_{\rm en}, u_r^{\rm en})$ be given by \eqref{definition-sigma}. For simplicity of notations, let $\sigma$ denote  $\sigma(S_{\rm en}, \nu_{\rm en}, u_r^{\rm en})$.

For a fixed $\alpha\in(0,1)$, there exists a small constant $\sigma_4>0$ depending only on the data and $\alpha$ so that if
\begin{equation*}
\sigma\le\sigma_4,
\end{equation*}
then Problem \ref{Prob3-Cut} has a unique solution $(f,S,\Lambda,\varphi,\psi)$ that satisfies
\begin{equation}\label{3D-Prop-est}
\begin{split}
\|f-\frac{1}{2}\|_{2,\alpha,(0,L)}&\le C\sigma,\\
\|\varphi-\varphi_0\|_{2,\alpha,\mathcal{N}_{L,f}^-}+\|\psi{\bf e}_{\theta}\|_{2,\alpha,\mathcal{N}_{L,f}^-}+\|(S,\Lambda)-(S_0^-,0)\|_{1,\alpha,\mathcal{N}_{L,f}^-}&\le C\sigma,
\end{split}
\end{equation}
where the constant $C>0$ depends only on the data and $\alpha$ but independent of $L$.
\end{proposition}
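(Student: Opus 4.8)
The plan is to solve Problem~\ref{Prob3-Cut} by a Schauder fixed-point argument after flattening the unknown domain. I would treat the whole tuple $(f,S,\Lambda,\varphi,\psi)$ as the iteration variable and, at each step, recover its components in the order: transport equations for $(S,\Lambda)$, the ordinary differential equation \eqref{g-free-cut} for $f$, and two decoupled second-order elliptic boundary value problems, one for $\psi$ and one for $\varphi$. To begin, fix the domain: for $f\colon[0,L]\to(0,1)$ with $\|f-\frac{1}{2}\|_{2,\alpha,(0,L)}$ small, the change of variables $(x,r)\mapsto(x,y):=\bigl(x,\frac{r}{2f(x)}\bigr)$ carries $\mathcal{N}_{L,f}^-$ onto the fixed cylinder $(0,L)\times(0,\frac{1}{2})$, sending $\Gamma_{\rm cd}^{L,f}$ to $\{y=\frac{1}{2}\}$, $\Gamma_{\rm en}^-$ to $\{x=0\}$, $\Gamma^{L,f}_{\rm ex}$ to $\{x=L\}$, and the axis $\{r=0\}$ to $\{y=0\}$, and it turns the singular factor $1/r$ appearing in the $\psi$-equation of \eqref{3D-H} and in $\mathcal{B}$ into a comparable factor $1/y$, so that no new singularity is created. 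On this fixed domain I would work inside a closed convex set of tuples lying within $C_*\sigma$ of the background $(\frac{1}{2},S_0^-,0,\varphi_0,0)$ in $C^{2,\alpha}\times C^{1,\alpha}\times C^{1,\alpha}\times C^{2,\alpha}\times C^{2,\alpha}$, respecting the prescribed entrance traces of \eqref{3D-Cut-BC} and the compatibility conditions \eqref{natural-comp-cond}, with $C_*$ large and chosen at the end.

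Given a tuple $(\hat f,\hat S,\hat\Lambda,\hat\varphi,\hat\psi)$ in this set, the smallness of $\sigma$ makes the flow subsonic and forces the meridional velocity component $\hat u_x=\partial_x\hat\varphi+\frac{1}{r}\partial_r(r\hat\psi)$ to satisfy $\hat u_x\ge\frac{1}{2}u_0>0$, so every meridional streamline runs monotonically from $\Gamma_{\rm en}^-$ to $\Gamma^{L,f}_{\rm ex}$, with the axis itself a streamline. Transporting $(S_{\rm en},r\nu_{\rm en})$ along these streamlines, or equivalently writing $S$ and $\Lambda$ as Hölder functions of the exactly conserved mass stream function, produces $(S,\Lambda)$ with $\|(S,\Lambda)-(S_0^-,0)\|_{1,\alpha}\le C\sigma$; the behaviour near $r=0$ is handled as in \cite{bae20183}, using the compatibility $\Lambda_{\rm en}(0)=\partial_r\Lambda_{\rm en}(0)=0$ to keep $\Lambda/r$ bounded, and phrasing the bound through the conserved stream function rather than through integration along streamlines is what keeps it independent of $L$. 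Solving \eqref{g-free-cut} with the velocity built from $(\hat\varphi,\hat\psi,\hat\Lambda)$, whose right-hand side equals $\hat u_r/\hat u_x$ restricted to $\{r=f(x),\theta=0\}$ and has $C^{1,\alpha}$-norm $\le C\sigma$, gives $\|f-\frac{1}{2}\|_{2,\alpha,(0,L)}\le C\sigma$, with $f'(0)=0$ and corner compatibility guaranteed by $u_r^{\rm en}\equiv0$ near $r=\frac{1}{2}$. The equation $-\Delta(\psi{\bf e}_\theta)=G{\bf e}_\theta$ is, in scalar form, $-\partial_{xx}\psi-\partial_{rr}\psi-\frac{1}{r}\partial_r\psi+\frac{\psi}{r^2}=G$ with source $G$ of size $O(\sigma)$, together with $\partial_x\psi=0$ on $\Gamma_{\rm en}^-\cup\Gamma^{L,f}_{\rm ex}$, the oblique condition $\frac{1}{r}\nabla(r\psi)\cdot{\bf n}_f=\mathcal{B}(f,f',S,\Lambda)$ on $\Gamma_{\rm cd}^{L,f}$ — a direct computation shows $\mathcal{B}$ vanishes at the background, hence $\|\mathcal{B}\|_{1,\alpha}\le C\sigma$ — and $\psi=0$ on $\{r=0\}$ for regularity; the solvability and Schauder theory for this singular-coefficient operator from \cite{bae20183}, together with the positivity of the zeroth-order coefficient $r^{-2}$ for the maximum principle, yield a unique $\psi$ with $\|\psi{\bf e}_\theta\|_{2,\alpha}\le C\sigma$. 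Finally, with $(S,\Lambda,\psi)$ determined, $\mathrm{div}(H(S,{\bf q}){\bf q})=0$ with ${\bf q}={\bf q}(r,\psi,D\psi,D\varphi,\Lambda)$ is, for small $\sigma$, uniformly elliptic; writing $\varphi=\varphi_0+\phi$ turns it into a quasilinear equation with $O(\sigma)$ right-hand side, with $\phi=\varphi_{\rm en}$ on $\Gamma_{\rm en}^-$, a Neumann-type condition on $\Gamma^{L,f}_{\rm ex}$ and on the wall and axis, and on $\Gamma_{\rm cd}^{L,f}$ the condition $\nabla\varphi\cdot{\bm\tau}_f=\nabla\varphi_0\cdot{\bm\tau}_f$, which prescribes the tangential derivative of $\phi$ along $\Gamma_{\rm cd}^{L,f}$ and hence forces $\phi$ to equal its value $0$ at the corner; standard elliptic theory then gives $\|\phi\|_{2,\alpha}\le C\sigma$.

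These steps define a map $\mathcal{T}$ on the iteration set, and for $\sigma$ small depending only on the data and $\alpha$ they show $\mathcal{T}$ sends the $C_*\sigma$-ball into the $C\sigma$-ball; taking $C_*\ge C$ makes $\mathcal{T}$ a self-map, and since each output component is obtained with a gain of Hölder exponent (or by interpolation) relative to a slightly weaker topology, $\mathcal{T}$ is compact and continuous there, so the Schauder fixed-point theorem produces a solution of Problem~\ref{Prob3-Cut} satisfying \eqref{3D-Prop-est}. Uniqueness follows by estimating the difference of two solutions with the same data in a lower-order norm and absorbing, using the $O(\sigma)$ smallness; equivalently, $\mathcal{T}$ is a contraction in that norm. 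All constants are independent of $L$ because the exit conditions $\partial_r\varphi=\partial_x\psi=0$ are satisfied exactly by the background, the maximum-principle bounds do not see the domain length, the Schauder estimates are local and glued over a covering of fixed scale, and the transport bounds are expressed through the conserved stream function.

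The step I expect to be the main obstacle is the elliptic problem for $\psi$ together with the nonlinearity it imports: solving the singular-coefficient equation for $\psi{\bf e}_\theta$ with the oblique, Rankine--Hugoniot-derived boundary condition $\frac{1}{r}\nabla(r\psi)\cdot{\bf n}_f=\mathcal{B}(f,f',S,\Lambda)$ on the curved free boundary $\Gamma_{\rm cd}^{L,f}$, whose inhomogeneity couples $\psi$ nonlinearly to $(f,S,\Lambda)$, while simultaneously controlling $\psi$ up to the singular axis $\{r=0\}$ and keeping every estimate independent of $L$. This is precisely where the decomposition of the Rankine--Hugoniot conditions via Helmholtz decomposition must be organized so that the a priori bounds close and the approximate solutions are compact, and where the computations become more delicate than in \cite{bae2018contact,bae20183}.
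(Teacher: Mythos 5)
Your overall architecture (fixed-point iteration combining transport of $(S,\Lambda)$ along streamlines, the singular-coefficient elliptic problem for $\psi{\bf e}_\theta$ with the oblique Rankine--Hugoniot condition, and the quasilinear continuity equation for $\varphi$ reduced to a Dirichlet problem for $\phi=\varphi-\varphi_0$) matches the paper's, and your identification of the $\psi$-problem as delicate is apt. But there is a genuine gap in your treatment of the free boundary. You propose to obtain $f$ by integrating the ODE \eqref{g-free-cut} with right-hand side $\hat u_r/\hat u_x$ of size $O(\sigma)$ and conclude $\|f-\tfrac12\|_{2,\alpha,(0,L)}\le C\sigma$ with $C$ independent of $L$. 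Integrating a slope of size $C\sigma$ over $[0,L]$ only gives $\|f-\tfrac12\|_{0,(0,L)}\le C\sigma L$, so the $C^0$ part of the estimate is not uniform in $L$. The only way to get a uniform bound is through conservation of mass along the streamline, i.e. $\int_0^{f(x)}t\hat\rho\hat u_x(x,t)\,dt=\int_0^{1/2}t\hat\rho\hat u_x(0,t)\,dt$ together with $\hat\rho\hat u_x\approx\rho_0^-u_0$; but that identity is only valid while the streamline remains in the domain $\{r<\hat f(x)\}$ where $\hat{\bf u}$ is defined and divergence-free, and nothing in your scheme prevents $f$ from exceeding $\hat f$ (indeed $r=\hat f(x)$ is not a streamline of $\hat{\bf u}$ away from the fixed point, since the boundary conditions imposed on $\Gamma_{\rm cd}^{L,\hat f}$ prescribe only the tangential derivative of $\varphi$ and an oblique derivative of $\psi$, not ${\bf u}\cdot{\bf n}=0$). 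The paper circumvents both problems at once by \emph{defining} the updated boundary through the mass-flux identity \eqref{3D-f-est1}--\eqref{3D-f-est3}, which is always well-posed, yields $\|f-\tfrac12\|_{0}\le C\sigma$ uniformly in $L$ directly from $\|\rho^\ast{\bf u}^\ast-\rho_0^-u_0{\bf e}_x\|\le C\sigma$, and reduces to the ODE \eqref{g-free-cut} exactly at the fixed point $f=f_\ast$.

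The same defect propagates to your uniqueness argument: estimating the difference of the two free boundaries by differencing the ODE again costs a factor of $L$ in the $C^0$ norm, whereas the paper's proof of \eqref{final-f} compares the two solutions through the mass-flux identity \eqref{f12-rhou} to control $\|\widetilde f\|_{0,(0,L)}$ before absorbing. Secondary, repairable points: evaluating $\hat u_r/\hat u_x$ at $(x,f(x))$ needs an extension of the velocity past $r=\hat f(x)$ (or the flattened reformulation must be stated so that this evaluation stays in $[0,\tfrac12]$); and the estimate for the scalar singular operator $-\partial_{xx}-\partial_{rr}-\tfrac1r\partial_r+\tfrac1{r^2}$ with the Robin condition of negative coefficient is obtained in the paper by a barrier argument for the Cartesian components of ${\bf W}=\psi{\bf e}_\theta$ (the function $\mathfrak N$ in the proof of Lemma \ref{Pro-fix-S}), which you should supply rather than appeal to the sign of the zeroth-order coefficient alone.
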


In order to find  $(S, \Lambda)$ as a solution to transport equation $H(S,{\bf q}){\bf q}\cdot\nabla (S, \Lambda)=0$  in $\mathcal{N}_{L,f}^-$, we first need $(f, {\bf q})$ to satisfy the condition \eqref{g-free-cut}. Furthermore, the vector field $H(S,{\bf q}){\bf q}$ needs to be divergence free  (See \cite[Proposition 3.5]{bae20183}).
Therefore, we need to solve a free boundary problem for $(f, \varphi, \psi)$ by fixing approximated entropy and angular momentum density $(\tilde{S}, \tilde{\Lambda})$, then solve $H(\tilde S,{\bf \tilde q}){\bf \tilde q}\cdot\nabla (S, \Lambda)=0$  in $\mathcal{N}_{L,f}^-$ to update $(S, \Lambda)$, where ${\bf \tilde q}$ is given by ${\bf \tilde q}={\bf q}(r, \psi, D\psi, D\varphi, \tilde{\Lambda})$. This procedure yields an iteration map in the iterations sets defined below.

For fixed constants $\epsilon\in(0,1/10)$, $\alpha\in(0,1)$ and $M_1>0$ to be determined later, we define an iteration set
$$\mathcal{P}(M_1):=\mathcal{P}_1(M_1)\times\mathcal{P}_2(M_1)$$
for
\begin{equation}\label{Ent-Ang-set}
\left.
\begin{split}
&\mathcal{P}_1(M_1):=\left\{S=S(x,r)\in C^{1,\alpha/2}(\overline{\mathcal{N}_{L,3/4}^-}):
\begin{split}
&\|S-S_0^-\|_{1,\alpha,\mathcal{N}_{L,3/4}^-}\le M_1\sigma,\\
&\partial_xS\equiv0\mbox{ on }(\Gamma_{\rm en}^-\cap  \{r\ge\frac{1}{2}-\epsilon\})\cup\Gamma^{L,3/4}_{\rm ex}
\end{split}\right\},\\
&\mathcal{P}_2(M_1):=
\left\{\Lambda=r\mathcal{V}(x,r)\in C^{1,\alpha/2}(\overline{\mathcal{N}_{L,3/4}^-}):
\begin{split}
&\|\mathcal{V}\|_{1,\alpha,\Omega_{L,3/4}^-}\le M_1\sigma,\\
&\mathcal{V}(x,0)=0,\,\forall x\in[0,L],\\
&\partial_x\Lambda\equiv0\mbox{ on }(\Gamma_{\rm en}^-\cap \{r\ge\frac{1}{2}-\epsilon\})\cup\Gamma^{L,3/4}_{\rm ex}
\end{split}\right\}.
\end{split}\right.
\end{equation}
for a two dimensional rectangular domain $\Omega_{L,3/4}^-$ given by
\begin{equation*}
\Omega_{L,3/4}^-:=\left\{(x,r)\in\mathbb{R}^2: 0<x<L, \mbox{ }0<r<\frac{3}{4}\right\}.
\end{equation*}

 \begin{problem}\label{Prob4-Fix-S}
For each $\mathcal{W}_{\ast}:=(S_{\ast},\Lambda_{\ast})\in\mathcal{P}(M_1)$, set
\begin{equation*}
{\bf q}_*:={\bf q}(r,\psi,D\psi,D\varphi,\Lambda_{\ast}),\quad
{\bf t}_*:={\bf t}(r,\psi,D\psi,\Lambda_{\ast})
\end{equation*}
for $({\bf q}, {\bf t})$ given by \eqref{definition-q} and \eqref{def-T}.
Then, find $(f,\varphi,\psi)$ satisfying \eqref{g-free-cut} and
\begin{equation}\label{S-Free-BP}
\left\{\begin{split}
	\left.\begin{split}
	&\mbox{div}\left(H(S_{\ast},{\bf q}_*){\bf q}_*\right)=0\\
	&-\Delta(\psi{\bf e}_{\theta})=G(S_{\ast},\Lambda_{\ast},\partial_r S_{\ast},\partial_r\Lambda_{\ast},{\bf t}_*,D\varphi){\bf e}_{\theta}\\
	\end{split}\right.\quad&\mbox{in}\quad\mathcal{N}_{L,f}^-,\\
\varphi=\varphi_{\rm en},\quad\partial_x\psi=0\quad&\mbox{on}\quad\Gamma_{\rm en}^-,\\
\partial_r\varphi=0,\quad\partial_x\psi=0\quad&\mbox{on}\quad\Gamma^{L,f}_{\rm ex},\\
\nabla\varphi\cdot{\bm\tau}_f=\nabla\varphi_0\cdot{\bm\tau}_f,\quad\frac{1}{r}\nabla(r\psi)\cdot{\bf n}_{f}=\mathcal{B}(f,f',S_{\ast},\Lambda_{\ast})\quad&\mbox{on}\quad\Gamma_{\rm cd}^{L,f},
\end{split}\right.
\end{equation}
where $H$, $G$, and $\mathcal{B}$ are given by \eqref{def-H-G} and \eqref{def-varphi0-B}.
\end{problem}
\begin{lemma}\label{Lem-S-free}
Under the same assumptions on $(S_{\rm en},\nu_{\rm en},u_r^{\rm en})$ as in Proposition \ref{3D-Prop4.1},
there exists a small constant $\sigma_5>0$ depending only on the data and $(\alpha, M_1)$ so that if
$$\sigma\le\sigma_5,$$
then, for each $\mathcal{W}_{\ast}\in\mathcal{P}(M_1)$, Problem \ref{Prob4-Fix-S} has a unique solution $(f,\varphi,\psi)$ satisfying
\begin{equation}\label{3D-pps-est}
\|f-\frac{1}{2}\|_{2,\alpha,(0,L)}+\|\varphi-\varphi_0\|_{2,\alpha,\mathcal{N}_{L,f}^-}+\|\psi{\bf e}_{\theta}\|_{2,\alpha,\mathcal{N}_{L,f}^-}\le C\left(M_1+1\right)\sigma,
\end{equation}
where the constant $C>0$ depends only on the data and $\alpha$ but independent of $L$.
\end{lemma}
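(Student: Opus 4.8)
The plan is to solve Problem \ref{Prob4-Fix-S} by a further iteration on the free-boundary position $f$, i.e.\ by an inner iteration map $f\mapsto \hat f$ whose fixed point is the desired free boundary. First, I would fix a candidate boundary $f$ in the set $\Sigma_\delta:=\{f\in C^{2,\alpha}([0,L]): f(0)=\tfrac12,\ \|f-\tfrac12\|_{2,\alpha,(0,L)}\le \delta\}$ with $\delta=C_\ast(M_1+1)\sigma$ for a constant $C_\ast$ to be fixed, flatten the variable domain $\mathcal N_{L,f}^-$ to the fixed rectangle $\mathcal N_{L,3/4}^-$ (or to $\mathcal N_{L,1/2}^-$) by the change of variables $(x,r)\mapsto (x,\,\tfrac{3r}{4f(x)})$, and rewrite the two elliptic problems for $\varphi-\varphi_0$ and for $\psi\mathbf e_\theta$ in the flattened domain. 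In these new coordinates both problems become linear, uniformly elliptic second-order problems with oblique (conormal-type) boundary conditions on $\Gamma_{\rm cd}$, Dirichlet/Neumann on $\Gamma_{\rm en}^-$ and $\Gamma^{L,f}_{\rm ex}$, and coefficients that are $O(\delta)+O(\sigma)$ perturbations of constant-coefficient operators; crucially the right-hand sides $\mbox{div}(H(S_\ast,\mathbf q_\ast)\mathbf q_\ast)$ and $G(S_\ast,\Lambda_\ast,\dots)\mathbf e_\theta$ involve only the \emph{fixed} data $\mathcal W_\ast\in\mathcal P(M_1)$ and the previous iterate, so they are controlled by $M_1\sigma$. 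Here one must be careful that the $\psi$-equation $-\Delta(\psi\mathbf e_\theta)$ is the singular-coefficient operator $\partial_{rr}+\tfrac1r\partial_r-\tfrac1{r^2}+\partial_{xx}$ near $r=0$; as announced before the statement, I would invoke the weighted-Schauder solvability machinery for this operator developed in \cite{bae20183}, which gives a unique solution with the estimate $\|\psi\mathbf e_\theta\|_{2,\alpha}\le C(\|G\mathbf e_\theta\|_{0,\alpha}+\|\mathcal B\|_{1,\alpha,\Gamma_{\rm cd}})$ and which forces $\psi(x,0)=0$ automatically from the compatibility condition \eqref{natural-comp-cond}.

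Second, with $(\varphi,\psi)$ in hand for the frozen $f$, I would define the updated free boundary $\hat f$ by solving the ODE \eqref{g-free-cut}, namely $\hat f{}'(x)=\big(\mathbf q\cdot\mathbf e_r/\mathbf q\cdot\mathbf e_x\big)(x,f(x),0)$ with $\hat f(0)=\tfrac12$. Since the estimate \eqref{nonvan-x-vel}-type bound $\mathbf q\cdot\mathbf e_x\ge \tfrac12 u_0$ holds once $\sigma$ is small (it follows from $\|\varphi-\varphi_0\|_{2,\alpha}+\|\psi\mathbf e_\theta\|_{2,\alpha}\le C(M_1+1)\sigma$ and the smallness of the linearization), the right-hand side of the ODE is $C^{1,\alpha}$ in $x$ and is $O((M_1+1)\sigma)$; integrating gives $\|\hat f-\tfrac12\|_{2,\alpha,(0,L)}\le C(M_1+1)\sigma$, with the constant independent of $L$ because the bound is pointwise in $x$ and the $C^{1,\alpha}$ norm of the integrand is controlled uniformly. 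Choosing $C_\ast\ge 2C$ makes $\hat f\in\Sigma_\delta$, so the map $\mathcal T:f\mapsto\hat f$ sends $\Sigma_\delta$ into itself.

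Third, I would show $\mathcal T$ is a contraction on $\Sigma_\delta$ in the weaker norm $C^{1,\alpha/2}([0,L])$ (keeping the $C^{2,\alpha}$ ball closed and convex so the Schauder fixed-point theorem applies as an alternative if the contraction constant is borderline). Given $f_1,f_2\in\Sigma_\delta$, the difference of the flattened elliptic problems has right-hand sides and coefficients differing by $O((M_1+1)\sigma)\|f_1-f_2\|_{1,\alpha/2}$ (the $\mathcal W_\ast$-dependent source is the \emph{same}, so only the domain-flattening perturbation and the boundary functions $\mathcal B(f_i,f_i',S_\ast,\Lambda_\ast)$ contribute the difference), hence $\|(\varphi_1,\psi_1)-(\varphi_2,\psi_2)\|_{2,\alpha/2}\le C(M_1+1)\sigma\|f_1-f_2\|_{1,\alpha/2}$ by the linear estimates; feeding this into the ODE \eqref{g-free-cut} and using Gronwall gives $\|\hat f_1-\hat f_2\|_{2,\alpha/2}\le C(M_1+1)\sigma\|f_1-f_2\|_{1,\alpha/2}$. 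For $\sigma\le\sigma_5$ small (depending on the data, $\alpha$, $M_1$) the factor $C(M_1+1)\sigma<1$, so $\mathcal T$ has a unique fixed point $f$; the corresponding $(f,\varphi,\psi)$ solves Problem \ref{Prob4-Fix-S}, and \eqref{3D-pps-est} is exactly the bound established in the self-mapping step. Uniqueness for Problem \ref{Prob4-Fix-S} itself follows by the same difference estimate applied to any two solutions with their respective free boundaries.

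The main obstacle is the elliptic estimate for the $\psi$-equation near the axis $r=0$: the operator degenerates there, the natural function space is the weighted Hölder space adapted to $\psi\mathbf e_\theta$ rather than $\psi$ itself, and one must check that the oblique boundary condition $\tfrac1r\nabla(r\psi)\cdot\mathbf n_f=\mathcal B(f,f',S_\ast,\Lambda_\ast)$ on $\Gamma_{\rm cd}^{L,f}$ is compatible with this space and produces an $L$-independent constant. I expect this to be handled by citing \cite[Section on the $\psi$-equation]{bae20183} essentially verbatim, the only genuinely new point being the $f$-dependence of the conormal boundary operator and of $\mathcal B$, which is why the contraction argument must be run in the lower norm $C^{1,\alpha/2}$ — one loses a derivative and a bit of Hölder exponent when differentiating the flattening map, and that loss has to be absorbed by the smallness of $\sigma$.
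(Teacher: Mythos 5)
Your overall architecture (outer iteration on the free boundary $f$, inner solution of the two elliptic problems for $\varphi$ and $\psi\mathbf{e}_\theta$ with the data $\mathcal W_\ast$ frozen, Schauder or contraction to close) matches the paper, and your treatment of the fixed-boundary elliptic step is essentially the paper's Lemma on \eqref{Fixed-BVP}: the paper also works with the vector $\mathbf W=\psi\mathbf e_\theta$ to avoid the singular coefficient at $r=0$, gets $L$-independent $C^0$ bounds from explicit barriers in $x_2$, and upgrades to $C^{2,\alpha}$ by reflection, adapting \cite{bae20183}. So that part of your plan is sound.

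The genuine gap is in how you update the free boundary. You define $\hat f$ by integrating the ODE \eqref{g-free-cut}, $\hat f(x)-\tfrac12=\int_0^x(\mathbf q\cdot\mathbf e_r/\mathbf q\cdot\mathbf e_x)(t,f(t),0)\,dt$, and claim $\|\hat f-\tfrac12\|_{2,\alpha,(0,L)}\le C(M_1+1)\sigma$ with $C$ independent of $L$ ``because the bound is pointwise in $x$.'' It is not: the integrand is only $O((M_1+1)\sigma)$ pointwise, so the $C^0$ norm of $\hat f-\tfrac12$ picks up a factor $L$, and your Gronwall step in the contraction argument likewise produces $L$-dependent (indeed exponentially growing) constants. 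Since the lemma requires all constants to be uniform in $L$ (this uniformity is what makes the later limit $L\to\infty$ work), the self-map and contraction properties both fail for large $L$ as written. The paper's way around this is to replace the pointwise ODE by its conserved-quantity form: using the continuity equation $\operatorname{div}(H(S_\ast,\mathbf q_\ast)\mathbf q_\ast)=0$, the kinematic condition is equivalent to constancy in $x$ of the mass flux $\int_0^{f(x)}t\,\rho^\ast\mathbf u^\ast\cdot\mathbf e_x(x,t)\,dt$, and the updated boundary is defined by the identity \eqref{3D-f-est1}, i.e.\ explicitly by \eqref{3D-f-est3} as a square root of $f_\ast^2(x)$ plus a difference of two cross-sectional integrals at fixed $x$. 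This makes both the self-map bound \eqref{3D-f-est7} and, in the uniqueness step, the $C^0$ estimate of $f^{(1)}-f^{(2)}$ (via \eqref{f12-rhou}) genuinely local in $x$ and hence $L$-independent. Without this reformulation (or some substitute giving decay of $u_r$ along the free boundary, which is not available at this stage), your iteration does not close.
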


We will prove this lemma in Section \ref{subsection_4_2}.
Once Lemma \ref{Lem-S-free} is proved, we prove Proposition \ref{3D-Prop4.1} by the following approach:
Let $(f,\varphi,\psi)$ be the unique solution of Problem \ref{Prob4-Fix-S} for a fixed $\mathcal{W}_{\ast}=(S_{\ast},\Lambda_{\ast})\in\mathcal{P}(M_1)$.
For such a solution, we find a unique solution $\mathcal{W}=(S,\Lambda)$ of the following initial value problem:
\begin{equation}\label{Ite-3D2}
\left\{\begin{split}
	\left.\begin{split}
	&H(S_{\ast},{\bf q}(r,\psi,D\psi,D\varphi,\Lambda_{\ast})){\bf q}(r,\psi,D\psi,D\varphi,\Lambda_{\ast})\cdot\nabla S=0\\
	&H(S_{\ast},{\bf q}(r,\psi,D\psi,D\varphi,\Lambda_{\ast})){\bf q}(r,\psi,D\psi,D\varphi,\Lambda_{\ast})\cdot\nabla \Lambda=0
	\end{split}\right.\quad&\mbox{in}\quad\mathcal{N}_{L,f}^-,\\
(S,\Lambda)=(S_{\rm en},r\nu_{\rm en})\quad&\mbox{on}\quad\Gamma_{\rm en}^-.
\end{split}\right.
\end{equation}
We take suitable extensions $\mathcal{E}_f(S,\Lambda)\in [C^{1,\alpha/2}(\overline{\mathcal{N}_{L,3/4}^-})]^2$ of $(S,\Lambda)$.
For such $\mathcal{E}_f(S,\Lambda)$, we define an iteration mapping $\mathcal{J}:\mathcal{P}(M_1)\rightarrow [C^{1,\alpha/2}(\overline{\mathcal{N}_{L,3/4}^-})]^2$ by
\begin{equation*}
\mathcal{J}(S_{\ast},\Lambda_{\ast})=\mathcal{E}_f(S,\Lambda).
\end{equation*}
Then we choose $M_1$ and $\sigma$ so that the mapping $\mathcal{J}$ maps $\mathcal{P}(M_1)$ into itself, and has a unique fixed point  $(S_{\sharp},\Lambda_{\sharp})\in\mathcal{P}(M_1)$ of $\mathcal{J}$. This will prove Proposition \ref{3D-Prop4.1}. A detailed proof of Proposition \ref{3D-Prop4.1} is given in Section \ref{subsection_4_3}.

\subsection{Proof of Lemma \ref{Lem-S-free} }
\label{subsection_4_2}

For a constant $M_2>0$ to be determined later with satisfying $M_2\sigma\le \frac{1}{8}$, we define an iteration set
\begin{equation}\label{F-set}
\mathcal{F}(M_2):=\left\{f\in C^{2,\alpha}([0,L]):
\begin{split}
&\|f-\frac{1}{2}\|_{2,\alpha,(0,L)}\le M_2\sigma, \\
&f(0)=\frac{1}{2},\,f'(0)=f'(L)=0
\end{split}\right\}.
\end{equation}
We fix $f_{\ast}\in\mathcal{F}(M_2)$, and solve the following boundary value problem in $\mathcal{N}_{L,f_{\ast}}^-$:
\begin{equation}\label{Fixed-BVP}
\left\{\begin{split}
	\left.\begin{split}
	&\mbox{div}\left(H(S_{\ast},{\bf q}(r,\psi,D\psi,D\varphi,\Lambda_{\ast})){\bf q}(r,\psi,D\psi,D\varphi,\Lambda_{\ast})\right)=0\\
	&-\Delta(\psi{\bf e}_{\theta})=G(S_{\ast},\Lambda_{\ast},\partial_r S_{\ast},\partial_r\Lambda_{\ast},{\bf t}(r,\psi,D\psi,\Lambda_{\ast}),D\varphi){\bf e}_{\theta}\\
	\end{split}\right.\quad&\mbox{in}\quad\mathcal{N}_{L,f_{\ast}}^-,\\
\varphi=\varphi_{\rm en},\quad\partial_x\psi=0\quad&\mbox{on}\quad\Gamma_{\rm en}^-,\\
\partial_r\varphi=0,\quad\partial_x\psi=0\quad&\mbox{on}\quad\Gamma^{L,f_{\ast}}_{\rm ex},\\
\nabla\varphi\cdot{\bm\tau}_{f_{\ast}}=\nabla\varphi_0\cdot{\bm\tau}_{f_\ast},\quad\frac{1}{r}\nabla(r\psi)\cdot{\bf n}_{f_\ast}=\mathcal{B}(f_{\ast},f'_{\ast},S_{\ast},\Lambda_{\ast})\quad&\mbox{on}\quad\Gamma_{\rm cd}^{L,f_{\ast}},
\end{split}\right.
\end{equation}
where $H$, $G$, and $\mathcal{B}$ are given by \eqref{def-H-G} and \eqref{def-varphi0-B}.

\begin{lemma} \label{Pro-fix-S}
Under the same assumptions on $(S_{\rm en},\nu_{\rm en},u_r^{\rm en})$ as in Proposition \ref{3D-Prop4.1},
there exists a small constant $\sigma_6>0$ depending only on the data and $(\alpha, M_1, M_2)$ so that if
\begin{equation*}
\sigma\le\sigma_6,
\end{equation*}
then the nonlinear boundary value problem \eqref{Fixed-BVP} has a unique solution $(\varphi,\psi)$ satisfying
\begin{equation}\label{fix-est}
\|\varphi-\varphi_0\|_{2,\alpha,\mathcal{N}_{L,f_{\ast}}^-}+\|\psi{\bf e}_{\theta}\|_{2,\alpha,\mathcal{N}_{L,f_{\ast}}^-}\le C\left(M_1+1\right)\sigma,
\end{equation}
where the constant $C>0$ depends only on the data and $\alpha$ but independent of $L$.
\end{lemma}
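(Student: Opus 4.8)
The plan is to solve the nonlinear boundary value problem \eqref{Fixed-BVP} by a fixed-point argument, linearizing around the background state $(\varphi_0,0)$ and exploiting that $\mathcal{N}_{L,f_\ast}^-$ is a small perturbation of the cylinder $\{0<x<L,\ 0<r<1/2\}$. First I would flatten the free boundary: introduce the change of variables $(x,r)\mapsto(x,\frac{r}{2f_\ast(x)})$, which maps $\mathcal{N}_{L,f_\ast}^-$ onto the fixed rectangle $\{0<x<L,\ 0<r<1/2\}$, rotationally symmetric about the $x$-axis. Under this map the two equations in \eqref{Fixed-BVP} become, respectively, a second-order elliptic equation for $\varphi$ and the singular-coefficient elliptic equation for $\psi\mathbf{e}_\theta$ (the operator $-\Delta(\psi\mathbf{e}_\theta)$ has the term $-\partial_{rr}\psi-\frac1r\partial_r\psi+\frac{1}{r^2}\psi$ whose lower-order coefficient blows up at $r=0$), with coefficients that are $O(M_2\sigma)$-close to those of the constant-coefficient operators because $\|f_\ast-\tfrac12\|_{2,\alpha}\le M_2\sigma$. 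The Bernoulli relation \eqref{def-H-G} for $H$ makes the $\varphi$-equation quasilinear; I would treat it as in \cite{bae20183}, noting that subsonicity of the background ($u_0<c_0$) guarantees uniform ellipticity for $|D\varphi-D\varphi_0|$ small.

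Next I would set up the iteration: fix $(\bar\varphi,\bar\psi)$ in a ball $\{\|\varphi-\varphi_0\|_{2,\alpha}+\|\psi\mathbf{e}_\theta\|_{2,\alpha}\le C_\ast(M_1+1)\sigma\}$, freeze the nonlinear coefficients $H(S_\ast,{\bf q}(r,\bar\psi,D\bar\psi,D\bar\varphi,\Lambda_\ast))$ and the right-hand side $G(S_\ast,\Lambda_\ast,\partial_rS_\ast,\partial_r\Lambda_\ast,{\bf t}(r,\bar\psi,D\bar\psi,\Lambda_\ast),D\bar\varphi)$, and solve the resulting two linear problems. The $\varphi$-problem is a mixed Dirichlet (on $\Gamma_{\rm en}^-$) -- oblique/Neumann ($\partial_r\varphi=0$ on $\Gamma_{\rm ex}^{L,f_\ast}$, $\nabla\varphi\cdot{\bm\tau}_{f_\ast}=\nabla\varphi_0\cdot{\bm\tau}_{f_\ast}$ on $\Gamma_{\rm cd}^{L,f_\ast}$) boundary value problem for a uniformly elliptic operator on a Lipschitz-but-$C^{2,\alpha}$ domain; existence and the $C^{2,\alpha}$ estimate $\|\varphi-\varphi_0\|_{2,\alpha}\le C(\|\text{data}\|+\|\text{source}\|)\le C(M_1+1)\sigma$, uniform in $L$, follow from standard Schauder theory together with a uniform-in-$L$ argument (a Poincar\'e/energy bound plus interior-and-boundary Schauder estimates localized on unit-length slabs, using the translation structure in $x$). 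For $\psi\mathbf{e}_\theta$ I would invoke exactly the machinery developed in \cite{bae20183} for the singular-coefficient operator: writing $\psi=r\Phi$ recasts $-\Delta(\psi\mathbf{e}_\theta)=\tilde G\mathbf{e}_\theta$ as a non-singular equation for $\Phi$ in the meridian rectangle, for which the mixed boundary conditions $\partial_x\psi=0$ on $\Gamma_{\rm en}^-\cup\Gamma_{\rm ex}^{L,f_\ast}$ and $\frac1r\nabla(r\psi)\cdot{\bf n}_{f_\ast}=\mathcal{B}$ on $\Gamma_{\rm cd}^{L,f_\ast}$ translate into regular mixed conditions, yielding a uniform $\|\psi\mathbf{e}_\theta\|_{2,\alpha}\le C(M_1+1)\sigma$ bound with $C$ independent of $L$.

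Then I would check that, for $\sigma$ small depending on $(M_1,M_2,\alpha)$ and the data, the map $(\bar\varphi,\bar\psi)\mapsto(\varphi,\psi)$ sends the ball into itself: the frozen coefficients differ from the constant-coefficient ones by $O((M_1+M_2+1)\sigma)$, so the source terms and the perturbation of the principal part are controlled, and choosing $C_\ast$ larger than the Schauder constant $C$ closes the estimate. Compactness of the map in a weaker norm ($C^{1,\alpha/2}$) plus the a priori bound gives a fixed point by Schauder's fixed-point theorem; uniqueness follows by subtracting two solutions, using the smallness of the Lipschitz constants of $H$, $G$, $\mathcal{B}$ in their arguments together with the linear energy estimate, again uniformly in $L$. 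The main obstacle I anticipate is the $\psi$-equation near $r=0$: the naive Schauder estimate degenerates at the axis because of the $r^{-2}$ coefficient, so one must carry out the estimate at the level of $\psi\mathbf{e}_\theta$ as a vector field (equivalently for $\Phi=\psi/r$), and verify the compatibility condition $\mathcal{V}(x,0)=0$ is preserved — this is precisely where the "more subtle approach" over the two-dimensional case of \cite{bae2018contact} enters, and where I would lean on the adapted computations from \cite{bae20183}.
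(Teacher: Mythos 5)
Your overall strategy (freeze the nonlinear coefficients, solve two linear problems, close by a fixed-point argument) is the same as the paper's, but there are two concrete gaps. First, the boundary condition $\nabla\varphi\cdot{\bm\tau}_{f_\ast}=\nabla\varphi_0\cdot{\bm\tau}_{f_\ast}$ on $\Gamma_{\rm cd}^{L,f_\ast}$ is \emph{not} an oblique or Neumann condition: ${\bm\tau}_{f_\ast}$ is tangent to the surface $r=f_\ast(x)$, so this prescribes only a tangential derivative, and the Schauder theory for oblique-derivative problems (which requires a nonvanishing normal component of the boundary vector field) does not apply. The correct move, and the one the paper makes, is to integrate the condition along the boundary: it forces $\phi=\varphi-\varphi_0$ to be constant on $\Gamma_{\rm cd}^{L,f_\ast}$, and since $\varphi_{\rm en}(\tfrac12)=0$ the constant is $0$, so the $\varphi$-problem is actually a pure Dirichlet problem ($\phi=\varphi_{\rm en}$ on $\Gamma_{\rm en}^-$, $\phi=0$ on $\Gamma_{\rm cd}^{L,f_\ast}\cup\Gamma_{\rm ex}^{L,f_\ast}$). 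Without this reduction your linear step for $\varphi$ is not well-posed in the form you state it.

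Second, your mechanism for uniformity in $L$ is too thin. A Poincar\'e/energy bound does not give an $L$-independent $C^0$ estimate here, because $\|\mathfrak{F}\|_{L^2(\mathcal{N}_{L,f_\ast}^-)}$ grows like $\sqrt{L}$ for a source that is merely bounded; the paper instead builds explicit barrier functions depending only on the transverse variable (one for the Dirichlet problem for $\phi_{\rm hom}$, and one, $\mathfrak{N}$, for the Robin problem for ${\bf W}=\psi{\bf e}_\theta$, where the favorable sign $\mu<0$ is used together with Hopf's lemma) to get $C^0$ bounds independent of $L$, and only then applies local Schauder estimates. Relatedly, you never address regularity up to the edges where $\Gamma_{\rm en}^-$ and $\Gamma_{\rm ex}^{L,f_\ast}$ meet $\Gamma_{\rm cd}^{L,f_\ast}$: "standard Schauder theory" does not give $C^{2,\alpha}$ up to these corners. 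The paper obtains it by an even reflection across $x=0$ and $x=L$, which is legitimate precisely because of the compatibility conditions built into the iteration sets ($f_\ast'(0)=f_\ast'(L)=0$, $\partial_x\psi=0$ and $\partial_x^k\phi=0$, $k=0,2$, on the entrance and exit, $\partial_x S_\ast=\partial_x\Lambda_\ast=0$ there). Your treatment of the axis singularity via ${\bf W}=\psi{\bf e}_\theta$ (or $\Phi=\psi/r$) is consistent with the paper; and your use of the Schauder fixed-point theorem in place of the paper's contraction mapping is workable but then requires the separate uniqueness argument you only sketch, whereas the contraction gives uniqueness for free.
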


Hereafter, we regard any estimate constant $C$ to be chosen depending only on the data and $\alpha$ but independent of $L$ unless specified otherwise.

\begin{proof}
{\bf 1.} {\emph{(Iteration set)}} For two constants $M_3, M_4>0$ to be determined later,  let us define
\begin{equation*}
\left.
\begin{split}
&\mathcal{K}_1^{f_\ast}(M_3):=\left\{\phi(x,r)\in C^{2,\alpha}(\overline{\mathcal{N}_{L,f_\ast}^-}):\left.
\begin{split}
&\|\phi\|_{2,\alpha,\mathcal{N}_{L,f_\ast}^-}\le M_3\sigma,\\
&\partial_x^k\phi\equiv0\mbox{ on }(\Gamma_{\rm en}^-\cap\{r\ge\frac{1}{2}-\epsilon\})\cup\Gamma^{L,f_{\ast}}_{\rm ex}\\
&\mbox{\quad for }k=0,2,\\
&\phi(x,0)=0, \forall x\in[0,L]
\end{split}\right.\right\},\\
&\mathcal{K}_2^{f_\ast}(M_4):=\left\{\psi(x,r)\in C^{2,\alpha}(\overline{\Omega_{L,f_\ast}^-}):
\begin{split}
& \|\psi\|_{2,\alpha,\Omega_{L,f_{\ast}}^-}\le M_4\sigma,\\
&\partial_x\psi\equiv0\mbox{ on }\Gamma_{\rm en}^-\cup\Gamma^{L,f_{\ast}}_{\rm ex},\\
&\partial_r^k \psi(x,0)=0\mbox{ for }k=0,2, \forall x\in[0,L]
\end{split}\right\},
\end{split}\right.
\end{equation*}
for a two dimensional set $\Omega_{L,f_{\ast}}^-$ given by
\begin{equation*}
\Omega_{L,f_{\ast}}^-:=\left\{(x,r)\in\mathbb{R}^2:0<x<L,\mbox{ }0<r<f_{\ast}(x)\right\}.
\end{equation*}
Then, we define an iteration set of $(\phi, \psi)$ as
\begin{equation}\label{ell-set}
\mathcal{K}^{f_\ast}(M_3,M_4):=\mathcal{K}_1^{f_\ast}(M_3)\times\mathcal{K}_2^{f_\ast}(M_4).
\end{equation}
Note that the iteration set $\mathcal{K}_2^{f_\ast}(M_4)$ is defined through the norm $ \|\cdot\|_{2,\alpha,\Omega_{L,f_{\ast}}^-}$. This is to find an axisymmetric solution $\psi(x,r)$ to the equation
$-\Delta (\psi {\bf e}_{\theta})=G{\bf e}_{\theta}$ given in \eqref{Fixed-BVP}, and to make the function $\psi(x,r){\bf e}_{\theta}$ become $C^2$ in $\mathcal{N}_{L,f_\ast}^-$.

{\bf 2.} {\emph{(Linearized boundary value problem for $\psi{\bf e}_{\theta}$)}} For a fixed $(\tilde{\phi},\tilde{\psi})\in\mathcal{K}^{f_\ast}(M_3,M_4)$, set
 \begin{equation}\label{def-GB}
 \tilde{G}:=G(\mathcal{W}_{\ast},\partial_r\mathcal{W}_{\ast},{\bf t}(r,\tilde{\psi},D\tilde{\psi},\Lambda_{\ast}),D\tilde{\phi}+D\varphi_0),\quad \tilde{\mathcal{B}}:=\mathcal{B}(f_{\ast},f_{\ast}',\mathcal{W}_{\ast}),
 \end{equation}
 where $G$ and $\mathcal{B}$ are given by \eqref{def-H-G} and \eqref{def-varphi0-B}, respectively.
 The compatibility condition $\partial_r\mathcal{W}_{\ast}\equiv {\bf 0}$ on $\mathcal{N}_{L,f_{\ast}}^-\cap\{r=0\}$ implies that $\tilde{G}\equiv 0$ on $\mathcal{N}_{L,f_{\ast}}^-\cap\{r=0\}$ and
 $\tilde{G}{\bf e}_{\theta}\in C^{\alpha}(\overline{\mathcal{N}_{L,f_{\ast}}^-})$.
 Since $f_{\ast}\ge\frac{3}{8}>0$, ${\bf e}_{\theta}$ is smooth on $\Gamma_{\rm cd}^{L,f_{\ast}}$ and $\tilde{\mathcal{B}}{\bf e}_{\theta}\in C^{1,\alpha}(\Gamma_{\rm cd}^{L,f_{\ast}})$.
Then the standard elliptic theory yields that the linear boundary value problem
\begin{equation}\label{lin-psi}
\left\{\begin{split}
-\Delta{\bf W}=\tilde{G}{\bf e}_{\theta}\quad&\mbox{in}\quad\mathcal{N}_{L,f_\ast}^-,\\
\partial_x{\bf W}=0\quad&\mbox{on}\quad\Gamma_{\rm en}^-\cup\Gamma^{L,f_{\ast}}_{\rm ex},\\
\nabla{\bf W}\cdot{\bf n}_{f_\ast}-\mu(x){\bf W}=\tilde{\mathcal{B}}{\bf e}_{\theta}\quad&\mbox{on}\quad\Gamma_{\rm cd}^{L,f_\ast},
\end{split}\right.
\end{equation}
for $\mu$ defined by
$$\mu(x):=\frac{-1}{f_{\ast}(x)\sqrt{1+|f'_{\ast}(x)|^2}},$$
has a unique solution ${\bf W}\in C^{1,\alpha}(\overline{\mathcal{N}_{L,f_\ast}^-})\cap C^{2,\alpha}({\mathcal{N}_{L,f_{\ast}}^-})$.
By adjusting the proof of \cite[Proposition 3.3]{bae20183}, one can show that  ${\bf W}$ is represented as
$${\bf W}=\psi(x,r){\bf e}_{\theta}\quad\mbox{in}\quad \overline{\mathcal{N}_{L,f_{\ast}}^-},$$
where $\psi$ solves the boundary value problem
\begin{equation}\label{3D-psi}
-\left(\partial_{xx}+\frac{1}{r}\partial_r(r\partial_r)-\frac{1}{r^2}\right)\psi=\tilde{G}\quad\mbox{in}\quad\mathcal{N}_{L,f_{\ast}}^-
\end{equation}
with boundary conditions
\begin{equation}\label{3D-psi-BC}\left\{
\begin{split}
-\partial_x\psi=0\quad&\mbox{on}\quad\Gamma_{\rm en}^-,\\
\partial_x\psi=0\quad&\mbox{on}\quad\Gamma^{L,f_{\ast}}_{\rm ex},\\
\frac{1}{r}\nabla(r\psi)\cdot{\bf n}_{f_{\ast}}=\tilde{\mathcal{B}}\quad&\mbox{on}\quad\Gamma_{\rm cd}^{L,f_\ast},\\
\psi=0\quad&\mbox{on}\quad\mathcal{N}_{L,f_\ast}^-\cap\{r=0\}.
\end{split}\right.
\end{equation}
By taking the limit $r\rightarrow 0+$ to the equation \eqref{3D-psi} and using L'Hospital's rule, one can also check that
$$\partial_{rr}\psi\equiv0\quad\mbox{on}\quad\mathcal{N}_{L,f_\ast}^-\cap\{r=0\}.$$

{\bf Claim:}  Regarding $\psi$ as a function of $(x,r)\in\Omega_{L,f_{\ast}}^-$, we have
 \begin{equation}\label{psi-est-2}
 \begin{split}
 \|\psi\|_{k,\alpha,\Omega_{L,f_{\ast}}^-}
 &\le C\left(\|\tilde{G}\|_{\alpha,\mathcal{N}_{L,f_\ast}^-}+\|\tilde{\mathcal{B}}\|_{k-1,\alpha,\Gamma_{\rm cd}^{L,f_\ast}}\right)\quad\mbox{for } k=1,2.\\
\end{split}
\end{equation}

\begin{proof}[Proof of Claim.]
Since $\psi={\bf W}\cdot{\bf e}_{\theta}$, and ${\bf e}_{\theta}$ is smooth with respect to $(x,r,\theta)$, we have
\begin{equation}\label{psi-W-est}
\|\psi\|_{k,\alpha,\Omega_{L,f_{\ast}}^-}\le C\|{\bf W}\|_{k,\alpha,\mathcal{N}_{L,f_{\ast}}^-}\quad\mbox{for }k=1,2.
\end{equation}

Now we show that ${\bf W}$ satisfies
\begin{equation*}
\begin{split}
&\|{\bf W}\|_{k,\alpha,\mathcal{N}_{L,f_\ast}^-}\le C\left(\|\tilde{G}{\bf e}_{\theta}\|_{\alpha,\mathcal{N}_{L,f_\ast}^-}+\|\tilde{\mathcal{B}}{\bf e}_{\theta}\|_{k-1,\alpha,\Gamma_{\rm cd}^{L,f_\ast}}\right)\quad\mbox{for }k=1,2.
\end{split}
\end{equation*}
Define a function $\mathfrak{N}:\overline{\mathcal{N}_{L,f_\ast}^-}\rightarrow\mathbb{R}^+$ by
$$\mathfrak{N}(x_1,x_2,x_3):={2\mathfrak{a}}\left(-x_2^2+5\right)\quad\mbox{for}\quad \mathfrak{a}:=\|\tilde{G}{\bf e}_{\theta}\|_{0,\mathcal{N}_{L,f_\ast}^-}+\|\tilde{\mathcal{B}}{\bf e}_{\theta}\|_{0,\Gamma_{\rm cd}^{L,f_\ast}}.$$
Since $\|f_{\ast}-\frac{1}{2}\|_{2,\alpha,(0,L)}\le \frac{1}{8}$, we have
\begin{equation}\label{N-est}
\begin{split}
\nabla\mathfrak{N}\cdot{\bf n}_{f_{\ast}}-\mu(x)\mathfrak{N}
&=\frac{2\mathfrak{a}}{\sqrt{1+|f_{\ast}'(x_1)|^2}}\left(-2x_2\cos\theta+\frac{-x_2^2+5}{f_{\ast}(x_1)}\right)\\
&\ge \frac{2\mathfrak{a}}{2}\left(-2+4\right)= 2\mathfrak{a}\quad\mbox{on}\quad\Gamma_{\rm cd}^{L,f_\ast}.
\end{split}
\end{equation}
Set
\begin{equation*}
W_j:={\bf W}\cdot{\bf e}_j,\quad G_j:=\tilde{G}{\bf e}_{\theta}\cdot{\bf e}_j,\quad B_j:=\tilde{\mathcal{B}}{\bf e}_{\theta}\cdot{\bf e}_j\quad\mbox{for }j=1,2,3.
\end{equation*}
Here, each ${\bf e}_j$ for $j=1,2,3$ denotes the unit vector in the positive direction of $x_j$-axis for ${\bf x}=(x_1,x_2,x_3)\in\overline{\mathcal{N}_{L,f_{\ast}}^-}$.
Then straightforward computations and \eqref{N-est} yield that
\begin{equation*}
\left\{\begin{split}
\Delta(\mathfrak{N}\pm W_j)=-4\mathfrak{a}\pm{G}_j \le 0\quad&\mbox{in}\quad\mathcal{N}_{L,f_\ast}^-,\\
\partial_x(\mathfrak{N}\pm W_j)=0\quad&\mbox{on}\quad\Gamma_{\rm en}^-\cup\Gamma^{L,f_{\ast}}_{\rm ex},\\
\nabla(\mathfrak{N}\pm W_j)\cdot{\bf n}_{f_{\ast}}-\mu(x)(\mathfrak{N}\pm W_j)\ge 2\mathfrak{a}\pm B_j\ge 0\quad&\mbox{on}\quad\Gamma_{\rm cd}^{L,f_\ast}.
\end{split}\right.
\end{equation*}
By the comparison principle and Hopf's lemma, we have
$$-\mathfrak{N}\le W_j\le\mathfrak{N}\quad\mbox{for }j=1,2,3\quad\mbox{in}\quad\mathcal{N}_{L,f_{\ast}}^-.$$
Therefore we get the estimate
\begin{equation}
\label{W-estimate-C0}
\|{\bf W}\|_{0,\mathcal{N}_{L,f_\ast}^-}\le C\left(\|\tilde{G}{\bf e}_{\theta}\|_{0,\mathcal{N}_{L,f_\ast}^-}+\|\tilde{\mathcal{B}}{\bf e}_{\theta}\|_{0,\Gamma_{\rm cd}^{L,f_\ast}}\right).
\end{equation}
By adjusting the proof of \cite[Theorem 3.13]{han2011elliptic} with using the $C^0$-estimate given right above, we obtain the estimate
\begin{equation}\label{W-est-1}
\|{\bf W}\|_{1,\alpha,\mathcal{N}_{L,f_\ast}^-}\le C\left(\|\tilde{G}{\bf e}_{\theta}\|_{\alpha,\mathcal{N}_{L,f_\ast}^-}+\|\tilde{\mathcal{B}}{\bf e}_{\theta}\|_{\alpha,\Gamma_{\rm cd}^{L,f_\ast}}\right).
\end{equation}
To obtain $C^{2,\alpha}$-estimate of ${\bf W}$ up to the boundary,
we use the method of reflection.
Define an extension of $f_{\ast}\in\mathcal{F}(M_2)$ into $-1\le x\le L+1$ by
\begin{equation*}
f_{\ast}^e(x):=\left\{\begin{split}
f_{\ast}(-x)\quad&\mbox{for }-1\le x<0,\\
f_{\ast}(x)\quad&\mbox{for }0\le x\le L,\\
f_{\ast}(2L-x)\quad&\mbox{for }L< x\le L+1.
\end{split}\right.
\end{equation*}
Since $f_{\ast}'(0)=f_{\ast}'(L)=0$, we have the estimate
$$\|f_{\ast}^e\|_{2,\alpha,(-1,L+1)}\le C\|f_{\ast}\|_{2,\alpha,(0,L)}.$$
We define an extended domain
\begin{equation*}\label{3D-N-ext}
\mathcal{N}_{\rm ext}:=\left\{(x_1,x_2,x_3)\in\mathbb{R}^3: -1<x_1<L+1,\mbox{ } 0\le \sqrt{x_2^2+x_3^2}< f^e_{\ast}(x_1)\right\}
\end{equation*}
and
$$\Gamma_{\rm ext}:=\partial\mathcal{N}_{\rm ext}\cap\left\{\sqrt{x_2^2+x_3^2}=f_{\ast}^e(x_1)\right\}.$$
We also define extensions of $({\bf W}, \tilde{G}, \tilde{\mathcal{B}})$ into $\mathcal{N}_{\rm ext}$ as follows:
\begin{equation*}
\begin{split}
	\left({\bf W}_{\rm ext},\mathfrak{G}_{\rm ext},\mathfrak{B}_{\rm ext}\right)({\bf x})
	&:=\left\{\begin{split}
	\left({\bf W},\tilde{G},\tilde{\mathcal{B}}\right)(-x_1,x_2,x_3)\quad&\mbox{for }-1\le x_1<0,\\
	\left({\bf W},\tilde{G},\tilde{\mathcal{B}}\right)(x_1,x_2,x_3)\quad&\mbox{for }0\le x_1\le L,\\
	\left({\bf W},\tilde{G},\tilde{\mathcal{B}}\right)(2L-x_1,x_2,x_3)\quad&\mbox{for }L< x_1\le L+1,
	\end{split}\right.\\
	\end{split}
\end{equation*}
Then $\mathfrak{G}_{\rm ext}{\bf e}_{\theta}\in  C^{\alpha}(\overline{\mathcal{N}_{\rm ext}})$ and
\begin{equation*}
\|\mathfrak{G}_{\rm ext}{\bf e}_{\theta}\|_{\alpha,\mathcal{N}_{\rm ext}}\le C\|\tilde{G}{\bf e}_{\theta}\|_{\alpha,\mathcal{N}_{L,f_\ast}^-}.
\end{equation*}
By the compatibility conditions of  $(\mathcal{W}_{\ast}, f_{\ast})$ given in \eqref{Ent-Ang-set} and \eqref{F-set},
$$\nabla\mathfrak{B}_{\rm ext}\cdot{\bm \tau}_{f_\ast}\equiv0\quad\mbox{on}\quad\Gamma_{\rm ext}\cap\{x_1=0,L\}.$$
From this and the definition of $\mathfrak{B}_{\rm ext}$, we have the estimate
\begin{equation*}
\|\mathfrak{B}_{\rm ext}{\bf e}_{\theta}\|_{1,\alpha,\Gamma_{\rm ext}}\le C\|\tilde{\mathcal{B}}{\bf e}_{\theta}\|_{1,\alpha,\Gamma_{\rm cd}^{L,f_\ast}}.
\end{equation*}

Consider a connected subdomain $\mathcal{N}_l$ of $\mathcal{N}_{\rm ext}$ such that
\begin{equation*}
\mathcal{N}_{\rm ext}\cap\left\{-\frac{1}{2}\le x_1\le \frac{1}{2}\right\}\subset\mathcal{N}_l\subset\mathcal{N}_{\rm ext}\cap\left\{-1\le x_1\le 1\right\}
\end{equation*}
and the boundary $\partial\mathcal{N}_l$ is smooth.
By the standard elliptic theory, the boundary value problem
\begin{equation}\label{3D-MP}
\left\{\begin{split}
-\Delta\mathfrak{W}=\mathfrak{G}_{\rm ext}{\bf e}_{\theta}\quad&\mbox{in}\quad\mathcal{N}_{l},\\
\nabla\mathfrak{W}\cdot{\bf n}_{f_{\ast}^e}-\mu_{\rm ext}(x)\mathfrak{W}=\mathfrak{B}_{\rm ext}{\bf e}_{\theta}\quad&\mbox{on}\quad\partial\mathcal{N}_{l}\cap\{r=f^e_{\ast}(x)\},\\
\mathfrak{W}={\bf W}_{\rm ext}\quad&\mbox{on}\quad\partial\mathcal{N}_{l}\backslash\{r=f_{\ast}^e(x)\},
\end{split}\right.
\end{equation}
for
\begin{equation*}
{\bf n}_{f_{\ast}^e}:=\frac{-(f_{\ast}^e)'(x){\bf e}_x+{\bf e}_r}{\sqrt{1+|(f_{\ast}^e)'(x)|^2}},\quad \mu_{\rm ext}(x):=\frac{-1}{f_{\ast}^e(x)\sqrt{1+|(f_{\ast}^e)'(x)|^2}},
\end{equation*}
has a unique solution $\mathfrak{W}\in C^{2,\alpha}(\overline{\mathcal{N}_{l}})$ that satisfies
$$\|\mathfrak{W}\|_{2,\alpha,\mathcal{N}_{\rm ext}\cap\left\{-\frac{1}{2}\le x_1\le \frac{1}{2}\right\}}\le C\left(\|\mathfrak{G}_{\rm ext}{\bf e}_{\theta}\|_{\alpha,\mathcal{N}_{l}}+\|\mathfrak{B}_{\rm ext}{\bf e}_{\theta}\|_{1,\alpha,\partial\mathcal{N}_{l}\cap\{r=f_\ast^e(x)\}}+\|{\bf W}\|_{C^0(\overline{\mathcal{N}_{L,f_\ast}^-})}\right).$$
By the definitions of $(\mathfrak{G}_{\rm ext}, \mathfrak{B}_{\rm ext},{\bf W}_{\rm ext})$ and the uniqueness of a solution to \eqref{3D-MP}, we have
$\mathfrak{W}(x_1,x_2,x_3)=\mathfrak{W}(-x_1,x_2,x_3)$ and $\partial_{x_1}\mathfrak{W}(0,x_2,x_3)=0$.
The uniqueness of a solution to \eqref{lin-psi} yields that $\mathfrak{W}={\bf W}$ in $\mathcal{N}_l\cap\{x_1\ge 0\}.$
By combining \eqref{W-estimate-C0} and the $C^{2,\alpha}$-estimate of $\mathfrak{W}$ given right above, we obtain that
\begin{equation}\label{left-W}
\|{\bf W}\|_{2,\alpha,\mathcal{N}_{L,f_{\ast}}^-\cap\left\{0\le x_1\le \frac{1}{2}\right\}}\le C\left(\|\tilde{G}{\bf e}_{\theta}\|_{\alpha,\mathcal{N}_{L,f_\ast}^-}+\|\tilde{\mathcal{B}}{\bf e}_{\theta}\|_{1,\alpha,\Gamma_{\rm cd}^{L,f_\ast}}\right).
\end{equation}
One can also similarly check that
\begin{equation}\label{right-W}
\|{\bf W}\|_{2,\alpha,\mathcal{N}_{L,f_{\ast}}^-\cap\left\{L-\frac{1}{2}\le x_1\le L\right\}}\le C\left(\|\tilde{G}{\bf e}_{\theta}\|_{\alpha,\mathcal{N}_{L,f_\ast}^-}+\|\tilde{\mathcal{B}}{\bf e}_{\theta}\|_{1,\alpha,\Gamma_{\rm cd}^{L,f_\ast}}\right).
\end{equation}
It follows from \eqref{left-W}-\eqref{right-W} that
\begin{equation}\label{W-2-est}
\|{\bf W}\|_{2,\alpha,\mathcal{N}_{L,f_\ast}^-}\le C\left(\|\tilde{G}{\bf e}_{\theta}\|_{\alpha,\mathcal{N}_{L,f_\ast}^-}+\|\tilde{\mathcal{B}}{\bf e}_{\theta}\|_{1,\alpha,\Gamma_{\rm cd}^{L,f_\ast}}\right).
\end{equation}

Fix ${\bf x}=(x, {\bm \xi}), {\bf x}'=(x',{\bm \xi'})\in \mathcal{N}_{L,f_{\ast}}^-$ with $x,x'\in(0, L)$ and ${\bm\xi}, {\bm\xi}'\in B_1({\bf 0})(\subset \mathbb{R}^2)$.
Without loss of generality, we assume that $|{\bm \xi}'|\le |{\bm\xi}|$.
Since ${\bf e}_{\theta}$ depends only on the unit vector lying on $\partial B_1({\bf 0})\subset \mathbb{R}^2$, we have
\begin{equation*}
\begin{split}
\frac{|\tilde{G}{\bf e}_{\theta}({\bf x})-\tilde{G}{\bf e}_{\theta}({\bf x}')|}{|{\bf x}-{\bf x}'|^{\alpha}}
&\le \frac{|\tilde{G}({\bf x})-\tilde{G}({\bf x'})|}{|{\bf x}-{\bf x}'|^{\alpha}}
+\frac{|\tilde{G}({\bf x}')||{\bf e}_{\theta}(\frac{\bm \xi}{|\bm \xi|})-{\bf e}_{\theta}(\frac{\bm \xi'}{|\bm \xi'|})|}{|{\bm\xi}-{\bm\xi}'|^{\alpha}}\\
&\le \|\tilde{G}\|_{\alpha,\mathcal{N}_{L,f_\ast}^-}+
\frac{|\tilde{G}(x', {\bm\xi'})|}{|{\bm \xi'}|^{\alpha}}\frac{|{\bf e}_{\theta}(\frac{\bm \xi}{|\bm \xi|})-{\bf e}_{\theta}(\frac{\bm \xi'}{|\bm \xi'|})|}{\left|\frac{\bm \xi}{|\bm\xi|}-\frac{\bm \xi'}{|\bm \xi'|}\right|^{\alpha}}.
\end{split}
\end{equation*}
Due to the compatibility condition $\partial_r\mathcal{W}_{\ast}\equiv {\bf 0}$ on $\mathcal{N}_{L,f_{\ast}}^-\cap\{r=0\}$, we have $\tilde{G}(x',{\bf 0})=0$, and this yields that
\begin{equation*}
\frac{|\tilde{G}(x', {\bm\xi'})|}{|{\bm \xi'}|^{\alpha}}=
\frac{|\tilde{G}(x', {\bm\xi'})-\tilde{G}(x', {\bf 0})|}{|{\bm \xi'}|^{\alpha}}
\le \|\tilde{G}\|_{\alpha,\mathcal{N}_{L,f_\ast}^-}.
\end{equation*}
So we get
\begin{equation*}
\frac{|\tilde{G}{\bf e}_{\theta}({\bf x})-\tilde{G}{\bf e}_{\theta}({\bf x}')|}{|{\bf x}-{\bf x}'|^{\alpha}}
\le C\|\tilde{G}\|_{\alpha,\mathcal{N}_{L,f_\ast}^-},
\end{equation*}
from which it is obtained that
\begin{equation}\label{G-2-est}
\|\tilde{G}{\bf e}_{\theta}\|_{\alpha,\mathcal{N}_{L,f_\ast}^-}\le C\|\tilde{G}\|_{\alpha,\mathcal{N}_{L,f_\ast}^-}.
\end{equation}
Since $f_{\ast}(x)\ge \frac{3}{8}>0$, ${\bf e}_{\theta}$ is smooth on $\Gamma_{\rm cd}^{L,f_{\ast}}$, and we have
\begin{equation}\label{B-2-est}
\|\tilde{\mathcal{B}}{\bf e}_{\theta}\|_{k-1,\alpha,\Gamma_{\rm cd}^{L,f_\ast}}\le C\|\tilde{\mathcal{B}}\|_{k-1,\alpha,\Gamma_{\rm cd}^{L,f_\ast}}\quad\mbox{for }k=1,2.
\end{equation}
It follows from \eqref{psi-W-est}, \eqref{W-est-1}, and \eqref{W-2-est}-\eqref{B-2-est} that
\begin{equation*}
\|\psi\|_{k,\alpha,\Omega_{L,f_\ast}^-}\le C\left(\|\tilde{G}\|_{\alpha,\mathcal{N}_{L,f_\ast}^-}+\|\tilde{\mathcal{B}}\|_{k-1,\alpha,\Gamma_{\rm cd}^{L,f_\ast}}\right)\quad\mbox{for }k=1,2.
\end{equation*}
The claim is verified.
\end{proof}

{\bf 3.}  {\emph{(Linearized boundary value problem for $\varphi$)}}
For $\xi\in\mathbb{R}$, ${\bf s}=(s_1,s_2,s_3)$, and ${\bf v}=(v_1,v_2,v_3)\in\mathbb{R}^3$, define $\widetilde{H}$ and ${\bf A}=(A_1,A_2,A_3)$ by
\begin{equation*}
\widetilde{H}(\xi, {\bf s},{\bf v}):=H(\xi,{\bf s}+{\bf v}),\quad
A_j(\xi,{\bf s},{\bf v}):=\widetilde{H}(\xi, {\bf s},{\bf v})s_j\quad\mbox{for}\quad j=1,2,3,
\end{equation*}
where  $H$ is defined by \eqref{def-H-G}.
Then the equation
$$\mbox{div}\left(H(S,{\bf q}(r,\psi,D\psi,D\varphi,\Lambda)){\bf q}(r,\psi,D\psi,D\varphi,\Lambda)\right)=0$$
can be rewritten as
\begin{equation}\label{re-conti}
\mbox{div}\left({\bf A}(S,D\varphi,{\bf t}(r,\psi,D\psi,\Lambda))\right)=-\mbox{div}\left(\widetilde{H}(S,D\varphi,{\bf t}(r,\psi,D\psi,\Lambda)){\bf t}(r,\psi,D\psi,\Lambda)\right).
\end{equation}
For $\varphi_0$ given by \eqref{def-varphi0-B}, denote ${\bf V}_0:=(S_0^-,D\varphi_0,{\bf 0})$ and set
\begin{equation}\label{aij-def}
a_{ij}:=\partial_{s_j}A_i({\bf V}_0)\quad\mbox{for}\quad i,j=1,2,3.
\end{equation}
Then the constant matrix $[a_{ij}]_{i,j=1}^3$ is strictly positive and diagonal, and there exists a constant $\nu\in(0,1/10]$ satisfying
\begin{equation}\label{3D-nu-aij}
\nu< a_{ii}<\frac{1}{\nu}\quad\mbox{for all}\quad i=1,2,3.
\end{equation}

Set $\phi:=\varphi-\varphi_0$.
Then \eqref{re-conti} can be rewritten as
\begin{equation*}
\mathcal{L}(\phi)=\mbox{div}{\bf F}(S-S_0^-,D\phi,{\bf t}(r,\psi,D\psi,\Lambda)),
\end{equation*}
where $\mathcal{L}$ and ${\bf F}=(F_1,F_2,F_3)$ are defined as follows:
\begin{equation}\label{def-F}
\left.\begin{split}
\mathcal{L}(\phi):=&\sum_{i=1}^3 a_{ii}\partial_{ii}\phi,\\
F_i(Q):=&-\widetilde{H}({\bf V}_0+Q)v_i-\int_0^1 D_{\xi,\bf v}A_i({\bf V}_0+tQ)dt\cdot(\xi,{\bf v})\\
&-{\bf s}\cdot\int_0^1D_{\bf s}A_i({\bf V}_0+tQ)-D_{\bf s}A_i({\bf V}_0)dt,
\end{split}
\right.
\end{equation}
with  $Q=(\xi, {\bf s},{\bf v})\in\mathbb{R}\times(\mathbb{R}^3)^2$.
Here, $\partial_{x_i}$ is abbreviated as $\partial_i$.

By the boundary conditions for $\varphi$ given in \eqref{Fixed-BVP} and the definition of $\varphi_0$, the boundary conditions for $\phi$ on $\partial\mathcal{N}_{L,f_{\ast}}^-\backslash\Gamma_{\rm cd}^{L,f_{\ast}}$ become
\begin{equation*}
\phi=\varphi_{\rm en }\quad\mbox{on}\quad \Gamma_{\rm en}^-\quad\mbox{and}\quad
\phi=0\quad\mbox{on}\quad \Gamma_{\rm ex}^{L,f_{\ast}}.
\end{equation*}
On $\Gamma_{\rm cd}^{L,f_{\ast}}$, the boundary condition for $\varphi$ given in \eqref{Fixed-BVP} implies that $\phi$ should be a constant along $\Gamma_{\rm cd}^{L,f_{\ast}}$.
Since we seek a solution $\phi$ to be continuous up to the boundary, and since $\varphi_{\rm en}(0,\frac{1}{2})=0$ by the definition \eqref{def-varphi-en}, we prescribe the boundary condition for $\phi$ on $\Gamma_{\rm cd}^{L,f_{\ast}}$ as
\begin{equation*}
\phi=0\quad\mbox{on}\quad \Gamma_{\rm cd}^{L,f_\ast}.
\end{equation*}

For a fixed $(\tilde{\phi},\tilde{\psi})\in\mathcal{K}^{f_{\ast}}(M_3,M_4)$, let  $\psi\in C^{2,\alpha}(\overline{\Omega_{L,f_\ast}^-})$ be the unique solution to the linear boundary value problem \eqref{lin-psi} associated with $(\tilde{\phi},\tilde{\psi})\in\mathcal{K}^{f_{\ast}}(M_3,M_4)$. For such $\psi$, we set
\begin{equation}\label{FF-def}
\mathfrak{F}:={\bf F}(S_\ast-S_0^-,D{\tilde{\phi}},{\bf t}(r,{\psi},D{\psi},\Lambda_{\ast})),
\end{equation}
where ${\bf F}$ is given by \eqref{def-F}. And, we consider the following linear boundary value problem
\begin{equation}\label{lin-phi}
\left\{\begin{split}
\mathcal{L}(\phi)=\mbox{div}\mathfrak{F}\quad&\mbox{in}\quad \mathcal{N}_{L,f_\ast}^-,\\
\phi=\varphi_{\rm en}\quad&\mbox{on}\quad\Gamma_{\rm en}^-,\\
\phi=0\quad&\mbox{on}\quad \Gamma_{\rm cd}^{L,f_\ast}\cup\Gamma^{L,f_{\ast}}_{\rm ex}.
\end{split}\right.
\end{equation}
In the next step, we prove the well-posedness of \eqref{lin-phi}.

{\bf 4.} {\emph{(The well-posedness of \eqref{lin-phi})}}
{\bf Claim:} For each $(\tilde{\phi},\tilde{\psi})\in\mathcal{K}^{f_{\ast}}(M_3,M_4)$, the linear boundary value problem
\eqref{lin-phi} associated with $(\tilde{\phi},\tilde{\psi})$  has a unique solution $\phi\in C^{2,\alpha}(\overline{\mathcal{N}_{L,f_\ast}^-})$, and the solution satisfies
\begin{equation}\label{3D-phi-est}
\|\phi\|_{k,\alpha,\mathcal{N}_{L,f_\ast}^-}
\le C\left(\|{\mathfrak F}\|_{k-1,\alpha,\mathcal{N}_{L,f_\ast}^-}+\|{\varphi_{\rm en}}\|_{k,\alpha,\Gamma_{\rm en}^-}\right)\quad\mbox{for }k=1,2.
\end{equation}
Moreover, the solution $\phi$ is axially symmetric, and it satisfies
$$\partial_{xx}\phi\equiv 0\quad\mbox{on}\quad
(\Gamma_{\rm en}^-\cap\{r\ge\frac{1}{2}-\epsilon\})\cup\Gamma^{L,f_{\ast}}_{\rm ex}.$$

\begin{proof}[Proof of Claim.]
For $\varphi_{\rm en}$ given by \eqref{def-varphi-en}, define a function ${\varphi_{\rm en}^{\ast}}$ by
\begin{equation}\label{def-var-ast}
{\varphi_{\rm en}^{\ast}}({\bf x}):=\eta(x_1)\varphi_{\rm en}\left(\frac{\sqrt{x_2^2+x_3^2}}{f_{\ast}(x_1)}\right)\quad\mbox{for}\quad{\bf x}=(x_1,x_2,x_3)\in\mathcal{N}_{L,f_\ast}^-,
\end{equation}
where $\eta$ is a $C^{\infty}$-function satisfying
\begin{equation}\label{eta-def}
\eta=1\quad\mbox{for } x_1<\frac{L}{10},\quad \eta=0\quad\mbox{for }x_1>\frac{9L}{10},\quad |\eta'(x_1)|\le2,\quad|\eta''(x_1)|\le 2.
\end{equation}
Set $\phi_{\rm hom}:=\phi-\varphi_{\rm en}^{\ast}$. Then the linear boundary value problem \eqref{lin-phi} can be rewritten as
\begin{equation}\label{3D-hom-eq}
\left\{\begin{split}
\mathcal{L}(\phi_{\rm hom})
=\mathfrak{F}^{\ast}\quad&\mbox{in}\quad\mathcal{N}_{L,f_\ast}^-,\\
\phi_{\rm hom}=0\quad&\mbox{on}\quad\partial\mathcal{N}_{L,f_\ast}^-,
\end{split}\right.
\end{equation}
for $\mathfrak{F}^{\ast}$ defined by
\begin{equation}\label{def-F-ast}
\mathfrak{F}^{\ast}:=\mbox{div}\mathfrak{F}-\sum_{i=1}^3a_{ii}\partial_{ii}\varphi_{\rm en}^{\ast},
\end{equation}
where $a_{ii}$ $(i=1,2,3)$ are given by \eqref{aij-def}.
By the standard elliptic theory, the linear boundary value problem \eqref{3D-hom-eq} has a unique solution $\phi_{\rm hom}\in C^{1,\alpha}(\overline{\mathcal{N}_{L,f_\ast}^-})\cap C^{2,\alpha}(\mathcal{N}_{L,f_{\ast}}^-)$.

To obtain a uniform $C^0$-estimate of $\phi_{\rm hom}$ for all $L$, we define a function $\mathfrak{M}$ by
$$\mathfrak{M}({\bf x}):=-\frac{3}{2}\left(\frac{\|\mathfrak{F}^{\ast}\|_{\alpha,\mathcal{N}_{L,f_\ast}^-}}{a_{22}}\right)x_2^2+\frac{2\|\mathfrak{F}^{\ast}\|_{\alpha,\mathcal{N}_{L,f_\ast}^-}}{a_{22}}.$$
Since $a_{22}>\nu>0$ in $\mathcal{N}_{L,f_{\ast}}^-$ by \eqref{3D-nu-aij}, $\mathfrak{M}$ is well-defined.
A direct computation yields
\begin{equation*}
\left\{\begin{split}
\mathcal{L}(\mathfrak{M}\pm\phi_{\rm hom})=-3\|\mathfrak{F}^{\ast}\|_{\alpha,\mathcal{N}_{L,f_\ast}^-}\pm \mathcal{L}(\phi_{\rm hom})\le0\quad&\mbox{in}\quad\mathcal{N}_{L,f_\ast}^-,\\
\mathfrak{M}\pm\phi_{\rm hom}=\mathfrak{M}\ge0\quad&\mbox{on}\quad\partial\mathcal{N}_{L,f_\ast}^-.\\
\end{split}\right.
\end{equation*}
Since $\mathcal{L}$ is uniformly elliptic, the comparison principle implies
$-\mathfrak{M}\le\phi_{\rm hom}\le\mathfrak{M}$ in $\mathcal{N}_{L,f_{\ast}}^-,$ from which it follows that
$$\|\phi_{\rm hom}\|_{0,\mathcal{N}_{L,f_\ast}^-}\le C\|\mathfrak{F}^{\ast}\|_{\alpha,\mathcal{N}_{L,f_\ast}^-}.$$
Then we obtain the estimate
$$\|\phi_{\rm hom}\|_{1,\alpha,\mathcal{N}_{L,f_\ast}^-}\le C\|\mathfrak{F}^{\ast}\|_{\alpha,\mathcal{N}_{L,f_\ast}^-}.$$
To obtain $C^{2,\alpha}$-estimate of $\phi_{\rm hom}$ up to the boundary, we use the method of reflection.
By the compatibility conditions of $(S_\ast,\Lambda_\ast,\tilde{\phi})$ given in \eqref{Ent-Ang-set} and \eqref{ell-set}, and $\partial_x\psi\equiv0$ on $(\Gamma_{\rm en}^-\cap\{r\ge\frac{1}{2}-\epsilon\})\cup\Gamma^{L,f_{\ast}}_{\rm ex}$ given from \eqref{3D-psi-BC}, we have
\begin{equation}\label{F-0-ex}
\mbox{div}\mathfrak{F}=\mbox{div}{\bf F}(S_{\ast}-S_0^-,D\tilde{\phi},{\bf t}(r,{\psi},D{\psi},\Lambda_{\ast}))\equiv0\quad\mbox{on }(\Gamma_{\rm en}^-\cap\{r\ge\frac{1}{2}-\epsilon\})\cup\Gamma^{L,f_{\ast}}_{\rm ex}.
\end{equation}
From the definition of $\varphi_{\rm en}^{\ast}$ given in \eqref{def-var-ast}, the compatibility conditions of $f_{\ast}$ given in \eqref{F-set}, and the definition of  $\eta$ given in \eqref{eta-def}, it can be directly checked that
\begin{equation}\label{partial-varphi-en}
\partial_{ii}\varphi_{\rm en}^{\ast}\equiv 0\quad\mbox{on}\quad  (\Gamma_{\rm en}^-\cap\{r\ge\frac{1}{2}-\epsilon\})\cup\Gamma^{L,f_{\ast}}_{\rm ex},\quad i=1,2,3.
\end{equation}
It follows from  \eqref{F-0-ex}-\eqref{partial-varphi-en}  and  the definition of $\mathfrak{F}^{\ast}$ given in \eqref{def-F-ast} that
\begin{equation*}
\mathfrak{F}^{\ast}\equiv 0\quad\mbox{on}\quad  (\Gamma_{\rm en}^-\cap\{r\ge\frac{1}{2}-\epsilon\})\cup\Gamma^{L,f_{\ast}}_{\rm ex}.
\end{equation*}
Then we can apply the method of reflection to obtain the estimate
$$\|\phi_{\rm hom}\|_{2,\alpha,\mathcal{N}_{L,f_\ast}^-}\le C\|\mathfrak{F}^{\ast}\|_{\alpha,\mathcal{N}_{L,f_\ast}^-},$$
and this implies that the linear boundary value problem \eqref{lin-phi} has a unique solution $\phi=\phi_{\rm hom}+\varphi_{\rm en}^{\ast}\in C^{2,\alpha}(\overline{\mathcal{N}_{L,f_{\ast}}^-})$ that satisfies
\begin{equation*}
\|\phi\|_{k,\alpha,\mathcal{N}_{L,f_{\ast}}^-}\le C\left(\|\mathfrak{F}\|_{k-1,\alpha,\mathcal{N}_{L,f_\ast}^-}+\|\varphi_{\rm en}\|_{k,\alpha,\Gamma_{\rm en}^-}\right)\quad\mbox{for }k=1,2.
\end{equation*}

For any $\theta\in[0,2\pi)$, define a function $\phi_{\rm hom}^{\theta}$ by
$$\phi_{\rm hom}^{\theta}({\bf x}):=\phi_{\rm hom}(x_1,x_2\cos\theta-x_3\sin\theta,x_2\sin\theta+x_3\cos\theta).$$
Then, we have $\phi_{\rm hom}^{\theta}=\phi_{\rm hom}$ on $\partial \mathcal{N}_{L,f_\ast}^-$.
By using \eqref{aij-def}, it can be directly checked that $a_{22}= a_{33}$. Therefore, $\mathcal{L}(\phi_{\rm hom}^{\theta})=\mathcal{L}(\phi_{\rm hom})$ holds in $\mathcal{N}_{L,f_\ast}^-$. This implies that $\phi_{\rm hom}^{\theta}$ is a solution to \eqref{3D-hom-eq}. By the uniqueness of a solution to \eqref{3D-hom-eq}, we conclude that
$\phi_{\rm hom}=\phi_{\rm hom}^{\theta}.$
Therefore $\phi_{\rm hom}$ is axially symmetric, and this implies that $\phi$ is axially symmetric.

Since $\phi_{\rm hom}\equiv 0$ and $\sum_{i=1}^3 a_{ii}\partial_{ii}\varphi_{\rm en}^{\ast}\equiv 0$ on $(\Gamma_{\rm en}^-\cap\{r\ge\frac{1}{2}-\epsilon\})\cup\Gamma^{L,f_{\ast}}_{\rm ex}$, we have
\begin{equation}\label{phi-0-ex}
\partial_{ii}\phi\equiv0 \quad\mbox{on}\quad(\Gamma_{\rm en}^-\cap\{r\ge\frac{1}{2}-\epsilon\})\cup\Gamma^{L,f_{\ast}}_{\rm ex}\quad\mbox{for}\quad i=2,3.
\end{equation}
It follows from \eqref{F-0-ex} and \eqref{phi-0-ex} that
$\mathcal{L}(\phi)=a_{11}\partial_{xx}\phi\equiv 0$ on $(\Gamma_{\rm en}^-\cap\{r\ge\frac{1}{2}-\epsilon\})\cup\Gamma^{L,f_{\ast}}_{\rm ex}.$
Since $a_{11}>0$, we conclude that
$\partial_{xx}\phi\equiv 0$ on $(\Gamma_{\rm en}^-\cap\{r\ge\frac{1}{2}-\epsilon\})\cup\Gamma^{L,f_{\ast}}_{\rm ex}.$
The proof of claim is completed.
\end{proof}

{\bf 5.}  {\emph{(The well-posedness of nonlinear boundary value problem  \eqref{Fixed-BVP})}}
For fixed $(\mathcal{W}_{\ast},f_{\ast})\in\mathcal{P}(M_1)\times\mathcal{F}(M_2)$,
define an iteration mapping $\mathcal{I}^{f_{\ast},\mathcal{W}_{\ast}}:\mathcal{K}^{f_\ast}(M_3,M_4)\rightarrow C^{2,\alpha}(\overline{\mathcal{N}_{L,f_\ast}^-})\times C^{2,\alpha}(\overline{\Omega_{L,f_{\ast}}^-})$ by
\begin{equation*}
\mathcal{I}^{f_\ast,\mathcal{W}_{\ast}}(\tilde{\phi},\tilde{\psi})=(\phi,\psi),
\end{equation*}
where $(\phi,\psi)$ is the solution to \eqref{lin-psi} and \eqref{lin-phi} associated with $(\tilde{\phi},\tilde{\psi})$.

By straightforward computations, one can easily check that  there exists a constant $\epsilon_1\in(0,\frac{1}{8})$ depending only on the data so that if
\begin{equation}\label{epsilon1}
M_1\sigma+M_2\sigma+M_3\sigma+M_4\sigma\le \epsilon_1,
\end{equation}
then we have
\begin{equation}\label{est-lem}
\left.
\begin{split}
&\|{\mathfrak F}\|_{1,\alpha,\mathcal{N}_{L,f_\ast}^-}\le C\left(M_1\sigma+(M_3\sigma)^2+M_4\sigma\right),\\
&\|\tilde{\mathcal{B}}\|_{1,\alpha,\Gamma_{\rm cd}^{L,f_\ast}}\le C\left(M_1\sigma+(M_2\sigma)^2\right),\\
&\|\tilde{G}\|_{\alpha,\mathcal{N}_{L,f_\ast}^-}\le CM_1\sigma,\\
\end{split}\right.
\end{equation}
where ${\mathfrak F}$, $\tilde{\mathcal{B}}$, and $\tilde{G}$ are given by  \eqref{FF-def}, and \eqref{def-GB}.
It follows from \eqref{psi-est-2}, \eqref{3D-phi-est}, and \eqref{est-lem} that
\begin{equation}\label{pp-est-M3}
\left.\begin{split}
&\|\phi\|_{2,\alpha,\mathcal{N}_{L,f_\ast}^-}\le C_1^{\flat}\left(M_1\sigma+(M_3\sigma)^2+M_4\sigma+\sigma\right),\\
&\|\psi\|_{2,\alpha,\Omega_{L,f_{\ast}}^-}\le C_1^{\flat}\left(M_1\sigma+(M_2\sigma)^2\right),
\end{split}\right.
\end{equation}
for a constant $C_1^{\flat}>0$ depending  on the data and $\alpha$ but independent of $L$.
We choose $M_3$, $M_4$, and $\sigma_6^{\ast}$ as
\begin{equation}\label{3D-sigma8}
\begin{split}
&M_3=4C_1^{\flat}(1+M_1+M_4),\quad M_4=2C_1^{\flat}M_1,\\
&\mbox{and}\quad \sigma_6^{\ast}=\min\left\{\frac{\epsilon_1}{M_1+M_2+M_3+M_4},\frac{M_4}{2C_1^{\flat}M_2^2},\frac{1}{4C_1^{\flat}M_3}\right\},
\end{split}
\end{equation}
where $\epsilon_1$ is given in \eqref{epsilon1},
so that \eqref{pp-est-M3} implies that
$(\phi,\psi)\in\mathcal{K}^{f_\ast}(M_3,M_4)$ for $\sigma\le\sigma_6^{\ast}.$
Under such choices of $(M_3, M_4, \sigma_6^{\ast})$, the iteration mapping
$\mathcal{I}^{f_{\ast},\mathcal{W}_{\ast}}$ maps $\mathcal{K}^{f_\ast}(M_3,M_4)$ into itself if $\sigma\le\sigma_6^{\ast}$.
Furthermore, $(\phi,\psi)$ satisfies the estimate
\begin{equation*}
\|\phi\|_{2,\alpha,\mathcal{N}_{L,f_{\ast}}^-}+\|\psi\|_{2,\alpha,\Omega_{L,f_{\ast}}^-}\le (M_3+M_4)\sigma\le C(M_1+1)\sigma.
\end{equation*}

Now we show that $\mathcal{I}^{f_\ast,\mathcal{W}_{\ast}}$ is a contraction mapping if $\sigma$ is a small constant depending only on the data and $(\alpha, M_1,M_2)$.

For each $j=1,2$, let
\begin{equation*}\left\{
\begin{split}
&(\phi^{(j)},\psi^{(j)}):=\mathcal{I}^{f_\ast,\mathcal{W}_\ast}(\tilde{\phi}^{(j)},\tilde{\psi}^{(j)})\quad\mbox{for }(\tilde{\phi}^{(j)},\tilde{\psi}^{(j)})\in\mathcal{K}^{f_\ast}(M_3,M_4),\\
&{\bf F}_{\ast}:={\bf F}(S_{\ast}-S_0^-,D\tilde{\phi}^{(1)},{\bf t}(r,\psi^{(1)},D\psi^{(1)},\Lambda_{\ast}))\\
&\qquad -{\bf F}(S_{\ast}-S_0^-,D\tilde{\phi}^{(2)},{\bf t}(r,\psi^{(2)},D\psi^{(2)},\Lambda_{\ast})),\\
&G_{\ast}:=G(\mathcal{W}_{\ast},\partial_r\mathcal{W}_{\ast},{\bf t}(r,\tilde{\psi}^{(1)},D\tilde{\psi}^{(1)},\Lambda_{\ast}),D\tilde{\phi}^{(1)}+D\varphi_0)\\
&\qquad -G(\mathcal{W}_{\ast},\partial_r\mathcal{W}_{\ast},{\bf t}(r,\tilde{\psi}^{(2)},D\tilde{\psi}^{(2)},\Lambda_{\ast}),D\tilde{\phi}^{(2)}+D\varphi_0),
\end{split}\right.
\end{equation*}
where ${\bf F}$ and $G$ are given by \eqref{def-F} and \eqref{def-H-G}, respectively.
By a direct computation, it can be checked that there exists a constant $\epsilon_2\in(0,\epsilon_1]$ depending only on the data so that if $$M_1\sigma+M_2\sigma+M_3\sigma+M_4\sigma\le\epsilon_2,$$ then we have
\begin{equation}\label{diff-est}
\left.\begin{split}
&\|{\bf F}_{\ast}\|_{1,\alpha,\mathcal{N}_{L,f_\ast}^-}\le C\|\psi^{(1)}-\psi^{(2)}\|_{2,\alpha,\Omega_{L,f_{\ast}}^-}+C(M_1+1)\sigma\|\tilde{\phi}^{(1)}-\tilde{\phi}^{(2)}\|_{2,\alpha,\mathcal{N}_{L,f_\ast}^-},\\
&\|G_{\ast}\|_{\alpha,\mathcal{N}_{L,f_\ast}^-}\le CM_1\sigma\left(\|\tilde{\psi}^{(1)}-\tilde{\psi}^{(2)}\|_{2,\alpha,\Omega_{L,f_{\ast}}^-}+\|\tilde{\phi}^{(1)}-\tilde{\phi}^{(2)}\|_{2,\alpha,\mathcal{N}_{L,f_\ast}^-}\right).
\end{split}\right.
\end{equation}
Then it follows from \eqref{psi-est-2}, \eqref{3D-phi-est}, and \eqref{diff-est} that
\begin{equation*}
\begin{split}
\|\phi^{(1)}&-\phi^{(2)}\|_{2,\alpha,\mathcal{N}_{L,f_\ast}^-}+\|\psi^{(1)}-\psi^{(2)}\|_{2,\alpha,\Omega_{L,f_{\ast}}^-}\\
&\le C_2^{\flat}(M_1+1)\sigma\left(\|\tilde{\psi}^{(1)}-\tilde{\psi}^{(2)}\|_{2,\alpha,\Omega_{L,f_\ast}^-}+\|\tilde{\phi}^{(1)}-\tilde{\phi}^{(2)}\|_{2,\alpha,\mathcal{N}_{L,f_\ast}^-}\right)
\end{split}
\end{equation*}
for a constant $C_2^{\flat}>0$ depending only on the data and $\alpha$ but independent of $L$.
Choose $\sigma_6$ as
\begin{equation}\label{Sigma6}
\sigma_6=\min\left\{\sigma_6^{\ast},\frac{1}{2C_2^{\flat}(M_1+1)},\frac{\epsilon_2}{M_1+M_2+M_3+M_4}\right\}
\end{equation}
with $\sigma_6^{\ast}$ defined in \eqref{3D-sigma8}. Thus if $\sigma\le \sigma_6$, then the mapping $\mathcal{I}^{f_{\ast},\mathcal{W}_\ast}$ is a contraction mapping so that $\mathcal{I}^{f_{\ast},\mathcal{W}_{\ast}}$ has a unique fixed point in $\mathcal{K}^{f_\ast}(M_3,M_4)$.
This gives the unique existence of a solution to \eqref{Fixed-BVP}.
The proof of Lemma \ref{Pro-fix-S} is completed.
\end{proof}

Next, we prove the unique solvability of Problem \ref{Prob4-Fix-S}, which is a free boundary problem.
\begin{proof}[Proof of Lemma \ref{Lem-S-free}.]
{\bf 1.}
Now we choose $M_2$ from \eqref{F-set}, and adjust $\sigma$ to find a solution of Problem \ref{Prob4-Fix-S} by the method of iteration.

Given $f_{\ast}\in\mathcal{F}(M_2)$ and $(S_{\ast},\Lambda_{\ast})\in\mathcal{P}(M_1)$, let $(\varphi,\psi)\in C^{2,\alpha}(\overline{\mathcal{N}_{L,f_{\ast}}^-})\times C^{2,\alpha}(\overline{\Omega_{L,f_\ast}^-})$ be the unique solution to the boundary value problem \eqref{Fixed-BVP}. Note that $(\varphi,\psi)$ satisfies the estimate \eqref{fix-est} given in Lemma \ref{Pro-fix-S}.
For simplicity, we set
\begin{equation*}
\begin{split}
&\rho^{\ast}:=H(S_{\ast},{\bf q}(r,\psi,D\psi,D\varphi,\Lambda_{\ast})),\\
&{\bf u}^{\ast}:=\left(\partial_x\varphi+\frac{1}{r}\partial_r(r\psi)\right){\bf e}_x+\left(\partial_r\varphi-\partial_x\psi\right){\bf e}_r,
\end{split}
\end{equation*}
where $H$ is given in \eqref{def-H-G}.
From the first equation in \eqref{Fixed-BVP}, we have
\begin{equation}\label{conti-u}
\frac{\partial_x(r\rho^{\ast}{\bf u}^{\ast}\cdot{\bf e}_x)+\partial_r(r\rho^{\ast}{\bf u}^{\ast}\cdot{\bf e}_r)}{r}=0.
\end{equation}
As in the proof of Lemma \ref{Pro-fix-S}, there exists a constant $\epsilon_3\in(0,1]$ depending only on the data and $\alpha$ so that if
\begin{equation*}\label{epsilon3}
M_1\sigma+M_2\sigma+\sigma\le\epsilon_3,
\end{equation*}
then we have
\begin{equation}\label{rhou-est-1}
\|\rho^{\ast}{\bf u}^{\ast}-\rho_0^-u_0{\bf e}_x\|_{1,\alpha,\mathcal{N}_{L,f_\ast}^-}\le C_{\star}(M_1+1)\sigma,
\end{equation}
where the constant $C_{\star}>0$ depends only on the data and $\alpha$ but independent of $L$.
If $\sigma\in(0,\sigma_6]$ satisfies
\begin{equation*}
\sigma\le \frac{\rho_0^-u_0}{2C_{\star}(M_1+1)},
\end{equation*}
then we obtain from \eqref{rhou-est-1} that
\begin{equation}\label{rhou-est-2}
\|\rho^{\ast}{\bf u}^{\ast}-\rho_0^-u_0{\bf e}_x\|_{1,\alpha,\mathcal{N}_{L,f_\ast}^-}\le \frac{\rho_0^-u_0}{2}.
\end{equation}

For each $x\in[0,L]$, we choose $f(x)\in\mathbb{R}^+$ to satisfy
\begin{equation}\label{3D-f-est1}
\int_{f_\ast(x)}^{f(x)} t \rho_0^-u_0 dt=\int_0^{1/2}  t\rho^{\ast}{\bf u}^{\ast}\cdot{\bf e}_x(0,t)dt-\int_0^{f_{\ast}(x)} t\rho^{\ast}{\bf u}^{\ast}\cdot{\bf e}_x(x,t)dt.
\end{equation}
If $f\equiv f_{\ast}$, then \eqref{3D-f-est1} yields that
\begin{equation}\label{fix-f-eq}
\int_0^{1/2}  t\rho^{\ast}{\bf u}^{\ast}\cdot{\bf e}_x(0,t)dt=\int_0^{f(x)} t\rho^{\ast}{\bf u}^{\ast}\cdot{\bf e}_x(x,t)dt.
\end{equation}
Differentiating \eqref{fix-f-eq} with respect to $x$, and using the equation \eqref{conti-u},
we have
\begin{equation*}
f'(x)=\frac{{\bf u}^{\ast}\cdot{\bf e}_r}{{\bf u}^{\ast}\cdot{\bf e}_x}(x,f(x))=\frac{\partial_r\varphi-\partial_x\psi}{\partial_x\varphi+\frac{1}{r}\partial_r(r\psi)}(x,f(x)).
\end{equation*}
Also, we have $f(0)=\frac{1}{2}$.
Thus $f$ satisfies the free boundary condition \eqref{g-free-cut} for $0<x<L$.

Since $\rho_0^-u_0>0$, \eqref{3D-f-est1} is equivalent to
\begin{equation}\label{3D-f-est2}
f^2(x)=f_\ast^2(x)+\frac{2}{\rho_0^-u_0}\int_0^{1/2}  t\rho^{\ast}{\bf u}^{\ast}\cdot{\bf e}_x(0,t)dt-\frac{2}{\rho_0^-u_0}\int_0^{f_{\ast}(x)} t\rho^{\ast}{\bf u}^{\ast}\cdot{\bf e}_x(x,t)dt.
\end{equation}
By \eqref{rhou-est-1} and \eqref{rhou-est-2},
\begin{equation}
\label{3D-f-est4}
\mbox{RHS of \eqref{3D-f-est2}}\ge
\frac{1}{16}>0\quad\mbox{if}\quad\sigma\le\min\left\{\frac{\epsilon_3}{M_1+M_2+1}, \frac{\rho_0^-u_0}{16 C_{\star}(M_1+1)}\right\}=:\sigma_5'.
\end{equation}
Then the function $f:[0,L]\rightarrow\mathbb{R}^+$ given by
\begin{equation}\label{3D-f-est3}
f(x):=\sqrt{f_\ast^2(x)+\frac{2}{\rho_0^-u_0}\int_0^{1/2}  t\rho^{\ast}{\bf u}^{\ast}\cdot{\bf e}_x(0,t)dt-\frac{2}{\rho_0^-u_0}\int_0^{f_{\ast}(x)} t\rho^{\ast}{\bf u}^{\ast}\cdot{\bf e}_x(x,t)dt}
\end{equation}
 is well-defined, and satisfies \eqref{3D-f-est1}.
And, $f(0)=\frac{1}{2}$, $f'(0)=f'(L)=0$.
Moreover, by a direct computation, we have the estimate
 \begin{equation}\label{3D-f-est7}
 \|f-\frac{1}{2}\|_{2,\alpha,(0,L)}\le C_{\star\star}(M_1+1)\sigma
 \end{equation}
 for a constant $C_{\star\star}>0$ depending only on the data and $\alpha$ but independent of $L$.

We define an iteration mapping $\mathcal{I}^{\mathcal{W}_{\ast}}:\mathcal{F}(M_2)\rightarrow C^{2,\alpha}([0,L])$ by
$$\mathcal{I}^{\mathcal{W}_{\ast}}(f_{\ast})=f$$
for $f$ given by \eqref{3D-f-est3}.
Choose $M_2$ and $\sigma_5^{\ast}$ as
\begin{equation}\label{Sigma5star}
M_2=C_{\star\star}(M_1+1)\quad\mbox{and}\quad\sigma_5^{\ast}=\min\left\{\sigma_6,\sigma_5'\right\}
\end{equation}
for $\sigma_6$ and $\sigma_5'$ defined by \eqref{Sigma6} and \eqref{3D-f-est4}, respectively.
Under such choices of $(M_2,\sigma_5^{\ast})$, the iteration mapping $\mathcal{I}^{\mathcal{W}_{\ast}}$ maps $\mathcal{F}(M_2)$ into itself if $\sigma\le\sigma_5^{\ast}$.

{\bf 2.}
The iteration set $\mathcal{F}(M_2)$ given by \eqref{F-set} is a convex and compact subset of $C^{2,\alpha/2}([0,L])$.
For each fixed $\mathcal{W}_{\ast}\in \mathcal{P}(M_1)$, the iteration map $\mathcal{I}^{\mathcal{W}_{\ast}}$ maps $\mathcal{F}(M_2)$ into itself where $M_2$ is chosen by \eqref{Sigma5star}, and $\sigma\le \sigma_5^{\ast}$ for $\sigma_5^{\ast}$ from \eqref{Sigma5star}.

Suppose that a sequence $\{f_{\ast}^{(k)}\}_{k=1}^{\infty}\subset\mathcal{F}(M_2)$ converges in $C^{2,\alpha/2}([0,L])$ to $f_{\ast}^{(\infty)}\in\mathcal{F}(M_2)$.
For each $k\in\mathbb{N}\cup\{\infty\}$, set
\begin{equation*}
f^{(k)}:=\mathcal{I}^{\mathcal{W}_{\ast}}(f_{\ast}^{(k)}).
\end{equation*}
And, let $\mathcal{U}^{(k)}:=(\varphi^{(k)},\psi^{(k)})\in C^{2,\alpha}(\overline{\mathcal{N}_{L,f_{\ast}^{(k)}}^-})\times C^{2,\alpha}(\overline{\Omega_{L,f_{\ast}^{(k)}}^-})$ be the unique solution of \eqref{Fixed-BVP} associated with $f_{\ast}=f_{\ast}^{(k)}$.
Define a transformation $T^{(k)}:\overline{\mathcal{N}_{L,f_{\ast}^{(\infty)}}^-}\rightarrow \overline{\mathcal{N}_{L,f_{\ast}^{(k)}}^-}$ by
\begin{equation*}
T^{(k)}(x_1,x_2,x_3)=\left(x_1,\sqrt{\frac{f_{\ast}^{(k)}(x_1)}{f_{\ast}^{(\infty)}(x_1)}}x_2,\sqrt{\frac{f_{\ast}^{(k)}(x_1)}{f_{\ast}^{(\infty)}(x_1)}}x_3\right).\end{equation*}
Then $\{\mathcal{U}^{(k)}\circ T^{(k)}\}_{k=1}^{\infty}$ is sequentially compact in $C^{2,\alpha/2}(\overline{\mathcal{N}_{L,f_\ast^{(\infty)}}^-})\times C^{2,\alpha/2}(\overline{\Omega_{L,f_\ast^{(\infty)}}^-})$ and the limit of each convergent subsequence of $\{\mathcal{U}^{(k)}\circ T^{(k)}\}_{k=1}^{\infty}$ in $C^{2,\alpha/2}(\overline{\mathcal{N}_{L,f_\ast^{(\infty)}}^-})\times C^{2,\alpha/2}(\overline{\Omega_{L,f_\ast^{(\infty)}}^-})$ solves \eqref{Fixed-BVP} associated with $f_{\ast}=f_{\ast}^{(\infty)}$.
By the uniqueness of a solution for the problem \eqref{Fixed-BVP}, $\{\mathcal{U}^{(k)}\circ T^{(k)}\}_{k=1}^{\infty}$ is convergent in $C^{2,\alpha/2}(\overline{\mathcal{N}_{L,f_\ast^{(\infty)}}^-})\times C^{2,\alpha/2}(\overline{\Omega_{L,f_\ast^{(\infty)}}^-})$.
It follows from \eqref{3D-f-est3}-\eqref{3D-f-est7} that $f^{(k)}$ converges to $f^{(\infty)}$ in $C^{2,\alpha/2}([0,L])$.
This implies that $\mathcal{I}^{\mathcal{W}_{\ast}}(f_{\ast}^{(k)})$ converges to $\mathcal{I}^{\mathcal{W}_{\ast}}(f_{\ast}^{(\infty)})$ in $C^{2,\alpha/2}([0,L])$.
Thus $\mathcal{I}^{\mathcal{W}_{\ast}}$ is a continuous map in $C^{2,\alpha/2}([0,L])$.
Applying the Schauder fixed point theorem yields that $\mathcal{I}^{\mathcal{W}_{\ast}}$ has a fixed point $f\in\mathcal{F}(M_2)$.
For such $f$, let $(\varphi,\psi)\in C^{2,\alpha}(\overline{\mathcal{N}_{L,f}^-})\times C^{2,\alpha}(\overline{\Omega_{L,f}^-})$ be the unique solution to the fixed boundary problem \eqref{Fixed-BVP} associated with $f_{\ast}=f$.
Then $(f,\varphi,\psi)$ is a solution to Problem \ref{Prob4-Fix-S}.
It follows from \eqref{fix-est} and \eqref{3D-f-est7} that
\begin{equation*}
\|f-\frac{1}{2}\|_{2,\alpha,(0,L)}+\|\varphi-\varphi_0\|_{2,\alpha,\mathcal{N}_{L,f}^-}+\|\psi{\bf e}_{\theta}\|_{2,\alpha,\mathcal{N}_{L,f}^-}\le C\left(M_1+1\right)\sigma.
\end{equation*}

{\bf 3.} Finally, it remains to prove the uniqueness of a solution to Problem \ref{Prob4-Fix-S}.
For a fixed $\mathcal{W}_{\ast}\in\mathcal{P}(M_1)$, let $(f^{(1)},\varphi^{(1)},\psi^{(1)})$ and $(f^{(2)},\varphi^{(2)},\psi^{(2)})$ be two solutions to Problem \ref{Prob4-Fix-S}, and suppose that each solution satisfies the estimate given in \eqref{3D-pps-est} of Lemma \ref{Lem-S-free}.
Define a  transformation $\mathfrak{T}:\overline{\mathcal{N}_{L,f^{(1)}}^-}\rightarrow \overline{\mathcal{N}_{L,f^{(2)}}^-}$ by
\begin{equation}\label{TT}
\mathfrak{T}(x_1,x_2,x_3)=\left(x_1,\sqrt{\frac{f^{(2)}(x_1)}{f^{(1)}(x_1)}}x_2,\sqrt{\frac{f^{(2)}(x_1)}{f^{(1)}(x_1)}}x_3\right).
\end{equation}
Since $f^{(j)}\ge \frac{3}{8}>0$ $(j=1,2)$, the transformation $\mathfrak{T}$ is invertible and
$$\mathfrak{T}^{-1}(y_1,y_2,y_3)=\left(y_1,\sqrt{\frac{f^{(1)}(y_1)}{f^{(2)}(y_1)}}y_2,\sqrt{\frac{f^{(1)}(y_1)}{f^{(2)}(y_1)}}y_3\right).
$$
Set
\begin{equation*}
\left\{\begin{split}
&\widetilde{\phi}:=\varphi^{(1)}-\left(\varphi^{(2)}\circ\mathfrak{T}\right),\quad\widetilde{\psi}:=\psi^{(1)}-\left(\psi^{(2)}\circ\mathfrak{T}\right),\\
& \widetilde{f}:=f^{(1)}-f^{(2)},\quad \widetilde{\mathcal{W}}:=\mathcal{W}_{\ast}-\left(\mathcal{W}_{\ast}\circ\mathfrak{T}\right).
\end{split}\right.
\end{equation*}
We first rewrite the nonlinear boundary value problem \eqref{S-Free-BP} for $(\varphi^{(2)},\psi^{(2)}{\bf e}_{\theta})$ in $\mathcal{N}_{L,f^{(2)}}^-$ as a nonlinear boundary value problem for $(\varphi^{(2)}\circ\mathfrak{T},\psi^{(2)}{\bf e}_{\theta}\circ\mathfrak{T})$ in $\mathcal{N}_{L,f^{(1)}}^-$, and subtract the resultant equations and boundary conditions from the nonlinear boundary value problem \eqref{S-Free-BP} for $(\varphi^{(1)},\psi^{(1)}{\bf e}_{\theta})$ in $\mathcal{N}_{L,f^{(1)}}^-$.
Then we get a nonlinear boundary value problem for $(\widetilde{\phi},\widetilde{\psi}{\bf e}_{\theta})$ in $\mathcal{N}_{L,f^{(1)}}^-$.
By adjusting the proof of Lemma \ref{Pro-fix-S} with using
\begin{equation*}
\|\widetilde{\mathcal{W}}\|_{\alpha,\mathcal{N}_{L,f^{(1)}}^-}\le CM_1\sigma\|\widetilde{f}\|_{1,\alpha,(0,L)}
\quad\mbox{and}\quad \|\partial_r\widetilde{\mathcal{W}}\|_{\alpha,\mathcal{N}_{L,f^{(1)}}^-}\le CM_1\sigma\|\widetilde{f}\|_{1,\alpha,(0,L)},
\end{equation*}
we obtain
\begin{equation*}
\begin{split}
\|\widetilde{\phi}\|_{1,\alpha,\mathcal{N}_{L,f^{(1)}}^-}+\|\widetilde{\psi}\|_{1,\alpha,\Omega_{L,f^{(1)}}^-}
\le& C_1^{\ast}(M_1+1)\sigma\left(\|\widetilde{\phi}\|_{1,\alpha,\mathcal{N}_{L,f^{(1)}}^-}+\|\widetilde{\psi}\|_{1,\alpha,\Omega_{L,f^{(1)}}^-}\right)\\
&+C(M_1+1)\sigma\|\widetilde{f}\|_{1,\alpha,(0,L)}
\end{split}
\end{equation*}
for a constant $C_1^{\ast}>0$ depending only on the data and $\alpha$ but independent of $L$.
In the above,
$\Omega_{L,f^{(1)}}^-:=\left\{(x,r)\in\mathbb{R}^2:\, 0<x<L,\, 0<r<f^{(1)}(x)\right\}$
is a two dimensional set.
If it holds that
$$\sigma\le \frac{1}{2C_1^{\ast}(M_1+1)},$$
then we obtain from the previous estimate that
\begin{equation}\label{RR-est}
\|\widetilde{\phi}\|_{1,\alpha,\mathcal{N}_{L,f^{(1)}}^-}+\|\widetilde{\psi}\|_{1,\alpha,\Omega_{L,f^{(1)}}^-}\le C(M_1+1)\sigma\|\widetilde{f}\|_{1,\alpha,(0,L)}.
\end{equation}
By using the free boundary condition \eqref{g-free-cut}, we can express $(\widetilde{f})'$ in terms of $(\widetilde{\phi},\widetilde{\psi},\mathfrak{T},D\mathfrak{T})$.
Then we apply \eqref{RR-est} to obtain the estimate
\begin{equation}\label{3D-g12_EST}
\|(\widetilde{f})'\|_{\alpha,(0,L)}\le C(M_1+1)\sigma\|\widetilde{f}\|_{1,\alpha,(0,L)}.
\end{equation}
To complete the estimate of $\|\widetilde{f}\|_{1,\alpha,(0,L)}=\|\widetilde{f}\|_{0,(0,L)}+\|(\widetilde{f})'\|_{\alpha,(0,L)}$, we now estimate $\|\widetilde{f}\|_{0,(0,L)}$.
Define $\rho^{(1)}$, $u_x^{(1)}$, $\rho^{(2)}$, and $u_x^{(2)}$ by
\begin{equation*}
\begin{split}
&\rho^{(k)}:=H(S_{\ast},{\bf q}(r,\psi^{(k)},D\psi^{(k)},D\varphi^{(k)},\Lambda_{\ast})),\\
& u_x^{(k)}:=\partial_x\varphi^{(k)}+\frac{1}{r}\partial_r(r\psi^{(k)})\quad\mbox{for}\quad k=1,2,
\end{split}
\end{equation*}
where $H$ is given by \eqref{def-H-G}.
By using \eqref{fix-f-eq}, we get
\begin{equation}\label{f12-rhou}
\begin{split}
\int_0^{1/2}&r\left(\rho^{(1)}u_x^{(1)}-\rho^{(2)}u_x^{(2)}\right)(0,r)dr\\
&=\int_0^{f^{(1)}(x)} r\rho^{(1)}u_x^{(1)}(x,r)dr-\int_0^{f^{(2)}(x)} r\rho^{(2)}u_x^{(2)}(x,r)dr.
\end{split}
\end{equation}
Fix $x_0\in[0,L]$.
Without loss of generality, we may assume that
$$f^{(1)}(x_0)<f^{(2)}(x_0).$$
Then \eqref{f12-rhou} can be rewritten as
\begin{equation*}
\begin{split}
\int_0^{1/2}&r\left(\rho^{(1)}u_x^{(1)}-\rho^{(2)}u_x^{(2)}\right)(0,r)dr\\
&=\int_0^{f^{(1)}(x_0)} r\left(\rho^{(1)}u_x^{(1)}-\rho^{(2)}u_x^{(2)}\right)(x_0,r)dr-\int_{f^{(1)}(x_0)}^{f^{(2)}(x_0)} r\rho^{(2)}u_x^{(2)}(x_0,r)dr.
\end{split}
\end{equation*}
By applying \eqref{RR-est}, we have
\begin{equation*}
0\le f^{(2)}(x_0)-f^{(1)}(x_0)\le C(M_1+1)\sigma\|\widetilde{f}\|_{1,\alpha,(0,L)}.
\end{equation*}
Combining this with \eqref{3D-g12_EST}, we finally get
\begin{equation}\label{final-f}
\|\widetilde{f}\|_{1,\alpha,(0,L)}\le C_2^{\ast}(M_1+1)\sigma\|\widetilde{f}\|_{1,\alpha,(0,L)},
\end{equation}
where the constant $C_2^{\ast}>0$ depends only on the data and $\alpha$ but independent of $L$.
We choose $\sigma_5$ as
\begin{equation}\label{Sigma5}
\sigma_5=\min\left\{\sigma_5^{\ast},\frac{1}{2C_1^{\ast}(M_1+1)},\frac{1}{2C_2^{\ast}(M_1+1)}\right\}
\end{equation}
for $\sigma_5^{\ast}$ defined in \eqref{Sigma5star},
so that \eqref{final-f} implies that $f^{(1)}=f^{(2)}$ for $\sigma\le\sigma_5$.
By Lemma \ref{Pro-fix-S}, $(\varphi^{(1)},\psi^{(1)})=(\varphi^{(2)},\psi^{(2)}).$
The proof  of Lemma \ref{Lem-S-free} is completed.
\end{proof}
\subsection{Proof of Proposition \ref{3D-Prop4.1}}
\label{subsection_4_3}
The proof of Proposition \ref{3D-Prop4.1} is divided into four steps.

{\bf 1.}
For a fixed $\mathcal{W}_{\ast}=(S_*, \Lambda_*)\in\mathcal{P}(M_1)$, let $(f,\varphi,\psi)\in C^{2,\alpha}([0,L])\times C^{2,\alpha}(\overline{\mathcal{N}_{L,f}^-})\times C^{2,\alpha}(\overline{\Omega_{L,f}^-})$ be a solution to Problem \ref{Prob4-Fix-S}.
By Lemma \ref{Lem-S-free}, if $\sigma\le\sigma_5$ for $\sigma_5$ given in \eqref{Sigma5}, then there exists a unique solution $(f,\varphi,\psi)$ that satisfies the estimate \eqref{3D-pps-est}.

\begin{lemma} \label{Pro-trans}
Under the same assumptions on $(S_{\rm en},\nu_{\rm en},u_r^{\rm en})$ as in Proposition \ref{3D-Prop4.1},
there exists a small constant $\sigma_4^{\ast\ast}\in(0,\sigma_5]$ depending only on the data and $\alpha$ so that if
$$\sigma=\|S_{\rm en}-S_0\|_{2,\alpha,\Gamma_{\rm en}^-}+\|\nu_{\rm en}\|_{2,\alpha,\Gamma_{\rm en}^-}+\|u_r^{\rm en}\|_{1,\alpha,\Gamma_{\rm en}^-}\le\sigma_4^{\ast\ast},$$
then the initial value problem \eqref{Ite-3D2} has a unique solution $\mathcal{W}=(S,\Lambda)$ satisfying
\begin{equation*}
\|(S,\Lambda)-(S_0^-,0)\|_{1,\alpha,\mathcal{N}_{L,f}^-}\le C^{\ast}\|(S_{\rm en},r\nu_{\rm en})-(S_0,0)\|_{1,\alpha,\Gamma_{\rm en}^-}
\end{equation*}
for a constant $C^{\ast}>0$ depending only on the data and $\alpha$ but independent of $L$.
Furthermore, regarding $(S,\Lambda)$ as functions of $(x,r)\in\Omega_{L,f}^-$, we have 
\begin{equation}\label{S-est-two}
\|(S,\Lambda)-(S_0^-,0)\|_{2,\alpha,\Omega_{L,f}^-}\le C^{\ast\ast}\|(S_{\rm en},r\nu_{\rm en})-(S_0,0)\|_{2,\alpha,\partial\Omega_{L,f}^-\cap\{x=0\}}
\end{equation}
for a constant $C^{\ast\ast}>0$ depending only on the data and $\alpha$ but independent of $L$.
\end{lemma}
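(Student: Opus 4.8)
The plan is to construct $(S,\Lambda)$ by the method of characteristics, exploiting that the mass flux of the approximate flow is divergence free, and to handle the behavior near the symmetry axis as in \cite{bae20183}. Fix a solution $(f,\varphi,\psi)$ of Problem~\ref{Prob4-Fix-S} supplied by Lemma~\ref{Lem-S-free}, and, as in the proof of that lemma, set $\rho^\ast:=H(S_\ast,{\bf q}(r,\psi,D\psi,D\varphi,\Lambda_\ast))$ and ${\bf u}^\ast:=(\partial_x\varphi+\frac1r\partial_r(r\psi)){\bf e}_x+(\partial_r\varphi-\partial_x\psi){\bf e}_r$. Since $\nabla S$ and $\nabla\Lambda$ have no angular component for axisymmetric $(S,\Lambda)$, the transport equations in \eqref{Ite-3D2} are equivalent to $\rho^\ast{\bf u}^\ast\cdot\nabla S=\rho^\ast{\bf u}^\ast\cdot\nabla\Lambda=0$. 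The first equation of \eqref{S-Free-BP} says precisely that $\partial_x(r\rho^\ast u_x^\ast)+\partial_r(r\rho^\ast u_r^\ast)=0$ on $\Omega_{L,f}^-$; the estimate \eqref{3D-pps-est} gives $\|\rho^\ast{\bf u}^\ast-\rho_0^-u_0{\bf e}_x\|_{1,\alpha,\mathcal N_{L,f}^-}\le C(M_1+1)\sigma$, so after shrinking $\sigma$ we have $\rho^\ast u_x^\ast\ge\frac12\rho_0^-u_0>0$; the free boundary condition \eqref{g-free-cut} gives ${\bf u}^\ast\cdot{\bf n}_f=0$ on $\Gamma_{\rm cd}^{L,f}$; and the compatibility relations $\partial_r\varphi(x,0)=0$, $\psi(x,0)=0$ give $u_r^\ast(x,0)=0$, so the field is tangent to the axis as well. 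Consequently every streamline of $\rho^\ast{\bf u}^\ast$ starting on $\Gamma_{\rm en}^-$ is monotone in $x$ and exits through $\Gamma_{\rm ex}^{L,f}$, and these streamlines foliate $\overline{\mathcal N_{L,f}^-}$; in fact, at the fixed point $f$ the identity \eqref{fix-f-eq} shows that $\Gamma_{\rm cd}^{L,f}$ itself is a streamline.

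Because $\rho^\ast>0$, the operator $\rho^\ast{\bf u}^\ast\cdot\nabla$ has the same integral curves as the planar field $(\rho^\ast u_x^\ast,\rho^\ast u_r^\ast)$, so any $C^1$ solution of \eqref{Ite-3D2} must be constant along streamlines. I would make this explicit through the axisymmetric stream function $\Phi$ determined by $\partial_r\Phi=r\rho^\ast u_x^\ast$, $\partial_x\Phi=-r\rho^\ast u_r^\ast$, $\Phi(\cdot,0)\equiv0$: the divergence-free condition makes $\Phi$ well defined, $\Phi\in C^{2,\alpha}$ because $\rho^\ast{\bf u}^\ast\in C^{1,\alpha}$, and $\partial_r\Phi>0$ for $r>0$, so $r\mapsto\Phi(x,r)$ maps $[0,f(x)]$ strictly increasingly onto $[0,m_\ast]$ with $m_\ast:=\int_0^{1/2}t\,\rho^\ast u_x^\ast(0,t)\,dt$ independent of $x$ by \eqref{fix-f-eq}. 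Letting $\ell(x,r)\in[0,\frac12]$ be determined by $\Phi(0,\ell(x,r))=\Phi(x,r)$, the unique $C^1$ solution of \eqref{Ite-3D2} is
\[
S(x,r)=S_{\rm en}(\ell(x,r)),\qquad \Lambda(x,r)=\ell(x,r)\,\nu_{\rm en}(\ell(x,r)),
\]
uniqueness being immediate since a $C^1$ solution is constant on streamlines and matches the prescribed data at $x=0$. The compatibility conditions $\partial_xS\equiv\partial_x\Lambda\equiv0$ on $(\Gamma_{\rm en}^-\cap\{r\ge\frac12-\epsilon\})\cup\Gamma^{L,f}_{\rm ex}$ follow from \eqref{def-entrance-ep} together with the flatness of $\varphi-\varphi_0$ and $\psi$ at $x=0$ and $x=L$ established in Lemma~\ref{Pro-fix-S} and with $f'(0)=f'(L)=0$, which force $\partial_x\ell\equiv0$ on those sets.

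For the quantitative bounds, away from the axis $\ell$ is a $C^{2,\alpha}$ function of $(x,r)$ with $\|\ell-r\|$ controlled by $\|\rho^\ast{\bf u}^\ast-\rho_0^-u_0{\bf e}_x\|_{1,\alpha}$, so the asserted estimates for $S=S_{\rm en}\circ\ell$ and for $\Lambda$ (the composition of $s\mapsto s\,\nu_{\rm en}(s)$ with $\ell$) follow from the Hölder composition inequalities and $\|(S_{\rm en},r\nu_{\rm en})-(S_0,0)\|_{k,\alpha,\Gamma_{\rm en}^-}\le C\sigma$, all constants being independent of $L$ since the inputs are. I expect the main obstacle to be the analysis near the axis $r=0$, where $\rho^\ast u_r^\ast$ and $\partial_r\Phi$ vanish and $\Phi$ degenerates like $r^2$: to recover the $C^{1,\alpha}(\mathcal N_{L,f}^-)$ regularity in the three space variables and the $C^{2,\alpha}(\Omega_{L,f}^-)$ regularity in the meridian variables — and to see that $\Lambda$ vanishes on the axis and retains the form $r\mathcal V$ with $\mathcal V(x,0)=0$ — one should pass to the variable $r^2$ (equivalently use $\Phi$ as the transverse coordinate) and exploit the flatness of $(S_{\rm en},\nu_{\rm en})$ at $r=0$ recorded in \eqref{natural-comp-cond} and Remark~\ref{Rem1}; this is precisely the mechanism developed in \cite{bae20183}, which we adapt here, the only new feature being the extra bookkeeping caused by the flux coefficients depending on the approximate data $(S_\ast,\Lambda_\ast)$. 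Choosing $\sigma_4^{\ast\ast}\in(0,\sigma_5]$ small enough that Lemma~\ref{Lem-S-free} applies and $\rho^\ast u_x^\ast\ge\frac12\rho_0^-u_0$ completes the plan.
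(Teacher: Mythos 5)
Your proposal follows essentially the same route as the paper: the stream function $\Phi$ you construct is exactly the function $w(x,r)=\int_0^r s\,{\bf M}\cdot{\bf e}_x(x,s)\,ds$ used there, your $\ell$ is the paper's $\mathcal{R}_0=\mathcal{G}^{-1}\circ w$, the solution is written as the composition of the entrance data with this map, and both arguments defer the delicate axis analysis to the adaptation of \cite[Proposition 3.5]{bae20183}. The proposal is correct.
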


\begin{remark}
The estimate \eqref{S-est-two} is needed in \eqref{W-est} to prove the uniqueness of solutions.
\end{remark}

\begin{proof}[Proof of Lemma \ref{Pro-trans}.]

Define a function $w:\overline{\Omega_{L,f}^-}\rightarrow\mathbb{R}$ by
\begin{equation}\label{def-w}
w(x,r):=\int_0^r s{\bf M}\cdot{\bf e}_x(x,s)ds\quad\mbox{for}\quad (x,r)\in\overline{\Omega_{L,f}^-}
\end{equation}
for
$${\bf M}=H(S_{\ast},\nabla\varphi+{\bf t}(r,\psi,D\psi,\Lambda_{\ast}))\left(\nabla\varphi+\frac{1}{r}\partial_r(r\psi){\bf e}_x-(\partial_x\psi){\bf e}_r\right),$$
where ${\bf t}$ and $H$ are given by \eqref{def-T} and \eqref{def-H-G}, respectively.
For such $w$, we consider an invertible function $\mathcal{G}:[0,1/2]\rightarrow [w(0,0),w(0,1/2)]$ satisfying
\begin{equation}\label{def-G0}
\mathcal{G}(r)=w(0,r),
\end{equation}
and define a function $\mathcal{R}_0:\overline{\Omega_{L,f}^-}\rightarrow[0,1/2]$ by
\begin{equation}\label{3D-R0}
\mathcal{R}_0(x,r):=\mathcal{G}^{-1}\circ w(x,r).
\end{equation}
By adjusting the proof of \cite[Proposition 3.5]{bae20183}, we can obtain a unique solution $\mathcal{W}$ of \eqref{Ite-3D2} represented in
\begin{equation}\label{def-W}
\mathcal{W}(x,r)=\mathcal{W}_{\rm en}(\mathcal{R}_0(x,r))\quad\mbox{for}\quad\mathcal{W}_{\rm en}:=(S_{\rm en},r\nu_{\rm en}),
\end{equation}
and the estimate
\begin{equation*}
\|\mathcal{W}-\mathcal{W}_0^-\|_{1,\alpha,\mathcal{N}_{L,f}^-}\le C^{\ast}\|\mathcal{W}_{\rm en}-\mathcal{W}_0^-\|_{1,\alpha,\Gamma_{\rm en}^-}\quad\mbox{for}\quad\mathcal{W}_0^-:=(S_0^-,0),
\end{equation*}
where the constant $C^{\ast}>0$ depends only on the data and $\alpha$ but independent of $L$.

Since $\mathcal{R}_0$ satisfies
\begin{equation*}\label{est-R0}
\|\mathcal{R}_0\|_{2,\alpha,\Omega_{L,f}^-}\le C\|{\bf M}\|_{1,\alpha,\Omega_{L,f}^-},
\end{equation*}
we also have 
\begin{equation*}
\begin{split}
\|\mathcal{W}-\mathcal{W}_0^-\|_{2,\alpha,\Omega_{L,f}^-}
&=\|\mathcal{W}_{\rm en}\circ\mathcal{R}_0-\mathcal{W}_0^-\|_{2,\alpha,\Omega_{L,f}^-}\\
&\le C^{\ast\ast}\|\mathcal{W}_{\rm en}-\mathcal{W}_0^-\|_{2,\alpha,\partial\Omega_{L,f}^-\cap\{x=0\}}
\end{split}
\end{equation*}
for a constant $C^{\ast\ast}>0$ depending only on the data and $\alpha$ but independent of $L$.
The proof of Lemma \ref{Pro-trans} is completed.
\end{proof}

{\bf 2.} (Extension of $(S,\Lambda)$ onto $\mathcal{N}_{L,3/4}^-$)
For $\mathcal{N}_{L,2f}^-:=\mathcal{N}_L\cap\{r<2f(x)\}$ and $\mathcal{N}_{L,2}^-:=\mathcal{N}_L\cap\{r<2\}$, consider a transformation
$\mathfrak{P}_{f}:\overline{\mathcal{N}_{L,2f}^-}\rightarrow \overline{\mathcal{N}_{L,2}^-}$ defined by
\begin{equation*}\label{trans-flat}
\mathfrak{P}_f(x_1,x_2,x_3)=\left(x_1,\frac{x_2}{f(x_1)},\frac{x_3}{f(x_1)}\right).
\end{equation*}
Note that we have shown that $f\in \mathcal{F}(M_2)$ for $\mathcal{F}(M_2)$ given by \eqref{F-set} therefore we have
$f(x_1)\ge \frac 38$ on [0, L], thus the mapping $\mathfrak{P}_f$ is well defined.
And, $\mathfrak{P}_{f}$ is invertible with
$$\mathfrak{P}_{f}^{-1}(y_1,y_2,y_3)=\left(y_1,f(y_1)y_2,f(y_1)y_3\right)\quad\mbox{for}\quad(y_1,y_2,y_3)\in\overline{\mathcal{N}_{L,2}^-}.$$
For the unique solution $\mathcal{W}$ of the initial-value problem \eqref{Ite-3D2},
define $\mathcal{W}^e$ by
\begin{equation*}
\mathcal{W}^e(y_1,y_2,y_3):=\sum_{i=1}^3 c_i\left(\mathcal{W}\circ\mathfrak{P}_f^{-1}\right)\left(y_1,\frac{{ r_{\rm e}}(y_2,y_3)y_2}{i},\frac{{r_{\rm e}}(y_2,y_3)y_3}{i}\right)
\end{equation*}
for $1<\sqrt{y_2^2+y_3^3}\le2,$
where ${r_{\rm e}}$ is defined by
$${r_{\rm e}}(y_2,y_3):=\frac{2-\sqrt{y_2^2+y_3^3}}{\sqrt{y_2^2+y_3^3}}.$$
Here,
$c_1=6$, $c_2=-32$, and $c_3=27$, which are constants determined by the system of equations
\begin{equation*}
\sum_{i=1}^3 c_i\left(-\frac{1}{i}\right)^m=1,\quad m=0,1,2.
\end{equation*}
For such $\mathcal{W}^e$, define an extension of $\mathcal{W}$ into $\mathcal{N}_{L,4/3}^-$ as follows:
\begin{equation}\label{ext-W-def}
\mathcal{E}_f(\mathcal{W})(x_1,x_2,x_3):=\left\{\begin{split}
\mathcal{W}(x_1,x_2,x_3)\quad\mbox{for }&  \sqrt{x_2^2+x_3^2}\le f(x_1),\\
\mathcal{W}^e\circ\mathfrak{P}_f(x_1,x_2,x_3)\quad\mbox{for }& f(x_1)< \sqrt{x_2^2+x_3^3}<\frac{3}{4}.
\end{split}\right.
\end{equation}
Since $f(x_1)\ge \frac 38$ on $[0,L]$, $\mathcal{E}_f$ is well defined by \eqref{ext-W-def}, and it satisfies
\begin{equation}\label{W-f-est}
\|\mathcal{E}_f(\mathcal{W})-\mathcal{W}_0^-\|_{1,\alpha,\mathcal{N}_{L,3/4}^-}\le C\|\mathcal{W}-\mathcal{W}_0^-\|_{1,\alpha,\mathcal{N}_{L,f}^-}.
\end{equation}
We define an iteration mapping $\mathcal{J}:\mathcal{P}(M_1)\rightarrow \left[C^{1,\alpha/2}(\overline{\mathcal{N}_{L,3/4}^-})\right]^2$ by
\begin{equation*}
\mathcal{J}(\mathcal{W}_{\ast})=\mathcal{E}_f(\mathcal{W}).
\end{equation*}
By \eqref{W-f-est} and Lemma \ref{Pro-trans}, we have the estimate
\begin{equation*}
\|\mathcal{E}_f(\mathcal{W})-\mathcal{W}_0^-\|_{1,\alpha,\mathcal{N}_{L,3/4}^-}\le C\|\mathcal{W}-\mathcal{W}_0^-\|_{1,\alpha,\mathcal{N}_{L,f}^-}\le C_1^{\star}\sigma
\end{equation*}
for a constant $C_1^{\star}>0$ depending only on the data and $\alpha$ but independent of $L$.

{\bf 3.} (Further estimate of $\frac{\Lambda}{r}$$(=\mathcal{V})$)
By \eqref{3D-Cut-BC} and \eqref{def-W}, $\Lambda$ is represented as
$$\Lambda(x,r)=\mathcal{R}_0(x,r)\nu_{\rm en}(\mathcal{R}_0(x,r))\quad\mbox{for}\quad(x,r)\in\Omega_{L,f}^-,$$
where $\mathcal{R}_0$ is given by \eqref{3D-R0}.
Set $\mathcal{V}$ as
\begin{equation}\label{V-est}
\mathcal{V}(x,r)=\left\{\begin{split}
\frac{\mathcal{E}_f(\Lambda)(x,r)}{r}\quad&\mbox{for}\quad (x,r)\in[0,L]\times(f(x),\frac{3}{4}),\\
\frac{\mathcal{R}_0(x,r)}{r}\nu_{\rm en}(\mathcal{R}_0(x,r))\quad&\mbox{for}\quad(x,r)\in[0,L]\times(0,f(x)],\\
0\quad&\mbox{for}\quad (x,r)\in[0,L]\times\{0\}.
\end{split}\right.
\end{equation}
By the compatibility condition $\nu_{\rm en}'(0)=0$ and  the representation
\begin{equation*}
\begin{split}
\partial_r\mathcal{V}(x,r)&=\frac{\mathcal{R}_0(x,r)}{r}\nu_{\rm en}'(\mathcal{R}_0(x,r))\partial_r\mathcal{R}_0(x,r)\\
&\quad+\left(\frac{\partial_r\mathcal{R}_0(x,r)}{r}-\frac{\mathcal{R}_0(x,r)}{r^2}\right)\nu_{\rm en}(\mathcal{R}_0(x,r)),
\end{split}
\end{equation*}
we get
\begin{equation*}
\lim_{r\rightarrow 0+}\partial_r\mathcal{V}(x,r)=\left(\partial_r\mathcal{R}_0(x,0)\right)^2\nu_{\rm en}'(0)=0.
\end{equation*}
With using this observation, it can be directly checked that
\begin{equation}\label{V-est-2}
\|\mathcal{V}\|_{1,\alpha,\Omega_{L,f}^-}\le C\sigma.
\end{equation}
By \eqref{V-est}-\eqref{V-est-2} and the definition of $\mathcal{E}_f(\Lambda)$, we have the estimate
$$\|\mathcal{V}\|_{1,\alpha,\Omega_{L,3/4}^-}\le C_2^{\star}\sigma$$
for a constant $C_2^{\star}>0$ depending only on the data and $\alpha$ but independent of $L$.

{\bf 4.}
In this step, we finally choose $(M_1,\sigma_4)$ so that $\mathcal{J}$ has a unique fixed point in $\mathcal{P}(M_1)$.

By a direct computation, one can easily check that there exists a constant $\epsilon_4>0$ depending only on the data and $\alpha$ so that if
$$(M_1+1)\sigma\le \epsilon_4,$$
 then
\begin{equation*}
\|H(S_{\ast},{\bf q}(r,\psi,D\psi,D\varphi,\Lambda_{\ast})){\bf q}(r,\psi,D\psi,D\varphi,\Lambda_{\ast})-\rho_0^-u_0{\bf e}_x\|_{0,\mathcal{N}_{L,f}^-}\le C_3^{\star}(M_1+1)\sigma
\end{equation*}
for a constant $C_3^{\star}>0$ depending only on the data and $\alpha$ but independent of $L$.
If it holds that
$$\sigma\le\frac{1}{2C_3^{\star}(M_1+1)} ,$$
then we obtain from the previous estimate that
\begin{equation}\label{rhou-positive}
\|H(S_{\ast},{\bf q}(r,\psi,D\psi,D\varphi,\Lambda_{\ast})){\bf q}(r,\psi,D\psi,D\varphi,\Lambda_{\ast})-\rho_0^-u_0{\bf e}_x\|_{0,\mathcal{N}_{L,f}^-}\le \frac{\rho_0^-u_0}{2}.
\end{equation}
Also, by the boundary conditions in \eqref{S-Free-BP} for $(\varphi,\psi)$ and the definition of $\varphi_{\rm en}$ given in \eqref{def-varphi-en},  we have
\begin{equation}\label{partial-x-S}
\partial_r\varphi-\partial_x\psi\equiv0\quad\mbox{on}\quad (\Gamma_{\rm en}^-\cap\{r\ge\frac{1}{2}-\epsilon\})\cup\Gamma_{\rm ex}^{L,f}.
\end{equation}
It follows from \eqref{Ite-3D2} and \eqref{rhou-positive}-\eqref{partial-x-S} that
$$\left(\partial_x\mathcal{E}_f(S),\partial_x\mathcal{E}_f(\Lambda)\right)\equiv {\bf 0}\quad \mbox{on}\quad (\Gamma_{\rm en}^-\cap\{r\ge\frac{1}{2}-\epsilon\})\cup \Gamma_{\rm ex}^{L,3/4}.$$

Choose $M_1$ and $\sigma_4^{\ast}$ as
\begin{equation}\label{Sigma4star}
M_1=2\left(C_1^{\star}+C_2^{\star}\right)\quad\mbox{and}\quad\sigma_4^{\ast}=\min\left\{\sigma_5,\sigma_4^{\ast\ast},\frac{\epsilon_4}{M_1+1}, \frac{1}{2C_3^{\star}(M_1+1)}\right\}
\end{equation}
with $\sigma_5$ defined in \eqref{Sigma5} and $\sigma_4^{\ast\ast}$ given in Lemma \ref{Pro-trans}.
Under such choices of $(M_1,\sigma_4^{\ast})$, the mapping $\mathcal{J}$ maps $\mathcal{P}(M_1)$ into itself whenever $\sigma\le\sigma_4^{\ast}$.

The iteration set $\mathcal{P}(M_1)$ given by \eqref{Ent-Ang-set} is convex and compact subset in $[C^{1,\alpha/2}(\overline{\mathcal{N}_{L,3/4}^-})]^2$.
Suppose that a sequence $\{\mathcal{W}_{\ast}^{(k)}\}_{k=1}^{\infty}:=\{(S_{\ast}^{(k)},\Lambda_{\ast}^{(k)})\}_{k=1}^{\infty}\subset\mathcal{P}(M_1)$ converges in $C^{1,\alpha/2}(\overline{\mathcal{N}_{L,3/4}^-})$ to $\mathcal{W}_{\ast}^{(\infty)}:=(S_{\ast}^{(\infty)},\Lambda_{\ast}^{(\infty)})\in\mathcal{P}(M_1)$.
For each $k\in\mathbb{N}\cup\{\infty\}$, set
\begin{equation*}
\mathcal{W}^{(k)}:=\mathcal{J}(\mathcal{W}_{\ast}^{(k)}).
\end{equation*}
And, let $(f^{(k)},\varphi^{(k)},\psi^{(k)})\in C^{2,\alpha}([0,L])\times C^{2,\alpha}(\overline{\mathcal{N}_{L,f^{(k)}}^-})\times C^{2,\alpha}(\overline{\Omega_{L,f^{(k)}}^-})$ be the unique solution of Problem \ref{Prob4-Fix-S} associated with $\mathcal{W}_{\ast}=\mathcal{W}_{\ast}^{(k)}$.
By the uniqueness of a solution for Problem \ref{Prob4-Fix-S}, $\{f^{(k)}\}_{k=1}^{\infty}$ is convergent in $C^{2,\alpha/2}([0,L])$.
Denote its limit by $f^{(\infty)}$ and the unique solution of \eqref{Fixed-BVP} associated with $(f_{\ast},\mathcal{W}_{\ast})=(f^{(\infty)},\mathcal{W}_{\ast}^{(\infty)})$ by $(\varphi^{(\infty)},\psi^{(\infty)})$.
Define a transformation $T^{(k)}:\overline{\mathcal{N}_{L,f^{(\infty)}}^-}\rightarrow\overline{\mathcal{N}_{L,f^{(k)}}^-}$ by
\begin{equation*}
T^{(k)}(x_1,x_2,x_3)=\left(x_1,\sqrt{\frac{f^{(k)}(x)}{f^{(\infty)}(x)}}x_2,\sqrt{\frac{f^{(k)}(x)}{f^{(\infty)}(x)}}x_3\right),
\end{equation*}
and set
\begin{equation*}
\begin{split}
{\bf M}^{(k)}:=H&\left(S_{\ast}^{(k)},\nabla\varphi^{(k)},{\bf t}(r,\psi^{(k)},D\psi^{(k)},\Lambda_{\ast}^{(k)})\right)\left(\nabla\varphi^{(k)}+\frac{1}{r}\partial_r(r\psi^{(k)}){\bf e}_x-(\partial_x\psi^{(k)}){\bf e}_r\right),
\end{split}
\end{equation*}
where  ${\bf t}$ and $H$ are given by \eqref{def-T} and \eqref{def-H-G}, respectively.
Then ${\bf M}^{(k)}\circ T^{(k)}$ converges to ${\bf M}^{(\infty)}$ in $C^{1,\alpha/2}(\overline{\mathcal{N}_{L,f^{(\infty)}}^-}).$
By Lemma \ref{Pro-trans}, $\mathcal{W}^{(k)}$ converges to $\mathcal{W}^{(\infty)}$ in $C^{1,\alpha/2}(\overline{\mathcal{N}_{L,3/4}^-})$.
This implies that $\mathcal{J}(\mathcal{W}_{\ast}^{(k)})$ converges to $\mathcal{J}(\mathcal{W}_{\ast}^{(\infty)})$ in $C^{1,\alpha/2}(\overline{\mathcal{N}_{L,3/4}^-})$.
Thus $\mathcal{J}$ is a continuous map in $[C^{1,\alpha/2}(\overline{\mathcal{N}_{L,3/4}^-})]^2$.
Applying the Schauder fixed point theorem yields that $\mathcal{J}$ has a fixed point $\mathcal{W}=\mathcal{E}_f(S,\Lambda)\in\mathcal{P}(M_1)$.
For such $\mathcal{W}$, let $(f,\varphi,\psi)$ be the unique solution of Problem \ref{Prob4-Fix-S}, and let us set $(S,\Lambda):=\left.\mathcal{E}_f(S,\Lambda)\right|_{\mathcal{N}_{L,f}^-}$.
Then $(f,S,\Lambda,\varphi,\psi)$ solves Problem \ref{Prob3-Cut} provided that $\sigma\le\sigma_4^{\ast}$.

Finally, we prove the uniqueness of a fixed point of $\mathcal{J}$.
Let $(f^{(1)},\mathcal{W}^{(1)},\varphi^{(1)},\psi^{(1)})$ and $(f^{(2)},\mathcal{W}^{(2)},\varphi^{(2)},\psi^{(2)})$ be two solutions to Problem \ref{Prob3-Cut}, 
and suppose that each solution satisfies the estimates given in \eqref{3D-Prop-est} of Proposition \ref{3D-Prop4.1}.
For a transformation $\mathfrak{T}:\overline{\mathcal{N}_{L,f^{(1)}}^-}\rightarrow\overline{\mathcal{N}_{L,f^{(2)}}^-}$ defined in \eqref{TT}, set
\begin{equation*}
\left\{\begin{split}
&\widetilde{\phi}:=\varphi^{(1)}-\left(\varphi^{(2)}\circ\mathfrak{T}\right),\quad \widetilde{\psi}:=\psi^{(1)}-\left(\psi^{(2)}\circ\mathfrak{T}\right),\\
&\widetilde{\mathcal{W}}:=\mathcal{W}^{(1)}-\left(\mathcal{W}^{(2)}\circ\mathfrak{T}\right),\quad \widetilde{f}:=f^{(1)}-f^{(2)}.
\end{split}\right.
\end{equation*}
By a direct computation, it can be checked that there exists a constant $\sigma_4'>0$ depending only on the data and $\alpha$ but independent of $L$ so that if $\sigma\le \sigma_4'$, then
\begin{equation}\label{W-est}
\begin{split}
\|\widetilde{\mathcal{W}}\|_{\alpha,\mathcal{N}_{L,f^{(1)}}^-}&+\|\partial_r\widetilde{\mathcal{W}}\|_{\alpha,\mathcal{N}_{L,f^{(1)}}^-}\\
&\le C\sigma\left(\|\widetilde{\phi}\|_{1,\alpha,\mathcal{N}_{L,f^{(1)}}^-}+\|\widetilde{\psi}\|_{1,\alpha,\Omega_{L,f^{(1)}}^-}+\|\widetilde{f}\|_{1,\alpha,(0,L)}\right)\\
&\le C\sigma\|\widetilde{f}\|_{1,\alpha,(0,L)}.
\end{split}
\end{equation}
By adjusting the proof of Lemma \ref{Lem-S-free} with using the estimate \eqref{W-est}, we have
\begin{equation}\label{3D-g}
\|\widetilde{f}\|_{1,\alpha,(0,L)}\le C_5^{\star}\sigma\|\widetilde{f}\|_{1,\alpha,(0,L)}
\end{equation}
for a constant $C_5^{\star}>0$ depending only on the data and $\alpha$ but independent of $L$.
We choose $\sigma_4$ as
\begin{equation*}\label{Sigma4}
\sigma_4=\min\left\{\sigma_4^{\ast},\sigma_4',\frac{1}{2C_5^{\star}}\right\}
\end{equation*}
with $\sigma_4^{\ast}$ defined in \eqref{Sigma4star}, so that we obtain from \eqref{3D-g} that $f^{(1)}=f^{(2)}$ for $\sigma\le\sigma_4$.
Then, by \eqref{W-est}, we have $\mathcal{W}^{(1)}=\mathcal{W}^{(2)}.$
Therefore
$$(f^{(1)},\mathcal{W}^{(1)},\varphi^{(1)},\psi^{(1)})=(f^{(2)},\mathcal{W}^{(2)},\varphi^{(2)},\psi^{(2)})$$
by Lemma \ref{Lem-S-free}.
The proof  of Proposition \ref{3D-Prop4.1} is completed.
\qed

\section{Free boundary problem in the infinitely long cylinder $\mathcal{N}$}\label{3D-sec-ex}
\subsection{Proof of Theorem \ref{3D-Thm-HD}}\label{5-1}
Let $\sigma_4$ be from Proposition \ref{3D-Prop4.1} and suppose that $\sigma\le \sigma_4$.
By Proposition \ref{3D-Prop4.1}, Problem \ref{Prob3-Cut} has a solution for each $L>0$.
For each $m\in\mathbb{N}$, let $(f^{(m)},S^{(m)},\Lambda^{(m)},\varphi^{(m)},\psi^{(m)})$ be a solution of Problem \ref{Prob3-Cut} in $\mathcal{N}_{m+20}:=\mathcal{N}\cap\{0<x<m+20\}$, and suppose that the solution satisfies the estimates \eqref{3D-Prop-est} given in Proposition \ref{3D-Prop4.1}.
Then, using the Arzel\'a-Ascoli theorem and a diagonal procedure, we can extract a subsequence, still written as $\{(f^{(m)},S^{(m)},\Lambda^{(m)},\varphi^{(m)},\psi^{(m)})\}_{m\in\mathbb{N}}$ so that the subsequence converges to functions $(f^{\ast},S^{\ast},\Lambda^{\ast},\varphi^{\ast},\psi^{\ast})$ in the following sense: for any $L>0$,
\begin{itemize}
\item[(i)] $f^{(m)}$ converges to $f^{\ast}$ in $C^2$ in $[0,L]$.
\item[(ii)] $(S^{(m)}\circ{T}^{(m)},\Lambda^{(m)}\circ{T}^{(m)})$ converges to $(S^{\ast},\Lambda^{\ast})$ in $C^1$ in $\overline{\mathcal{N}_{L,f^{\ast}}^-}$, where ${T}^{(m)}:\overline{\mathcal{N}_{m+20,f^{\ast}}^-}\rightarrow\overline{\mathcal{N}_{m+20,f^{(m)}}^-}$ is defined by
\begin{equation*}
T^{(m)}(x_1,x_2,x_3)=\left(x_1,x_2\sqrt{\frac{f^{(m)}(x_1)}{f^{\ast}(x_1)}},x_3\sqrt{\frac{f^{(m)}(x_1)}{f^{\ast}(x_1)}}\right).
\end{equation*}
\item[(iii)] $\left(\varphi^{(m)}\circ{T}^{(m)},(\psi^{(m)}{\bf e}_{\theta})\circ{T}^{(m)}\right)$ converges to $\left(\varphi^{\ast},\psi^{\ast}{\bf e}_{\theta}\right)$ in $C^2$ in $\overline{\mathcal{N}_{L,f^{\ast}}^-}$.
\item[(iv)] $\psi^{(m)}\circ{T}^{(m)}$ converges to $\psi^{\ast}$ in $C^2$ in $\overline{\Omega_{L,f^{\ast}}^-}$.
\end{itemize}
By a change of variables and passing to the limit $m\rightarrow\infty$, one can easily show that
$(f^{\ast},S^{\ast},\Lambda^{\ast},\varphi^{\ast},\psi^{\ast})$ is a solution to the free boundary problem \eqref{3D-H} with boundary conditions \eqref{g-free-cond} and \eqref{3D-BC-C}.
Furthermore, it follows from the $C^2$-convergence of $\{(f^{(m)},\varphi^{(m)},\psi^{(m)}{\bf e}_{\theta})\}_{m\in\mathbb{N}}$, $C^1$-convergence of $\{(S^{(m)},\Lambda^{(m)})\}_{m\in\mathbb{N}}$, and the estimates \eqref{3D-Prop-est} given in Proposition \ref{3D-Prop4.1} that  $(f^{\ast},S^{\ast},\Lambda^{\ast},\varphi^{\ast},\psi^{\ast}{\bf e}_{\theta})$ satisfy the estimates \eqref{Thm-HD-est} for a constant $C>0$ depending only on the data and $\alpha$.
\qed

\subsection{Proof of Theorem \ref{3D-MainThm}(a)}\label{5-2}
Let $\sigma_3$ be from Theorem \ref{3D-Thm-HD}, and suppose that the functions $(S_{\rm en},\nu_{\rm en},u_r^{\rm en})$ satisfy \eqref{enu-sigma}.
By Theorem \ref{3D-Thm-HD}, the free boundary problem \eqref{3D-H} with \eqref{g-free-cond} and \eqref{3D-BC-C} has a solution $(g_D,S,\Lambda,\varphi,\psi)$ that satisfies the estimates \eqref{Thm-HD-est}.
For such a solution, we define $({\bf u},\rho, p)$ by
\begin{equation*}
\left.\begin{split}
&{\bf u}:=\left(\partial_x\varphi+\frac{1}{r}\partial_r(r\psi)\right){\bf e}_x+(\partial_r\varphi-\partial_x\psi){\bf e}_r+\frac{\Lambda}{r}{\bf e}_{\theta},\\
&\rho:=H\left(S,{\bf u}\right),\quad p:=S\rho^{\gamma}\quad\mbox{in}\quad \overline{\mathcal{N}_{g_D}^-},
\end{split}\right.
\end{equation*}
where $H$ is given by \eqref{def-H-G}.
It follows from the estimates \eqref{Thm-HD-est} given in Theorem \ref{3D-Thm-HD} that $(g_D,{\bf u},\rho,p)$ satisfy the estimate \eqref{Thm2.1-uniq-est}.
Then, one can choose a small constant $\sigma_1\in(0,\sigma_3]$ depending only on the data and $\alpha$ such that if $\sigma\le\sigma_1$, then
$(g_D, {\bf u}, \rho, p)$ satisfy $\rho\ge\frac{1}{2}\rho_0^->0$ and $c^2-|{\bf u}|^2\ge\frac{1}{2}\left((c_0^-)^2-u_0^2\right)>0$ in $\overline{\mathcal{N}_{g_D}^-}$, thus solve Problem \ref{3D-Problem2}.
Here, $c_0^-$ is given by $c_0^-=\sqrt{\frac{\gamma p_0}{\rho_0^-}}$.
The proof of Theorem \ref{3D-MainThm}(a) is completed.
\qed

\subsection{Proof of Theorem \ref{3D-MainThm}(b)}\label{sec-far}

Let $\sigma_1$ be from Theorem \ref{3D-MainThm}(a).
By Theorem \ref{3D-MainThm}(a), if $\sigma\le\sigma_1$, then there exists a solution $(g_D,{\bf u},\rho,p)$ with ${\bf u}=u_x{\bf e}_x+u_r{\bf e}_r+u_{\theta}{\bf e}_{\theta}$ of Problem \ref{3D-Problem2} satisfying the estimate \eqref{Thm2.1-uniq-est}.

Set
\begin{equation*}
\begin{split}
\Omega_{g_D}^-&:=\left\{(x,r)\in\mathbb{R}^2: x> 0, 0< r<g_D(x)\right\},\\
\Gamma_{\rm en}^{g_D}&:=\partial\Omega_{g_D}^-\cap\{x=0\},\quad
\Gamma_{\rm cd}^{g_D}:=\partial\Omega_{g_D}^-\cap\{r=g_D(x)\}.\\
\end{split}
\end{equation*}
The equation
$\partial_x(\rho u_x)+\partial_r(\rho u_r)+\frac{\rho u_r}{r}=0$ in $\Omega_{g_D}^-$, stated in \eqref{3D-ang}, can be rewritten as
\begin{equation*}\label{conti-eq}
\frac{\partial_x(r\rho u_x)+\partial_r(r\rho u_r)}{r}=0\quad\mbox{in}\quad\Omega_{g_D}^-.
\end{equation*}
With using this equation, it can be directly checked that the function $\mathfrak{h}$ given by
\begin{equation*}
\mathfrak{h}(x,r):=\int_0^rt\rho u_x(x,t)dt\quad\mbox{for}\quad (x,r)\in\overline{\Omega_{g_D}^-}
\end{equation*}
satisfies
\begin{equation}\label{3D-st-f}
\partial_x\mathfrak{h}=-r\rho u_r,\quad\partial_r\mathfrak{h}=r\rho u_x.
\end{equation}
By \eqref{def-w}-\eqref{def-W}, the entropy $S(=p/\rho^{\gamma})$ and angular momentum density $\Lambda(=ru_{\theta})$ are represented as
\begin{equation}\label{S-Lambda}
\begin{split}
&S(x,r)=S_{\rm en}\circ\mathcal{G}^{-1}(\mathfrak{h}(x,r))=:{S}(\mathfrak{h}(x,r)),\\
&\Lambda(x,r)=\Lambda_{\rm en}\circ\mathcal{G}^{-1}(\mathfrak{h}(x,r)):={\Lambda}(\mathfrak{h}(x,r))\quad\mbox{for } (x,r)\in\overline{\Omega_{g_D}^-},
\end{split}
\end{equation}
where $\mathcal{G}$ is given by  \eqref{def-G0} associated with $w=\mathfrak{h}$ and $\Lambda_{\rm en}(r):=r\nu_{\rm en}(r)$ for $r\in[0,1/2]$.
Since $S_{\rm en}$, $\Lambda_{\rm en}$, and $\mathcal{G}^{-1}$
are differentiable, $S$ and $\Lambda$ are differentiable functions of $\mathfrak{h}$.
Set
$$\mathfrak{S}(\mathfrak{h}):=\frac{\gamma}{\gamma-1}{S}(\mathfrak{h}).$$
Then, by the definition of the Bernoulli invariant \eqref{Ber-inv}, we have
\begin{equation}\label{3D-Ber}
B_0^-r^2\rho^2=\frac{1}{2}\left(|\nabla\mathfrak{h}|^2+\Lambda(\mathfrak{h})^2\rho^2\right)+r^2\mathfrak{S}(\mathfrak{h})\rho^{\gamma+1}\quad\mbox{in}\quad \overline{\Omega_{g_D}^-},
\end{equation}
where $\nabla=(\partial_x,\partial_r)$.
By differentiating the equation \eqref{3D-Ber} with respect to $x$ and $r$, we have
\begin{equation}\label{3D-par-rho}
\begin{split}
&\partial_x\rho=-\frac{(\partial_x\mathfrak{h})(\partial_{xx}\mathfrak{h}+\Lambda\Lambda'\rho^2+r^2\mathfrak{S}'\rho^{\gamma+1})+(\partial_r\mathfrak{h})(\partial_{rx}\mathfrak{h})}{r^2(\gamma+1)\mathfrak{S}\rho^{\gamma}-2r^2B_0^-\rho+\Lambda^2\rho},\\
&\partial_r\rho=-\frac{(\partial_x\mathfrak{h})(\partial_{xr}\mathfrak{h}-\partial_x\mathfrak{h})+(\partial_r\mathfrak{h})(\partial_{rr}\mathfrak{h}+\Lambda\Lambda'\rho^2+r^2\mathfrak{S}'\rho^{\gamma+1}-\partial_r\mathfrak{h})-\Lambda^2\rho^2}{r^2(\gamma+1)\mathfrak{S}\rho^{\gamma}-2r^2B_0^-\rho+\Lambda^2\rho},
\end{split}
\end{equation}
where $'$ denotes the derivative with respect to $\mathfrak{h}$.
Using \eqref{3D-st-f}-\eqref{3D-par-rho}, the equation
\begin{equation}\label{ES-Far-eq}
\rho(u_x\partial_x+u_r\partial_r)u_r-\frac{\rho u_{\theta}^2}{r}+\partial_rp=0\quad\mbox{in}\quad\Omega_{g_D}^-
\end{equation}
in \eqref{3D-ang} can be rewritten as
\begin{equation}\label{3D-ES-st}
-\left(\frac{\partial_r\mathfrak{h}}{r}\right)\nabla\cdot\left(\frac{\nabla\mathfrak{h}}{r\rho}\right)-\frac{(\partial_r\mathfrak{h})\mathfrak{S}'\rho^{\gamma}}{\gamma}-\frac{(\partial_r\mathfrak{h})\Lambda\Lambda'\rho}{r^2}=0\quad\mbox{in}\quad\Omega_{g_D}^-.
\end{equation}
We multiply \eqref{3D-ES-st} by $r/(\partial_r\mathfrak{h})$ to get
\begin{equation}\label{3D-Stream}
\nabla\cdot\left(\frac{\nabla\mathfrak{h}}{r\rho}\right)=-\frac{r}{\gamma}\mathfrak{S}'\rho^{\gamma}-\frac{\Lambda\Lambda'\rho}{r}\quad\mbox{in}\quad\Omega_{g_D}^-.
\end{equation}
Set
$$\omega:=\partial_x\mathfrak{h}$$
and differentiate \eqref{3D-Stream} with respect to $x$ to get the following equation for $\omega$:
\begin{equation}\label{3D-St-Diff}
\partial_i\left(\frac{\mathfrak{q}_{ij}}{r\rho^2}\partial_j\omega\right)+\partial_i\left(\frac{\mathfrak{q}_1\partial_i\mathfrak{h}}{r\rho^2}\omega\right)
=\mathfrak{q}_2\omega+\mathfrak{q}_3(\partial_i\mathfrak{h})(\partial_i\omega)\quad\mbox{in}\quad \Omega_{g_D}^-,
\end{equation}
where
\begin{equation}\label{def-q12}
\begin{split}
&\mathfrak{O}:=r^2(\gamma+1)\mathfrak{S}\rho^{\gamma}-2r^2B_0^-\rho+\Lambda^2\rho,\\
&\mathfrak{q}_{ij}:=\rho\delta_{ij}+\frac{(\partial_i\mathfrak{h})(\partial_j\mathfrak{h})}{\mathfrak{O}},\\
&\mathfrak{q}_1:=\frac{\Lambda\Lambda'\rho^2+r^2\mathfrak{S}'\rho^{\gamma+1}}{\mathfrak{O}},\\
&\mathfrak{q}_2
:=-\frac{r}{\gamma}\mathfrak{S}''\rho^{\gamma}-\frac{(\Lambda')^2\rho}{r}-\frac{\Lambda\Lambda''\rho}{r}+\frac{(\Lambda\Lambda'\rho^2+r^2\mathfrak{S}'\rho^{\gamma+1})^2}{r\rho^2\mathfrak{O}},\\
&\mathfrak{q}_3:=\frac{1}{\mathfrak{O}}\left(r\mathfrak{S}'\rho^{\gamma-1}+\frac{\Lambda\Lambda'}{r}\right).
\end{split}
\end{equation}

Note that ${\bf u}$ is represented by \eqref{3D-u}, for $(\varphi, \psi, h)$ solving the equations \eqref{3D-H}. Similarly to \eqref{3D-psi}, we rewrite the second equation in \eqref{3D-H} as
\begin{equation*}
-\left(\partial_{xx}+\frac{1}{r}\partial_r(r\partial_r)-\frac{1}{r^2}\right)\psi=\frac{1}{{\bf u}\cdot{\bf e}_x}\left(\frac{H^{\gamma-1}(S,{\bf u})}{\gamma-1}\partial_rS+\frac{\Lambda}{r^2}\partial_r\Lambda\right)
\quad\mbox{in}\quad\Omega_{g_D}^-.
\end{equation*}
By Theorem \ref{3D-MainThm}(a) and Lemma \ref{Pro-trans}, the right-hand side of this equation is $C^{1,\alpha}$ in $\Omega_{g_D}^-$, therefore we have $\psi\in C^{3,\alpha}(\Omega_{g_D}^-)$. Next, we regard the first equation in \eqref{3D-H} as a second order quasilinear equation for $\varphi$. By Theorem \ref{3D-MainThm}(a), this equation is uniformly elliptic. Since $\varphi$ is $C^{2,\alpha}$ in $\mathcal{N}_{g_D}^{-}$, and $\psi\in C^{3,\alpha}(\Omega_{g_D}^-)$, we obtain that $\varphi$ is $C^{3,\alpha}$ in $\Omega_{g_D}^-$. 
And, this implies that $\mathfrak{h}\in C^{3,\alpha}(\Omega_{g_D}^-)$, thus the equation \eqref{3D-St-Diff} is well-defined.

By the boundary conditions \eqref{Prob2-BC-ent} and the compatibility condition $u_r=0$ on $\{r=0\}$, $\omega$ satisfies
\begin{equation}\label{omega-BC-0}
\omega=-r\rho u_r^{\rm en}\quad\mbox{on}\quad\Gamma_{\rm en}^{g_D},\quad\omega=0\quad\mbox{on}\quad\partial\Omega_{g_D}^-\cap\{r=0\}.
\end{equation}

Next, we compute a conormal boundary condition for \eqref{3D-St-Diff} on $\Gamma_{\rm cd}^{g_D}$.

We consider the expression
\begin{equation}\label{3D-fxr2}
(\partial_x\mathfrak{h})^2+(\partial_r\mathfrak{h})^2=\mathcal{C}_1(g_D(x))^2-\mathcal{C}_2\quad\mbox{on}\quad\Gamma_{\rm cd}^{g_D}=\partial\Omega_{g_D}^-\cap\{r=g_D(x)\}
\end{equation}
for
\begin{equation}\label{def-C12}
\mathcal{C}_1:=\frac{(\partial_x\mathfrak{h})^2+(\partial_r\mathfrak{h})^2+\Lambda^2\rho^2}{r^2}(x,g_D(x)),\quad\mathcal{C}_2:=\Lambda^2 \rho^2(x,g_D(x)).\end{equation}
Since we have
\begin{equation}\label{S-cont}
S=S_{\rm en}\left(\frac 12\right),\quad\Lambda=\Lambda_{\rm en}\left(\frac 12\right),\quad p=p_0\quad\mbox{on}\quad\Gamma_{\rm cd}^{g_D},
\end{equation}
we obtain that
\begin{equation}\label{rho-cont}
\rho=\left(\frac{p_0}{S_{\rm en}(\frac 12)}\right)^{1/\gamma},
\end{equation}
from which it follows that $\mathcal{C}_2$ in \eqref{def-C12} is given by
\begin{equation*}
\mathcal{C}_2=\Lambda_{\rm en}^2(\frac 12)p_0^{2/\gamma}S_{\rm en}^{-2/\gamma}(\frac 12).
\end{equation*}
A direct computation with using \eqref{Ber-inv}, \eqref{3D-st-f}, and \eqref{S-cont}-\eqref{rho-cont} yields that
\begin{equation*}
\begin{split}
\mathcal{C}_1
&=2\left(B_0^--\frac{\gamma}{\gamma-1}p_0^{1-1/\gamma}S_{\rm en}^{1/\gamma}(\frac 12)\right)p_0^{2/\gamma}S_{\rm en}^{-2/\gamma}(\frac 12).
\end{split}
\end{equation*}
By differentiating the equation \eqref{3D-fxr2} in the tangential direction along $\Gamma_{\rm cd}^{g_D}$,
we have
\begin{equation*}
(\partial_x\mathfrak{h})\left(\partial_{xx}\mathfrak{h}+g_D'(x)\partial_{xr}\mathfrak{h}\right)+(\partial_r\mathfrak{h})\left(\partial_{rx}\mathfrak{h}+g_D'(x)\partial_{rr}\mathfrak{h}\right)=\mathcal{C}_1g_D(x)g_D'(x)\quad\mbox{on}\quad\Gamma_{\rm cd}^{g_D}.
\end{equation*}
And, we solve this expression for $\partial_{xr}\mathfrak{h}$ to get
\begin{equation}\label{3D-FXR}
\partial_{xr}\mathfrak{h}
=-\left(\frac{\mathcal{C}_1g_D(x)}{\partial_r\mathfrak{h}}+\partial_{xx}\mathfrak{h}-\partial_{rr}\mathfrak{h}\right)\frac{\omega}{(\partial_x\mathfrak{h})g_D'(x)+(\partial_r\mathfrak{h})}\quad\mbox{on}\quad\Gamma_{\rm cd}^{g_D}.
\end{equation}
Substituting the expression of $\mathcal{C}_1$ in \eqref{def-C12} into \eqref{3D-FXR}, we have
\begin{equation}\label{q22term}
\partial_r\omega=\partial_{xr}\mathfrak{h}
=\left(\frac{-\partial_{xx}\mathfrak{h}+g_D(x)\mathfrak{E}}{(\partial_x\mathfrak{h})g_D'(x)+(\partial_r\mathfrak{h})}\right)\omega\quad\mbox{on}\quad\Gamma_{\rm cd}^{g_D}
\end{equation}
for
\begin{equation*}
\mathfrak{E}:=\frac{1}{\partial_r\mathfrak{h}}\left\{-\left(\frac{\partial_x\mathfrak{h}}{r}\right)^2-\left(\frac{\Lambda\rho}{r}\right)^2\right\}+\partial_r\left(\frac{\partial_r\mathfrak{h}}{r}\right).
\end{equation*}
By the definition of $\mathfrak{q}_{21}$ in \eqref{def-q12}, we also have
\begin{equation}\label{q21term}
\mathfrak{q}_{21}=\frac{(\partial_x\mathfrak{h})(\partial_r\mathfrak{h})}{\mathfrak{O}}=\frac{(\partial_r\mathfrak{h})\omega}{\mathfrak{O}}.
\end{equation}
Finally, a direct computation with using \eqref{q22term}-\eqref{q21term} yields the following conormal boundary condition for \eqref{3D-St-Diff} on $\Gamma_{\rm cd}^{g_D}$:
\begin{equation}\label{3D-omega-BC}
\left(\frac{\mathfrak{q}_{1j}}{r\rho^2}\partial_j\omega,\frac{\mathfrak{q}_{2j}}{r\rho^2}\partial_j\omega\right)\cdot{\bf n}_{g_D}=\widetilde{\mu}\omega\quad\mbox{on}\quad\Gamma_{\rm cd}^{g_D}
\end{equation}
for $\widetilde{\mu}$ defined by
\begin{equation*}\label{3D-omega-GammaD}
\begin{split}
\widetilde{\mu}:=&\frac{\mathfrak{q}_{11}\partial_{xx}\mathfrak{h}+\mathfrak{q}_{12}\partial_{xr}\mathfrak{h}}{r\rho^2(\partial_r\mathfrak{h})\sqrt{1+|g_D'|^2}}+\frac{1}{r\rho^2\sqrt{1+|g_D'|^2}}\left(\frac{(\partial_r\mathfrak{h})(\partial_{xx}\mathfrak{h})}{\mathfrak{O}}\right)\\
&+\frac{\mathfrak{q}_{22}}{r\rho^2\sqrt{1+|g_D'|^2}}
\left(\frac{-\partial_{xx}\mathfrak{h}+g_D\mathfrak{E}}{(\partial_x\mathfrak{h})g_D'+(\partial_r\mathfrak{h})}\right),\\
\end{split}
\end{equation*}
where we represent ${\bf n}_{g_D}$ as
\begin{equation*}
{\bf n}_{g_D}=\frac{1}{\sqrt{1+|g_D'(x)|^2}}\left(\frac{\omega}{\partial_r\mathfrak{h}},1\right).
\end{equation*}

Fix a constant $L>0$ and let $\eta$ be a $C^{\infty}$ function satisfying
$$\eta=1\quad\mbox{for}\quad |x|<L,\quad \eta=0\quad\mbox{for}\quad |x|>L+1,\quad\mbox{and}\quad|\eta'(x)|\le 2.$$
Multiply \eqref{3D-St-Diff} by $\eta^2\omega$ and integrate over the domain $\Omega_{g_D}^-$ to get
\begin{equation}\label{3D-sum-omega}
\iint_{\Omega_{g_D}^-}\frac{\eta^2|\nabla\omega|^2}{r\rho} drdx=\sum_{i=1}^6 I_i+\sum_{i=1}^2 B_i
\end{equation}
for
\begin{equation*}
\begin{split}
&I_1:=-\iint_{\Omega_{g_D}^-}\frac{|\nabla\mathfrak{h}\cdot\nabla\omega|^2\eta^2}{r\rho^2\mathfrak{O}}drdx,\\
&I_2:=-2\iint_{\Omega_{g_D}^-}\left(\frac{\mathfrak{q}_{ij}}{r\rho^2}\partial_j\omega\right)\eta(\partial_i\eta)\omega drdx,\\
&I_3:=2\iint_{\Omega_{g_D}^-}\frac{1}{\mathfrak{O}}\left(r\mathfrak{S}'\rho^{\gamma-1}+\frac{\Lambda\Lambda'}{r}\right)(\nabla\mathfrak{h}\cdot\nabla\eta)\eta\omega^2drdx,\\
&I_4:=-2\iint_{\Omega_{g_D}^-}\frac{1}{\mathfrak{O}}\left(r\mathfrak{S}'\rho^{\gamma-1}+\frac{\Lambda\Lambda'}{r}\right)(\partial_i\mathfrak{h})(\partial_i\omega)\eta^2\omega drdx,\\
&I_5:=\iint_{\Omega_{g_D}^-}\left(\frac{r}{\gamma}\mathfrak{S}''\rho^{\gamma}+\frac{(\Lambda')^2\rho}{r}+\frac{\Lambda\Lambda''\rho}{r}\right)\eta^2\omega^2 drdx,\\
&I_6:=-\iint_{\Omega_{g_D}^-}\frac{(r^2\mathfrak{S}'\rho^{\gamma+1}+\Lambda\Lambda'\rho^2)^2}{r\rho^2\mathfrak{O}}\eta^2\omega^2 drdx,\\
&B_1:=\int_{\Gamma_{\rm cd}^{g_D}\cup\Gamma_{\rm en}^{g_D}}\left(\frac{\mathfrak{q}_{ij}}{r\rho^2}\partial_j\omega\right)\eta^2\omega\cdot{\bf n}_{\rm out}ds,\\
&B_2:=-\int_{\Gamma_{\rm cd}^{g_D}\cup\Gamma_{\rm en}^{g_D}}\frac{\partial_i\mathfrak{h}}{\mathfrak{O}}\left(r\mathfrak{S}'\rho^{\gamma-1}+\frac{\Lambda\Lambda'}{r}\right)\eta^2\omega^2 \cdot{\bf n}_{\rm out}ds.
\end{split}
\end{equation*}
We will show that
\begin{equation}\label{3D-omega-claim}
\left\{\begin{split}
&I_1+I_4+I_6\le 0,\\
&|I_2|\le C\int_L^{L+1}\int_0^{g_D(x)}\left(1+\frac{1}{r^2}\right)|\nabla\omega|^2drdx,\\
&|I_k|\le C\sigma\int_L^{L+1}\int_0^{g_D(x)}\frac{|\nabla\omega|^2}{r}drdx\quad\mbox{for}\quad k=3,5,\\
&|B_1|\le C{\sigma}\int_0^{L+1}\int_0^{g_D(x)}{\frac{|\nabla\omega|^2}{r}}drdx+E,\\
&|B_2|\le C\sigma\int_0^{L+1}\int_0^{g_D(x)}{|\nabla\omega|^2}drdx+E,
\end{split}\right.
\end{equation}
where $E\ge0$ and $C>0$ are constants depending only on the data and $\alpha$.
From now on, the constant $C$ depends only on the data and $\alpha$, which may vary from line to line.

First, by the H\"older's inequality, we have
\begin{equation*}
\begin{split}
I_4&\le 2\left(\iint_{\Omega_{g_D}^-}\frac{|\nabla\mathfrak{h}\cdot\nabla\omega|^2\eta^2}{r\rho^2\mathfrak{O}}drdx\right)^{1/2}
\left(\iint_{\Omega_{g_D}^-}\frac{\left(r^2\mathfrak{S}'\rho^{\gamma+1}+{\Lambda\Lambda'\rho^2}\right)^2}{r\rho^2\mathfrak{O}}\eta^2\omega^2drdx\right)^{1/2}\\
&=2\sqrt{|I_1||I_6|},
\end{split}
\end{equation*}
from which we obtain that
$$I_1+I_4+I_6\le -|I_1|+2\sqrt{|I_1||I_6|}-|I_6|\le 0.$$

Before we prove the remaining estimates in \eqref{3D-omega-claim}, we compute estimates for $(\rho, \mathfrak{O}, \mathfrak{S}',\mathfrak{S}'',\Lambda',\Lambda'')$.
By a straightforward computations with using the estimate \eqref{Thm2.1-uniq-est} given in Theorem \ref{3D-MainThm}(a), one can easily check that there exists a constant $\sigma_{\star}\in(0,\sigma_1]$ depending only on the data and $\alpha$ so that if $\sigma\le \sigma_{\star}$, then we have
\begin{equation}\label{3D-far-Lem}
|\rho-\rho_0^-|\le \frac{\rho_0^-}{2}\quad\mbox{and}\quad|\mathfrak{V}_0-\mathfrak{V}|\le \frac{\mathfrak{V}_0}{2}\quad\mbox{in}\quad\overline{\Omega_{g_D}^-}
\end{equation}
for
\begin{equation*}
\mathfrak{V}_0:=c_0^2-u_0^2=\frac{\gamma p_0}{\rho_0^-}-u_0^2,\quad \mathfrak{V}:=c^2-|{\bf u}|^2.
\end{equation*}
By \eqref{3D-far-Lem}, it holds that
\begin{equation}\label{3D-O-lower}
\begin{split}
\mathfrak{O}&=r^2(\gamma+1)\mathfrak{S}\rho^{\gamma}-2r^2B_0^-\rho+\Lambda^2\rho
={r^2}{\rho}\left(\frac{\gamma p}{\rho}-|{\bf u}|^2\right)+\Lambda^2\rho\\
&=r^2\rho\left(c^2-|{\bf u}|^2+\left(\frac{\Lambda}{r}\right)^2\right)
=r^2\rho\left(\mathfrak{V}+\left(\frac{\Lambda}{r}\right)^2\right)
\ge \frac{r^2\rho_0^- \mathfrak{V}_0}{4}.
\end{split}
\end{equation}
By using the equations in \eqref{3D-ang} and the definition of $\mathfrak{h}$, it can be checked that
\begin{equation}\label{rg-rel}
\int_0^{\mathcal{G}^{-1}(\mathfrak{h}(x,r))}s\rho u_x(0,s)ds
=\int_0^r s\rho u_x(x,s)ds \quad\text{in $\Omega_{g_D}^-$},
\end{equation}
where $\mathcal{G}$ is given in \eqref{S-Lambda}.
One can also check that there exists a constant $\sigma_{\star\star}\in(0,\sigma_{\star}]$ depending only on the data and $\alpha$ so that if $\sigma\le\sigma_{\star\star}$, then
\begin{equation}\label{rhou-G}
|\rho u_x-\rho_0^-u_0|\le \frac{\rho_0^-u_0}{2},
\end{equation}
and it follows from \eqref{rg-rel}-\eqref{rhou-G} that
\begin{equation*}\label{G-inv-est}
0< \frac{1}{\sqrt{3}}\le \frac{\mathcal{G}^{-1}(\mathfrak{h}(x,r))}{r}\le \sqrt{3}\quad \text{in $\Omega_{g_D}^-$},
\end{equation*}
then we get
 \begin{equation}\label{S-Lam-est}
 \begin{split}
&|\mathfrak{S}'(\mathfrak{h})|\le \frac{C\sigma}{r},\quad|\mathfrak{S}''(\mathfrak{h})|\le  \frac{C{\sigma}}{r^3},\\
&|\Lambda'(\mathfrak{h})|\le C\sigma,\quad |\Lambda''(\mathfrak{h})|\le\frac{C{\sigma}}{r^2}
\end{split}
\end{equation}
in $\Omega_{g_D}^-$.

Now we are ready to estimate for $I_2$.
Since $\frac{\omega(\partial_j\omega)}{r}\le C\left(\omega^2+\frac{|\nabla\omega|^2}{r^2}\right)$ and $\rho\ge\frac{\rho_0^-}{2}$ in $\Omega_{g_D}^-$, we have
\begin{equation}\label{3D-I2-est}
|I_2|\le C\int_L^{L+1}\int_0^{g_D(x)}\left(\omega^2+\frac{|\nabla\omega|^2}{r^2}\right)drdx.
\end{equation}
By the boundary condition $\omega\equiv 0$ on $\{r=0\}$ stated in \eqref{omega-BC-0}, we have
\begin{equation*}\label{3D-omega-Poin}
\omega(x,t)=\int_0^{t}\partial_r\omega(x,r)dr\quad\mbox{for}\quad (x,t)\in\overline{\Omega_{g_D}^-}.
\end{equation*}
By the H\"older inequality, we have the following estimates:
\begin{equation}\label{3D-omega22}
\left\{\begin{split}
&\omega^2(x,t)\le  C t^2\int^{t}_0\frac{(\partial_r\omega)^2(x,r)}{r}dr,\\
&\omega^2(x,t)\le  t\int^t_0(\partial_r\omega)^2(x,r) dr\le C\int_0^{g_D(x)}|\nabla\omega|^2 dr\quad\mbox{for }(x,t)\in\overline{\Omega_{g_D}^-}.
\end{split}\right.
\end{equation}
Substituting the second estimate of \eqref{3D-omega22} into \eqref{3D-I2-est} yields
$$|I_2|\le C\int_L^{L+1}\int_0^{g_D(x)}\left(1+\frac{1}{r^2}\right)|\nabla\omega|^2drdx.$$

It follows from \eqref{3D-far-Lem}-\eqref{S-Lam-est} that
\begin{align}
\label{I3-est}|I_3|&\le C\sigma\int_L^{L+1}\int_0^{g_D(x)}\frac{\omega^2}{r^2}drdx,\\
\label{B2-est}|B_2|&\le C\sigma\int_{0}^{L+1}\omega^2(x,g_D(x))dx+E_2,
\end{align}
where the constant $E_2\ge 0$ depends only on the data and $\alpha$.
Substituting the first estimate of \eqref{3D-omega22} into \eqref{I3-est} gives
\begin{equation*}
|I_3|\le C\sigma\int_L^{L+1}\int_0^{g_D(x)}\frac{|\nabla\omega|^2}{r} drdx.
\end{equation*}
Similarly, substituting the second estimate of \eqref{3D-omega22} into \eqref{B2-est} gives
\begin{equation*}
|B_2|\le C\sigma\int_0^{L+1}\int_0^{g_D(x)}|\nabla\omega|^2drdx+E_2.
\end{equation*}
It follows from  \eqref{3D-omega-BC} and  \eqref{3D-far-Lem}-\eqref{S-Lam-est} that
\begin{equation}\label{3D_B1}
\begin{split}
|B_1|\le\int_{0}^{L+1}\widetilde{\mu}\omega^2(x,g_D(x))dx+E_1
\le C\sigma\int_{0}^{L+1}\omega^2(x,g_D(x))dx+E_1,
\end{split}
\end{equation}
where the constant $E_1\ge0$ depends only on the data and $\alpha$.
Substituting the first estimate of \eqref{3D-omega22} into \eqref{3D_B1} gives
\begin{equation*}
|B_1|\le C{\sigma}\int_0^{L+1}\int_0^{g_D(x)}{\frac{|\nabla\omega|^2}{r}}drdx+E_1.
\end{equation*}
Also, we obtain from \eqref{3D-far-Lem}, \eqref{S-Lam-est}, and the first estimate of \eqref{3D-omega22} that
$$|I_5|\le C{\sigma}\int_0^{L+1}\int_0^{g_D(x)}\frac{\omega^2}{r^2}drdx\le C{\sigma}\int_0^{L+1}\int_0^{g_D(x)}\frac{|\nabla\omega|^2}{r}drdx.$$

Now the estimates in \eqref{3D-omega-claim} are all verified.

From \eqref{3D-sum-omega}-\eqref{3D-omega-claim}, we have
\begin{equation*}
\begin{split}
\int_0^L&\int_0^{g_D(x)}\frac{|\nabla\omega|^2}{r} drdx\\
\le& C^{(\sharp)}{\sigma}\int_0^L\int_0^{g_D(x)}\frac{|\nabla\omega|^2}{r} drdx+C\int_L^{L+1}\int_0^{g_D(x)}\left(1+\frac{1}{r^2}\right)|\nabla\omega|^2drdx +E,
\end{split}
\end{equation*}
where the constants $C^{(\sharp)}>0$ and $E\ge0$ depend only on the data and $\alpha$.
If it holds that
$${\sigma}\le\frac{1}{2C^{(\sharp)}}, $$
then we obtain from the previous estimate that
\begin{equation*}\label{3D-urr}
\begin{split}
 \int_0^L\int_0^{g_D(x)}\frac{|\nabla\omega|^2}{r} drdx
&\le C\int_L^{L+1}\int_0^{g_D(x)}\left(1+\frac{1}{r^2}\right)|\nabla\omega|^2drdx+CE.
\end{split}
\end{equation*}
Since $|\nabla\omega|\le C$ and $\frac{|\nabla\omega|^2}{r^2}\le C$ in $\overline{\Omega_{g_D}^-}$ by \eqref{Thm2.1-uniq-est}, we have
\begin{equation*}
 \int_0^L\int_0^{g_D(x)}\frac{|\nabla\omega|^2}{r} drdx\le C.
\end{equation*}
Since $0<g_D(x)<1$, we have
\begin{equation*}
\int_0^L\int_0^{g_D(x)}|\nabla\omega|^2drdx\le \int_0^L\int_0^{g_D(x)}\frac{|\nabla\omega|^2}{r} drdx\le C
\end{equation*}
for some constant $C>0$ independent of $L$.
Passing to the limit $L\rightarrow\infty$ yields
\begin{equation*}
\int_0^\infty\int_0^{g_D(x)}|\nabla\omega|^2drdx\le C.
\end{equation*}
Hence
$$\int_L^{L+1}\int_0^{g_D(x)}|\nabla\omega|^2drdx\rightarrow0\quad\mbox{as}\quad L\rightarrow\infty.$$
Since $\omega\in C^{1,\alpha}(\overline{\mathcal{N}^-_{g_D}})$, we have
\begin{equation}\label{3D-omega-lim}
\|\nabla\omega(x,\cdot)\|_{C^0(\overline{\Omega_{g_D}^-\cap\{x> L\}})}\rightarrow 0\quad\mbox{as}\quad L\rightarrow\infty.
\end{equation}
By \eqref{3D-omega-lim} and the compatibility condition $\omega\equiv 0$ on $\{r=0\}$, we have
\begin{equation}\label{omega0}
\|\omega(x,\cdot)\|_{C^0(\overline{\Omega_{g_D}^-\cap\{x> L\}})}\rightarrow 0\quad\mbox{as}\quad L\rightarrow\infty.
\end{equation}
Since $\rho>\rho_0^-/2$ in $\Omega_{g_D}^-$ and $\omega=\partial_x\mathfrak{h}=-r\rho u_r$, \eqref{omega0} implies that
\begin{equation}\label{ur0}
\|ru_r(x,\cdot)\|_{C^0(\overline{\Omega_{g_D}^-\cap\{x> L\}})}\rightarrow 0\quad\mbox{as}\quad L\rightarrow\infty.
\end{equation}
By \eqref{3D-omega-lim} and \eqref{ur0}, we have
\begin{equation*}
\| ru_r(x,\cdot)\|_{C^1(\overline{\Omega_{g_D}^-\cap\{x> L\}})}\rightarrow 0\quad\mbox{as}\quad L\rightarrow\infty,
\end{equation*}
from which
\begin{eqnarray}
&\nonumber&\|g_D'(x)\|_{C^1({\{x\ge L\}})}\rightarrow 0,\\
&\label{nabla-ur}&\|u_r(x,\cdot)\|_{C^1(\overline{\mathcal{N}_{g_D}^-\cap\{x> L\}})}\rightarrow 0\quad\mbox{as}\quad L\rightarrow\infty.
\end{eqnarray}
It follows from the equation in \eqref{ES-Far-eq} and \eqref{nabla-ur} that
\begin{equation*}
\|\partial_r p(x,\cdot)-\frac{\rho u_{\theta}^2}{r}(x,\cdot)\|_{C^0(\overline{\mathcal{N}_{g_D}^-\cap\{x> L\}})}\rightarrow 0\quad\mbox{as}\quad L\rightarrow\infty.
\end{equation*}
The proof of Theorem \ref{3D-MainThm}(b) is completed by choosing $\sigma_2$ as
\begin{equation*}
\sigma_2=\min\left\{\sigma_1,\sigma_{\star\star}, \frac{1}{2C^{(\sharp)}}\right\}.
\end{equation*}
\qed


\vspace{.25in}
\noindent
{\bf Acknowledgements:}
The research of Myoungjean Bae was supported in part by  Samsung Science and Technology Foundation
under Project Number SSTF-BA1502-02.
The research of Hyangdong Park was supported in part by  Samsung Science and Technology Foundation
under Project Number SSTF-BA1502-02.

\smallskip


\begin{thebibliography}{10}
\providecommand{\url}[1]{{#1}}
\providecommand{\urlprefix}{URL }
\expandafter\ifx\csname urlstyle\endcsname\relax
  \providecommand{\doi}[1]{DOI~\discretionary{}{}{}#1}\else
  \providecommand{\doi}{DOI~\discretionary{}{}{}\begingroup
  \urlstyle{rm}\Url}\fi

\bibitem{bae2013stability}
Bae, M.: Stability of contact discontinuity for steady Euler system in infinite
  duct.
\newblock Zeitschrift f{\"u}r angewandte Mathematik und Physik \textbf{64}(4),
  917--936 (2013)

\bibitem{bae2014subsonic}
Bae, M., Duan, B., Xie, C.: Subsonic solutions for steady Euler--Poisson system
  in two-dimensional nozzles.
\newblock SIAM Journal on Mathematical Analysis \textbf{46}(5), 3455--3480
  (2014)

\bibitem{bae2018contact}
Bae, M., Park, H.: Contact discontinuities for 2-d inviscid compressible flows
  in infinitely long nozzles.
\newblock SIAM Journal on Mathematical Analysis, in press; preprint available at arXiv:1810.04411 (2018)

\bibitem{bae20183}
Bae, M., Weng, S.: 3-d axisymmetric subsonic flows with nonzero swirl for the
  compressible Euler--Poisson system.
\newblock In: Annales de l'Institut Henri Poincare (C) Non Linear Analysis,
  vol.~35, pp. 161--186. Elsevier (2018)

\bibitem{chen2016subsonic}
Chen, C.: Subsonic non-isentropic ideal gas with large vorticity in nozzles.
\newblock Mathematical Methods in the Applied Sciences \textbf{39}(10),
  2529--2548 (2016)

\bibitem{chen2012global}
Chen, G.Q., Deng, X., Xiang, W.: Global steady subsonic flows through
  infinitely long nozzles for the full Euler equations.
\newblock SIAM Journal on Mathematical Analysis \textbf{44}(4), 2888--2919
  (2012)

\bibitem{chen2017steady}
Chen, G.Q., Huang, F.M., Wang, T.Y., Xiang, W.: Steady Euler flows with large
  vorticity and characteristic discontinuities in arbitrary infinitely long
  nozzles.
\newblock  Adv. Math. \textbf{346}, 946--1008 (2019)

\bibitem{chen2013stability}
Chen, G.Q., Kukreja, V., Yuan, H.: Stability of transonic characteristic
  discontinuities in two-dimensional steady compressible Euler flows.
\newblock Journal of Mathematical Physics \textbf{54}(2), 021506 (2013)

\bibitem{chen2013well}
Chen, G.Q., Kukreja, V., Yuan, H.: Well-posedness of transonic characteristic
  discontinuities in two-dimensional steady compressible Euler flows.
\newblock Zeitschrift f{\"u}r angewandte Mathematik und Physik \textbf{64}(6),
  1711--1727 (2013)

\bibitem{chen2006stability}
Chen, S.: Stability of a mach configuration.
\newblock Communications on Pure and Applied Mathematics: A Journal Issued by
  the Courant Institute of Mathematical Sciences \textbf{59}(1), 1--35 (2006)

\bibitem{chen2008mach}
Chen, S.: Mach configuration in pseudo-stationary compressible flow.
\newblock Journal of the American Mathematical Society \textbf{21}(1), 63--100
  (2008)

\bibitem{chen2008stability}
Chen, S., Fang, B.: Stability of reflection and refraction of shocks on
  interface.
\newblock Journal of Differential Equations \textbf{244}(8), 1946--1984 (2008)

\bibitem{courant1999supersonic}
Courant, R., Friedrichs, K.O.: Supersonic flow and shock waves, vol.~21.
\newblock Springer Science \& Business Media (1999)

\bibitem{du2011global}
Du, L., Duan, B.: Global subsonic Euler flows in an infinitely long
  axisymmetric nozzle.
\newblock Journal of Differential Equations \textbf{250}(2), 813--847 (2011)

\bibitem{du2014steady}
Du, L., Xie, C., Xin, Z.: Steady subsonic ideal flows through an infinitely
  long nozzle with large vorticity.
\newblock Communications in Mathematical Physics \textbf{328}(1), 327--354
  (2014)

\bibitem{du2011subsonic}
Du, L., Xin, Z., Yan, W.: Subsonic flows in a multi-dimensional nozzle.
\newblock Archive for rational mechanics and analysis \textbf{201}(3),
  965--1012 (2011)

\bibitem{duan2013three}
Duan, B., Luo, Z.: Three-dimensional full Euler flows in axisymmetric nozzles.
\newblock Journal of Differential Equations \textbf{254}(7), 2705--2731 (2013)

\bibitem{duan2015subsonic}
Duan, B., Luo, Z.: Subsonic non-isentropic Euler flows with large vorticity in
  axisymmetric nozzles.
\newblock Journal of Mathematical Analysis and Applications \textbf{430}(2),
  1037--1057 (2015)

\bibitem{han2011elliptic}
Han, Q., Lin, F.: Elliptic partial differential equations, vol.~1.
\newblock American Mathematical Soc. (2011)

\bibitem{wang2015structural}
Wang, Y.G., Yu, F.: Structural stability of supersonic contact discontinuities
  in three-dimensional compressible steady flows.
\newblock SIAM Journal on Mathematical Analysis \textbf{47}(2), 1291--1329
  (2015)

\bibitem{xie2007global}
Xie, C., Xin, Z.: Global subsonic and subsonic-sonic flows through infinitely
  long nozzles.
\newblock Indiana University Mathematics Journal pp. 2991--3023 (2007)

\bibitem{xie2010existence}
Xie, C., Xin, Z.: Existence of global steady subsonic Euler flows through
  infinitely long nozzles.
\newblock SIAM Journal on Mathematical Analysis \textbf{42}(2), 751--784 (2010)

\bibitem{xie2010global}
Xie, C., Xin, Z.: Global subsonic and subsonic-sonic flows through infinitely
  long axially symmetric nozzles.
\newblock Journal of Differential Equations \textbf{248}(11), 2657--2683 (2010)

\end{thebibliography}
\end{document}